\numberwithin{equation}{section}
\newtheorem{thm}{Theorem}[section]
\newtheorem{prop}[thm]{Proposition}
\newtheorem{lem}[thm]{Lemma}
\newtheorem{cor}[thm]{Corollary}
\newtheorem*{conj*}{Conjecture}
\newtheorem*{lem*}{Lemma}
\newtheorem*{recipe*}{Recipe}
\theoremstyle{remark}
\newtheorem{rem}[thm]{Remark}
\newtheorem{defn}[thm]{Definition}
\newtheorem*{rem*}{Remark}
\newtheorem*{defn*}{Definition}
\newtheorem*{ex*}{Example}
\newcommand{\R}{\mathbb{R}}
\newcommand{\C}{\mathbb{C}}
\newcommand{\Q}{\mathbb{Q}}
\newcommand{\Z}{\mathbb{Z}}
\newcommand{\calA}{\mathcal{A}}
\newcommand{\calB}{\mathcal{B}}
\newcommand{\calC}{\mathcal{C}}
\newcommand{\calD}{\mathcal{D}}
\newcommand{\calE}{\mathcal{E}}
\newcommand{\calF}{\mathcal{F}}
\newcommand{\calP}{\mathcal{P}}
\newcommand{\calR}{\mathcal{R}}
\newcommand{\calS}{\mathcal{S}}
\newcommand{\calT}{\mathcal{T}}
\newcommand{\calU}{\mathcal{U}}
\newcommand{\calV}{\mathcal{V}}
\newcommand{\calX}{\mathcal{X}}
\newcommand{\calY}{\mathcal{Y}}
\newcommand{\power}{\mathfrak{p}}
\newcommand{\complete}{\mathfrak{h}}
\newcommand{\schur}{\mathfrak{s}}
\newcommand{\elementary}{\mathfrak{e}}
\newcommand{\monomial}{\mathfrak{m}}
\DeclareMathOperator{\sort}{sort}
\newcommand{\sorteq}{\overset{\sort}{=}}
\newcommand{\defeq}{\overset{\triangle}{=}}
\DeclareMathOperator{\height}{ht}
\DeclareMathOperator{\Sym}{Sym}
\DeclareMathOperator{\error}{error}
\DeclareMathOperator{\main}{main}
\DeclareMathOperator{\LHS}{LHS}
\DeclareMathOperator{\RHS}{RHS}
\DeclareMathOperator{\abs}{abs}
\DeclareMathOperator{\ribbon}{ribbon}
\DeclareMathOperator{\imaginary}{i}
\begin{document}

\title[Mixed Ratios of Characteristic Polynomials]{A Combinatorial Approach to Mixed Ratios of Characteristic Polynomials} 
\author{Helen Riedtmann}
\thanks{This research was partially supported by the Forschungskredit of the University of Zurich, grant no. FK-15-089}

\subjclass[2010]{Primary 05E05, 11M50, 15B52}

\begin{abstract} We provide a combinatorial derivation of an asymptotic formula for averages of mixed ratios of characteristic polynomials over the unitary group, where mixed ratios are products of ratios and/or logarithmic derivatives. Our proof of this formula is a generalization of Bump and Gamburd's elegant combinatorial proof of Conrey, Forrester and Snaith's formula for averages of ratios of characteristic polynomials over the unitary group. One application of this formula is an asymptotic expression for sums over zeros of a random characteristic polynomial from the unitary group, which we call an explicit formula for eigenvalues in an analogy to what is called an explicit formula in the context of $L$-functions.
\end{abstract}

\maketitle

\section{Introduction}
We present a combinatorial method to derive formulas for averages of products of ratios and/or logarithmic derivatives of characteristic polynomials over the unitary group. More concretely, we give combinatorial expressions for integrals of the following type: let $\calA$, $\calB, \dots, \calF$ be finite sets of complex variables, then
\begin{multline*}
\text{mixed ratio}(\calA, \calB, \calC, \calD, \calE, \calF) \\ = \int_{U(N)} \frac{\prod_{\alpha \in \calA} \chi_g(\alpha) \prod_{\beta \in \calB} \chi_{g^{-1}}(\beta)}{\prod_{\delta \in \calD} \chi_g(\delta) \prod_{\gamma \in \calC} \chi_{g^{-1}} (\gamma)} \prod_{\varepsilon \in \calE} \frac{\chi'_g(\varepsilon)}{\chi_g(\varepsilon)} \prod_{\varphi \in \calF} \frac{\chi'_{g^{-1}}(\varphi)}{\chi_{g^{-1}}(\varphi)} dg
\end{multline*}
where $\chi_g$ denotes the characteristic polynomial of the unitary matrix $g \in U(N)$, and the integral over $U(N)$ is taken with respect to Haar measure (normalized so that the total volume is 1). The study of averages of characteristic polynomials of random matrices has attracted considerable attention in recent years. Our interest in averages of the type $\text{mixed ratio}(\calA, \calB, \calC, \calD, \calE, \calF)$ is motivated by conjectured connections with number theory discovered by Keating and Snaith \cite{KS00zeta}.

Our method produces results of the form
\begin{align*}
\text{mixed ratio } = \text{ combinatorial main term } + \text{ error term}.
\end{align*}
In fact, we are only able to provide a neat asymptotic bound for the error term of $\text{mixed ratio}(\calA, \calB, \calC, \calD, \calE, \calF)$ as $N \to \infty$ under the assumption that at least one of the sets of variables is empty. Setting some sets of variables equal to the empty set results in the four theorems discussed in Section~\ref{4_sec_results}: 
\begin{itemize}
\item Upon specializing our formula for mixed ratios to $\calE = \emptyset = \calF$, we essentially recover a formula for ratios due to 
Conrey, Forrester and Snaith \cite{original_ratios}.
\item If we prescribe $\calF = \emptyset$ or $\calA = \emptyset = \calD$, we obtain new expressions for the corresponding mixed ratios. 
\item Glossing over a few technical details, $\text{mixed ratio}(\emptyset, \emptyset, \emptyset, \emptyset, \calE, \calF)$ provides a compact combinatorial expression for the main term of the average of products of logarithmic derivatives. In \cite{CS}, Conrey and Snaith derive a different formula for averages of products of logarithmic derivatives, without using any combinatorial tools. By definition the two expressions for the main term are equal; however, ours is a sum of products of monomial symmetric polynomials, while theirs is a rather complicated ad-hoc expression. 
\end{itemize}

The following expression for products of logarithmic derivatives of \emph{completed} characteristic polynomials constitutes the principal application of our formula for products of logarithmic derivatives of classic characteristic polynomials (stated in Theorem~\ref{4_thm_log_ders}). Completed characteristic polynomials $\Lambda_g$ are introduced on page \pageref{4_defn_completed_char_pol}. Let $\calE$ and $\calF$ be sets of non-zero variables having absolute value strictly less than $1$, then
\begin{multline} \label{4_eq_in_intro} 
\int_{U(N)} \prod_{\varepsilon \in \calE} \varepsilon \frac{\Lambda'_g(\varepsilon)}{\Lambda_g(\varepsilon)} \prod_{\varphi \in \calF} \varphi \frac{\Lambda'_{g^{-1}}(\varphi)}{\Lambda_{g^{-1}}(\varphi)} dg = \\ 
\sum_\lambda \left( -\frac N2 \right)^{l(\calE) + l(\calF) - 2l(\lambda)}
z_\lambda \monomial_\lambda(\calE) \monomial_\lambda(\calF) + \error
.
\end{multline}
An asymptotic bound on the error term as $N \to \infty$ is given in Theorem~\ref{4_thm_completed_log_ders}. This equality allows us to derive what we call an explicit formula for the eigenvalues of unitary matrices. More precisely, let the function $z \mapsto f(z, z_2, \dots, z_n)$ and $h(z)$ ``behave well'' in a neighborhood of the unit circle and let $f(z_1, \dots, z_n)$ be symmetric, then Theorem~\ref{4_thm_explicit_formula} provides a combinatorial formula for the following expression: 
\begin{align*}
\int_{U(N)} \sum_{1 \leq j_1, \dots, j_n \leq N} h(\rho_{j_1}) \cdots h(\rho_{j_n}) f(\rho_{j_1}, \dots, \rho_{j_n}) dg
\end{align*}
where for every matrix $g \in U(N)$, $\calR(g) = \{\rho_1, \dots, \rho_N\}$ stands for the multiset of its eigenvalues. 

Our main focus lies on the analogy to explicit formulae for the zeros of so-called $L$-functions, which generalize the celebrated Riemann $\zeta$-function. Following \cite{CFKRS05}, we view characteristic polynomials as a model for $L$-functions. In consequence, we regard eigenvalues, which by definition are the zeros of characteristic polynomials, as a model for zeros of $L$-functions -- as conjectured in Montgomery's pioneering work \cite{montgomery73}. Not only is the question motivated by this analogy between characteristic polynomials and $L$-functions, but our \emph{proof} of Theorem~\ref{4_thm_explicit_formula} also mirrors the \emph{proof} of the explicit formula for $L$-functions given in \cite{rudnick1996}, thus making the conjectured connections between eigenvalues of random matrices and zeros of $L$-functions deeper. The principal difficulty is that the derivation of the explicit formula for $L$-functions is based on the fact that sufficiently far to the right of the critical line $L$-functions can be written as Euler products; however, there is no natural analogue for the Euler product in the characteristic polynomials for unitary matrices. In order to circumvent this obstacle, we need an alternative way to describe characteristic polynomials inside the unit circle. This is exactly what the equality in \eqref{4_eq_in_intro} provides. 

In addition to providing new formulas for averages of mixed ratios of characteristic polynomials over the unitary group (stated in Theorems~\ref{4_thm_log_ders_and_ratio_first} and \ref{4_thm_log_ders_and_ratio}), our framework covers averages of \emph{both ratios and products of logarithmic derivatives} of characteristic polynomials. Having a unified approach might be of relevance to number theoretic interpretations, given that products (\textit{i.e.}\ a particular type of ratio) have been used to make predictions about the values taken by $L$-functions, while products of logarithmic derivatives are connected to predictions about the zeros of $L$-functions. For an overview on random matrix theory predictions for $L$-function, we refer the interested reader to the introduction of the author's thesis \cite{thesis}.

\bigskip
Our method for computing $\text{mixed ratio}(\calA, \calB, \calC, \calD, \calE, \calF)$ is a generalization of Bump and Gamburd's combinatorial approach to a formula for averages of ratios of the form $\text{mixed ratio}(\calA, \calB, \calC, \calD, \emptyset, \emptyset)$ \cite{bump06}. As such it is based on the observation that the integrand is symmetric in both $\calR(g)$ and $\overline{\calR(g)}$, which implies that it can be written as a linear combination of the form
\begin{align*}
\sum_{\mu, \nu} \schur_\mu(\calR(g)) \overline{\schur_\nu(\calR(g))}
\end{align*}
where $\schur_\lambda$ is the Schur function associated to the partition $\lambda$ (which we define on page \pageref{4_defn_Schur}). We emphasize three new ingredients that make this generalization possible, namely the so-called first overlap identity, a new variant of the Murnaghan-Nakayama rule and an equality that is inspired by the vertex operator formalism. 
\begin{itemize}
\item We only need the simplest case of the first overlap identity, which is derived in \cite{overlap}. 
\item Our variant of the Murnaghan-Nakayama rule (stated in Proposition~\ref{4_prop_MN_negative_r}) provides an explicit expression for the following signed sum, under quite restrictive assumptions:
\begin{align*}  
\sum_{\substack{\lambda: \\ \mu \setminus \lambda \text{ is a $k$-ribbon}}} (-1)^{\height(\mu \setminus \lambda)} \schur_\lambda(\calX).
\end{align*}
Ribbons are defined at the very end of Section~\ref{4_sec_sequences_and_partitions}. 
\item The equality that is related to the vertex operator formalism (stated in Lemma~\ref{4_lem_reduction_operators}) describes the interaction between two ``power sum'' operators on the ring of symmetric functions.
\end{itemize}

\subsection{Structure of this paper}

In Section~\ref{4_sec_background_and_notation} we collect the combinatorial definitions and formulas that our results are based on. Section~\ref{4_sec_MN_rule} contains some extensions and variations of the Murnaghan-Nakayama rule, which are used to prove formulas for the average of mixed ratios of characteristic polynomials over the unitary matrices in Section~\ref{4_sec_ratios_and_log_der}. In Section~\ref{4_sec_explicit_formula} we first introduce
the notion of the completed characteristic polynomial of a unitary matrix, and then present an expression for the average of its logarithmic derivatives, which will allow us to derive an explicit formula for the eigenvalues of unitary matrices. We conclude by a brief explanation why an explicit formula of this type is of interest from a number theoretic perspective.

\section{Background and notation} \label{4_sec_background_and_notation}
Before presenting the required combinatorial background, let us fix some general notation: we
use the symbol $\defeq$ to denote an equality between the quantities on its left-hand side and its right-hand side which defines the quantity on its left-hand side. Furthermore,
$\LHS$/$\RHS$ always denotes the left-hand/right-hand side of the equality under consideration.

\subsection{Sequences and partitions} \label{4_sec_sequences_and_partitions}

For us a sequence is a \emph{finite} enumeration of elements, such as $\calX = \left(\calX_1, \dots, \calX_n \right)$. Its length is the number of its elements, denoted by $l(\calX) = n$. A subsequence $\calY$ of $\calX$ is a sequence given by $\calY_k = \calX_{n_k}$ where $1 \leq n_1 < n_2 < \dots \leq n$ is an increasing sequence of indices. If $K$ is a subsequence of $[n] = (1, \dots, n)$, then $\calX_K$ is shorthand for the subsequence of $\calX$ that corresponds to the indices in $K$. In consequence, any sequence of length $n$ contains exactly $2^n$ subsequences regardless of the number of repeated elements. If two sequences $\calX$ and $\calY$ of the same length are equal up to reordering their elements, we write $\calX \sorteq \calY$. The union of two sequences $\calX \cup \calY$ is obtained by appending $\calY$ to $\calX$; we sometimes add subscripts to indicate the lengths of the two sequences in question. The complement of a subsequence $\calY \subset \calX$ is the subsequence $\calX \setminus \calY$ of $\calX$ that satisfies $\calY \cup \left( \calX \setminus \calY \right) \sorteq \calX$. All operations on sequences that have not been mentioned are understood to be element wise. For instance, \label{symbol_abs} $\abs(\calX)$ is the sequence of absolute values of the elements of $\calX$. Moreover, we will write $\abs(\calX) < 1$ to indicate that all elements of the sequence $\calX$ are strictly less than 1 in absolute value. We do not denote the sequence of absolute values by $|\calX|$ (which would be more consistent with our usage of applying operations on sequences element by element) because vertical bars traditionally denote the size of a sequence or partition.

For sequences whose elements can be subtracted and multiplied, we define the following two functions: 
\begin{align*}
\Delta(\calX) = \prod_{1 \leq i < j \leq n} \left(\calX_i - \calX_j\right) \:\text{ and }\: \Delta(\calX; \calY) = \prod_{x \in \calX, y \in \calY} (x - y).
\end{align*}
We implicitly view all sets of variables as sequences but for simplicity of notation we will not fix the order of the variables explicitly. It is important, however, to stick to one order throughout a computation or within a formula.

A partition is a non-increasing sequence $\lambda = (\lambda_1, \dots, \lambda_n)$ of non-negative integers, called parts. If two partitions only differ by a sequence of zeros, we regard them as equal. By an abuse of notation, we say that the length of a partition is the length of the subsequence that consists of its positive parts. The size of a partition $\lambda$ is the sum of its parts, denoted $|\lambda|$. For any positive integer $i$, $m_i(\lambda)$ is the number of parts of $\lambda$ that are equal to $i$. It is sometimes convenient to use a notation for partitions that makes multiplicities explicit:
$$\lambda = \left\langle 1^{m_1(\lambda)} 2^{m_2(\lambda)} \dots i^{m_i(\lambda)} \dots \right\rangle.$$
The following statistic on the multiplicities will appear in some of our results:
$$z_\lambda = \prod_{i \geq 1} i^{m_i(\lambda)} m_i(\lambda)!$$

We will frequently view partitions as Ferrers diagrams. The Ferrers diagram associated to a partition $\lambda$ is defined as the set of points $(i,j) \in \Z \times \Z$ such that $1 \leq i \leq \lambda_j$; it is often convenient to visualize the points as square boxes. For instance, the Ferrers diagrams associated to partitions of the type $\langle m^n \rangle$ are just rectangles. The conjugate partition $\lambda'$ of $\lambda$ is given by the condition that the Ferrers diagram of $\lambda'$ is the transpose of the Ferrers diagram of $\lambda$. We note for later reference that if the union of two partitions $\mu$ and $\nu$ happens to be a partition, then $(\mu \cup \nu)' = \mu' + \nu'$. Given two partitions $\kappa$ and $\lambda$, we say that $\kappa$ is a subset of $\lambda$ if their Ferrers diagrams satisfy that containment relation. Note that $\kappa \subset \lambda$ is our shorthand for both subset and subsequence. It will be clear from the context whether we view $\kappa$ and $\lambda$ as sequences or diagrams. For a partition $\lambda$ that is contained in the rectangle $\langle m^n \rangle$, we call the partition
$$\tilde\lambda = (m - \lambda_n, \dots, m - \lambda_1) \subset \langle m^n \rangle$$
its $(m,n)$-complement.

If $\mu$ is a subset of $\lambda$, then the corresponding skew diagram is the set of boxes $\lambda \setminus \mu$ that are contained in $\lambda$ but not in $\mu$. A ribbon is a skew diagram that is connected and contains no $2 \times 2$ subset of boxes. What we call ribbon is also known as skew or rim hook \cite[p.~180]{Sagan}, and as border strip \cite[p.~5]{mac}. Let us illustrate this definition by some examples. The left-most diagram is a ribbon, while the other two both violate one of the conditions. 
\begin{center}
\begin{tikzpicture} 
\draw[step=0.5cm, thin] (0, 0) grid (0.5, 0.5);
\draw[step=0.5cm, thin] (0, 0.5) grid (0.5, 1);
\draw[step=0.5cm, thin] (0.5, 0.5) grid (1, 1);
\draw[step=0.5cm, thin] (0.5, 1) grid (1, 1.5);
\draw[step=0.5cm, thin] (0.999, 0.999) grid (1.5, 1.5);
\draw[step=0.5cm, thin] (0.999, 1.5) grid (1.5, 2);
\draw[step=0.5cm, thin] (1.5, 1.499) grid (2, 2);
\draw[step=0.5cm, thin] (2, 1.499) grid (2.5, 2);
\end{tikzpicture}
\hspace{1cm}
\begin{tikzpicture} 
\draw[step=0.5cm, thin] (0, 0) grid (0.5, 0.5);
\draw[step=0.5cm, thin] (0, 0.5) grid (0.5, 1);
\draw[step=0.5cm, thin] (0.5, 0.5) grid (1, 1);
\draw[step=0.5cm, thin] (0.5, 1) grid (1, 1.5);
\draw[step=0.5cm, thin] (0.999, 1.499) grid (1.5, 2);
\draw[step=0.5cm, thin] (1.5, 1.499) grid (2, 2);
\draw[step=0.5cm, thin] (2, 1.499) grid (2.5, 2);
\end{tikzpicture}
\hspace{1cm}
\begin{tikzpicture} 
\draw[step=0.5cm, thin] (0, 0) grid (0.5, 0.5);
\draw[step=0.5cm, thin] (0, 0.5) grid (0.5, 1);
\draw[step=0.5cm, thin] (0.5, 0.5) grid (1, 1);
\draw[step=0.5cm, thin] (1, 0.5) grid (1.5, 1);
\draw[step=0.5cm, thin] (0.999, 1) grid (1.5, 1.5);
\draw[step=0.5cm, thin] (0.999, 1.5) grid (1.5, 2);
\draw[step=0.5cm, thin] (1.5, 1.499) grid (2, 2);
\draw[step=0.5cm, thin] (2, 1.499) grid (2.5, 2);
\draw[step=0.5cm, thin] (0.5, 1) grid (1, 1.5);
\end{tikzpicture}
\end{center}
The diagram in the middle illustrates that we only consider \emph{edgewise} connected skew diagrams connected.

The size of a ribbon is the number of its boxes. We sometimes call a ribbon of size $k$ a $k$-ribbon. The height ($\height$) of a ribbon is one less than the number of its rows. We use the following shorthand for the property that $\lambda \setminus \mu$ is a $k$-ribbon: \label{symbol_is_k_ribbon} $$\mu \overset{k}{\to} \lambda.$$ We note for later reference that $\lambda \setminus \mu$ is a $k$-ribbon if and only if $\lambda' \setminus \mu'$ is. In that case,
\begin{align} \label{4_eq_height_conjugate_ribbon}
\height \left(\lambda' \setminus \mu'\right) = k - 1 - \height \left( \lambda \setminus \mu \right).
\end{align}
For sequences $\lambda^{(0)} \overset{k_1}{\to} \lambda^{(1)} \overset{k_2}{\to} \dots \overset{k_n}{\to} \lambda^{(n)}$, the symbol $\height \left( \lambda^{(n)} \setminus \lambda^{(0)} \right)$ denotes the sum of the heights of the intermediate ribbons.

\subsection{The ring of symmetric functions}
In this section we introduce the most commonly used symmetric polynomials. In addition, we will briefly discuss the more abstract concept of symmetric functions, which is necessary in order to define specializations and operators.

\begin{defn} [monomial symmetric polynomials] Let $\calX = (x_1, \dots, x_n)$ be a set of variables and let $\lambda$ be a partition. If $l(\lambda) > n$, then the monomial symmetric polynomial $\monomial_\lambda(\calX)$ is identically zero; otherwise, 
$$\monomial_\lambda(\calX) = \sum_{\substack{(\alpha_1, \dots, \alpha_n): \\ (\alpha_1, \dots, \alpha_n) \sorteq \lambda}} x_1^{\alpha_1} \cdots x_n^{\alpha_n}
.$$
We remark that this definition makes use of the convention that any partition of length less than $n$ may be viewed as a sequence of length exactly $n$ by appending zeros. 
\end{defn}

These polynomials are called symmetric because they are invariant under permutations of the elements of $\calX$. The following definition lists three other commonly used families of symmetric polynomials.

\begin{defn} (power sums, elementary and complete symmetric polynomials) Let $k$ be a positive integer and let $\calX$ be a set of variables. 
\begin{enumerate}
\item The $k$-th elementary symmetric polynomial $\elementary_k(\calX)$ is given by $\monomial_{\left\langle 1^k\right\rangle} (\calX)$, which is equal to the sum of all products of $k$ variables with distinct indices. We use the convention that $\elementary_0(\calX) = 1$.
\item The $k$-th complete symmetric polynomial $\complete_k(\calX)$ is equal to $\sum_{\lambda: |\lambda| = k} \monomial_\lambda(\calX)$. We use the convention that $\complete_0(\calX) = 1$.
\item The $k$-th power sum $\power_k(\calX)$ is defined by $\monomial_{(k)}(\calX) = \sum_{x \in \calX} x^k$.
\end{enumerate}
\end{defn}
We remark that for any set of variables $\calX$, the $l(\calX)$-th elementary polynomial $\elementary_{l(\calX)}(\calX)$ is simply the product of all variables. This observation motivates the following non-standard notation:
\begin{align*}
\elementary(\calX) = \prod_{x \in \calX} x
.
\end{align*} 

For theoretical considerations, it is often more convenient to work with symmetric functions instead of symmetric polynomials as they are not dependent on a set of variables. The monomial symmetric function corresponding to $\lambda$, which we denote by $\monomial_\lambda$, is determined by the condition that for any set of variables $\calX$, $\monomial_\lambda(\calX)$ is the monomial symmetric polynomial defined above. We will freely use this trick to get rid of the set of variables for all symmetric polynomials.

\begin{defn} [ring of symmetric functions] The ring of symmetric functions ($\Sym$) is the complex vector space spanned by the monomial symmetric functions $\monomial_\lambda$ where $\lambda$ runs over all partitions.
\end{defn}
Owing to the fact that the product of two symmetric polynomials is again symmetric, $\Sym$ is endowed with a natural ring structure. For a rigorous definition of the ring of symmetric functions consult \cite[p.~17-19]{mac}. It turns out that the monomial symmetric functions are not the only natural basis for $\Sym$. If we use the convention that for any partition $\lambda$, $$\power_\lambda = \prod_{i \geq 1} \power_i^{m_i(\lambda)},$$ then the $\power_\lambda$ also form a basis of the ring of symmetric functions \cite[p.~24]{mac}. In fact, the same holds for the elementary and complete symmetric functions \cite[p.~20 and 22]{mac}.

\subsubsection{Schur functions}
Arguably the most natural basis for $\Sym$ is given by the Schur functions. We will see that they are orthonormal with respect to the Hall inner product. Moreover, they are the main link between the theory of symmetric functions and representation theory. We follow \cite{mac} in our presentation of Schur functions.

\begin{defn} [Schur functions] \label{4_defn_Schur} Let $\calX$ be a set of $n$ pairwise distinct variables and $\lambda$ a partition. If $l(\lambda) > n$, then $\schur_\lambda(\calX) = 0$; otherwise, 
\begin{align*}
\schur_\lambda(\calX) = \frac{\det \left( x^{\lambda_j + n - j} \right)_{x \in \calX, 1 \leq j \leq n}}{\Delta(\calX)}
\end{align*}
where $\Delta(\calX)$ denotes the product of all pairwise differences of elements in $\calX$.
The fact that the polynomial $\Delta(\calX)$ is a divisor of the determinant in the numerator implies that $\schur_\lambda(\calX)$ is a homogeneous polynomial of degree $|\lambda|$, which allows us to extend this definition to all sets of variables of length $n$.
\end{defn}

Technically, this defines a symmetric polynomial - not a symmetric function. For historical reasons, we call both $\schur_\lambda(\calX)$ and $\schur_\lambda$ the Schur function indexed by the partition $\lambda$. There are various definitions for Schur functions, each emphasizing a different aspect. In fact, their combinatorial definition, for which we refer the interested reader to \cite{Sagan}, will also play a minor role.  

The Hall inner product on $\Sym$ is given by the condition that $\left\langle \complete_\lambda, \monomial_\mu \right\rangle = \delta_{\lambda \mu}$ for all partitions $\lambda, \mu$ where $\delta_{\lambda \mu}$ is the Kronecker delta. In order to state the main property of this inner product, we need to introduce the vector space $\Sym^k$, which is spanned by $\monomial_\lambda$ where $\lambda$ runs over all partitions $\lambda$ of size $k$. For each $k \geq 0$, let $u_\lambda$, $v_\lambda$ be bases of $\Sym^k$, indexed by partitions of size $k$. Then the following conditions are equivalent:
\begin{enumerate}
\item For all partitions $\lambda, \mu$, $\langle u_\lambda, v_\mu \rangle = \delta_{\lambda \mu}$.
\item For all sets of complex variables $\calX$, $\calY$ so that $|xy| < 1$ for all $x \in \calX$, $y \in \calY$, $\displaystyle \sum_\lambda u_\lambda(\calX) v_\lambda(\calY) = \prod_{\substack{x \in \calX \\ y \in \calY}} (1 - xy)^{-1}$.
\end{enumerate}

\begin{lem} [Cauchy identities] \label{4_lem_cauchy_identity_schur} Let $\calX$ and $\calY$ be two sets of variables with elements in $\C$ so that the product of any element in $\calX$ with any element in $\calY$ is strictly less than 1 in absolute value. The Cauchy identity states that
\begin{align} \label{4_eq_cauchy_id_schur}
\sum_{\lambda} \schur_\lambda(\calX) \schur_\lambda(\calY) ={} & \prod_{\substack{x \in \calX \\ y \in \calY}} (1 - xy)^{-1}
.
\intertext{Furthermore, what we call the power sum version of the Cauchy identity states that}
\label{4_eq_power_sum_version_cauchy_id}
\sum_{\lambda} \schur_\lambda (\calX) \schur_\lambda (\calY) ={} &\sum_\mu z_\mu^{-1} \power_\mu (\calX) \power_\mu (\calY)
\end{align}
where the three sums range over all partitions.
\end{lem}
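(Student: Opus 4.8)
The plan is to use the equivalence between condition (1) and condition (2) recorded just before the lemma as the main tool. Since the two assertions share the left-hand side $\sum_\lambda \schur_\lambda(\calX)\schur_\lambda(\calY)$, it is enough to prove \eqref{4_eq_cauchy_id_schur} and, separately, the purely formal identity
\begin{align*}
\prod_{\substack{x\in\calX\\ y\in\calY}}(1-xy)^{-1} = \sum_\mu z_\mu^{-1}\power_\mu(\calX)\power_\mu(\calY);
\end{align*}
then \eqref{4_eq_power_sum_version_cauchy_id} follows by transitivity. Throughout, the hypothesis that $|xy|<1$ for all $x \in \calX$, $y \in \calY$ guarantees absolute convergence and justifies every rearrangement below.

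I would dispatch the power-sum product identity first, since it is the routine half. Taking a logarithm and expanding $-\log(1-xy) = \sum_{k\geq 1}(xy)^k/k$ gives
\begin{align*}
-\sum_{x\in\calX,\,y\in\calY}\log(1-xy) = \sum_{k\geq 1}\frac{1}{k}\,\power_k(\calX)\,\power_k(\calY).
\end{align*}
Exponentiating, expanding each exponential as a power series, and collecting the contribution indexed by the partition $\mu$ with $m_k = m_k(\mu)$ parts equal to $k$ produces exactly the factor $z_\mu^{-1}\power_\mu(\calX)\power_\mu(\calY)$, where $z_\mu = \prod_{k}k^{m_k(\mu)}m_k(\mu)!$ as defined above. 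This yields the claimed product--power-sum identity; equivalently, by the cited equivalence applied to the bases $u_\lambda = \power_\lambda$ and $v_\lambda = z_\lambda^{-1}\power_\lambda$, it records that $\langle \power_\lambda, \power_\mu\rangle = z_\lambda\,\delta_{\lambda\mu}$.

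The substantive step, and the main obstacle, is the Schur identity \eqref{4_eq_cauchy_id_schur}. By the equivalence it is equivalent to the orthonormality relation $\langle \schur_\lambda, \schur_\mu\rangle = \delta_{\lambda\mu}$, so I would establish that relation directly from the defining duality $\langle \complete_\lambda, \monomial_\mu\rangle = \delta_{\lambda\mu}$. The key inputs are the two unitriangular expansions $\schur_\lambda = \sum_\mu K_{\lambda\mu}\monomial_\mu$ and $\complete_\mu = \sum_\lambda K_{\lambda\mu}\schur_\lambda$ with the same Kostka coefficients $K_{\lambda\mu}$; the latter lets me rewrite $\schur$ in the $\complete$-basis via the inverse Kostka matrix. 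Pairing the $\complete$-expansion of $\schur_\lambda$ against the $\monomial$-expansion of $\schur_\mu$ and applying $\langle \complete,\monomial\rangle = \delta$ collapses the double sum to $(K K^{-1})_{\mu\lambda} = \delta_{\lambda\mu}$. The delicate points are the combinatorial fact $\complete_\mu = \sum_\lambda K_{\lambda\mu}\schur_\lambda$ (equivalently, that $K_{\lambda\mu}$ counts semistandard tableaux of shape $\lambda$ and content $\mu$) and the bookkeeping needed to pass from partitions of bounded length in finitely many variables to the sum over all partitions for sets $\calX$, $\calY$ of possibly different sizes; the latter is handled by the homogeneity and stability of Schur functions. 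As an alternative self-contained route to \eqref{4_eq_cauchy_id_schur}, one may expand $\prod_{x,y}(1-xy)^{-1}$ as the sum over nonnegative integer matrices $M = (m_{xy})_{x\in\calX,\,y\in\calY}$ of the monomials $\prod_{x,y}(xy)^{m_{xy}}$ and invoke the RSK correspondence to match each $M$ with a pair of semistandard Young tableaux of a common shape, thereby recovering $\sum_\lambda \schur_\lambda(\calX)\schur_\lambda(\calY)$ term by term.

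With both halves in hand, \eqref{4_eq_cauchy_id_schur} is exactly the Schur orthonormality fed through the equivalence, and \eqref{4_eq_power_sum_version_cauchy_id} follows by equating the two expressions that have now both been identified with $\prod_{x,y}(1-xy)^{-1}$.
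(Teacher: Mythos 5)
Your proposal is correct, but it is worth noting that the paper itself never proves this lemma: it is stated as classical background (Macdonald's Cauchy identity), and the paper's logic runs in the opposite direction from yours --- the identity is taken as known, and then ``in consequence,'' via the equivalence of conditions (1) and (2), the paper deduces Schur orthonormality and $\langle \power_\lambda, \power_\mu \rangle = z_\lambda \delta_{\lambda\mu}$. You reverse this: you prove $\langle \schur_\lambda, \schur_\mu \rangle = \delta_{\lambda\mu}$ independently of any Cauchy kernel, from the defining duality $\langle \complete_\lambda, \monomial_\mu \rangle = \delta_{\lambda\mu}$ together with the two Kostka expansions $\schur_\lambda = \sum_\mu K_{\lambda\mu} \monomial_\mu$ and $\complete_\mu = \sum_\lambda K_{\lambda\mu} \schur_\lambda$, and then feed orthonormality through the equivalence to get \eqref{4_eq_cauchy_id_schur}; the power-sum identity you obtain by the standard logarithm--exponential computation, and \eqref{4_eq_power_sum_version_cauchy_id} follows by transitivity. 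This is non-circular, since your orthonormality argument rests only on the Kostka-matrix duality (the combinatorial expansion of Schur functions plus iterated Pieri, neither of which presupposes a Cauchy identity), not on the inner-product statement that the paper \emph{derives} from the lemma. What your route demands beyond the paper's stated background is precisely those two Kostka facts (or, in your alternative, the RSK correspondence); what it buys is that the lemma is actually established rather than quoted, with the analytic side correctly reduced to formal identities by the hypothesis $|xy| < 1$ and the vanishing $\schur_\lambda(\calX) = 0$ for $l(\lambda) > l(\calX)$.
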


In consequence, the Schur functions form an orthonormal basis for the ring of symmetric function, while the power sums satisfy $\left\langle \power_\lambda, \power_\mu \right\rangle = z_\lambda \delta_{\lambda \mu}$ for all partitions $\lambda, \mu$. The following Lemma gives another point of view on the orthonormality of Schur functions.

\begin{lem} [Schur orthogonality, \cite{BumpLieGroups}] \label{4_lem_Schur_ortho}
Let $U(N)$ denote the unitary group of degree $N$. As $U(N)$ is compact it possesses a unique Haar measure normalized so that the volume of the entire group is 1. Whenever we integrate over $U(N)$, we integrate with respect to this measure. If for each matrix $g \in U(N)$ we write $\calR(g)$ for the multiset of its eigenvalues, then
\begin{align*}
\int_{U(N)} \schur_\mu(\calR(g)) \overline{\schur_\nu(\calR(g))}dg = \begin{cases} 1 &\text{if $\mu = \nu$ and $l(\mu) \leq N$,} \\ 0 &\text{otherwise.}\end{cases}
\end{align*}
\end{lem}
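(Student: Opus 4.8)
The plan is to trade the Haar integral over $U(N)$ for an integral over the diagonal torus by means of the Weyl integration formula, and then to exploit the determinantal definition of Schur functions (Definition~\ref{4_defn_Schur}) to evaluate the resulting torus integral by hand. First I would dispose of the degenerate cases: if $l(\mu) > N$ or $l(\nu) > N$, then $\schur_\mu(\calR(g))$ or $\schur_\nu(\calR(g))$ vanishes identically, since $\calR(g)$ is a set of $N$ variables and a Schur function in $N$ variables indexed by a partition of length exceeding $N$ is zero by definition. Hence the integrand is identically zero, matching the claimed value $0$ (this also correctly handles $\mu = \nu$ with $l(\mu) > N$). From here on I assume $l(\mu), l(\nu) \leq N$.

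Since the integrand, being a symmetric function of the eigenvalues $\calR(g)$, is a class function on $U(N)$, I apply the Weyl integration formula
$$\int_{U(N)} F(g)\, dg = \frac{1}{N!}\int_{(S^1)^N} F(\rho)\,|\Delta(\rho)|^2 \prod_{j=1}^N \frac{d\rho_j}{2\pi i\, \rho_j},$$
where $\rho = (\rho_1, \dots, \rho_N)$ ranges over eigenvalues on the unit circle and $\Delta(\rho) = \prod_{i<j}(\rho_i - \rho_j)$. Because $\schur_\mu(\rho)\,\Delta(\rho) = \det\bigl(\rho_i^{\,\mu_j + N - j}\bigr)$ and similarly for $\nu$, the Vandermonde weight cancels exactly: writing $a_j = \mu_j + N - j$ and $b_j = \nu_j + N - j$ — both strictly decreasing sequences of distinct non-negative integers — the integrand becomes $\det(\rho_i^{a_j})\,\overline{\det(\rho_i^{b_j})}$. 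Expanding both determinants as sums over $S_N$ and using $\overline{\rho_i} = \rho_i^{-1}$ on the unit circle reduces the integral to
$$\frac{1}{N!} \sum_{\sigma, \tau \in S_N} \operatorname{sgn}(\sigma)\operatorname{sgn}(\tau) \prod_{i=1}^N \frac{1}{2\pi i}\oint \rho_i^{\,a_{\sigma(i)} - b_{\tau(i)}}\,\frac{d\rho_i}{\rho_i}.$$

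The circle orthogonality $\frac{1}{2\pi i}\oint \rho^{m}\,\frac{d\rho}{\rho} = \delta_{m,0}$ finishes the bookkeeping. A term survives only when $a_{\sigma(i)} = b_{\tau(i)}$ for every $i$, which forces the sets $\{a_j\}$ and $\{b_j\}$ to coincide; as both are strictly decreasing with distinct entries, this occurs precisely when $\mu = \nu$. Thus for $\mu \neq \nu$ every term vanishes and the integral is $0$. For $\mu = \nu$ we have $a_j = b_j$, and the distinctness of the $a_j$ then forces $\sigma = \tau$, so $\operatorname{sgn}(\sigma)\operatorname{sgn}(\tau) = 1$; summing the $N!$ surviving terms and dividing by $N!$ gives $1$, as required. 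I expect the Weyl integration formula to be the one genuine external input: once the Haar measure is replaced by the explicit torus measure carrying the weight $|\Delta(\rho)|^2$, the determinantal form of the Schur functions reduces everything to a routine orthogonality count. As an alternative route one could avoid it entirely by identifying $\schur_\lambda(\calR(g))$ with the irreducible character of $U(N)$ of highest weight $\lambda$ whenever $l(\lambda) \leq N$ and invoking the orthogonality of irreducible characters supplied by the Peter--Weyl theorem.
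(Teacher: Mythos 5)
Your proof is correct, and it fills in what the paper itself leaves to a citation: the lemma is quoted from \cite{BumpLieGroups} without proof, and your Weyl-integration argument is precisely the standard one found in that reference. The key steps all check out — the Vandermonde weight $|\Delta(\rho)|^2$ cancels exactly against the denominators in the bialternant formula, circle orthogonality forces the strictly decreasing exponent sequences $(a_j)$ and $(b_j)$ to coincide (hence $\mu = \nu$) and then forces $\sigma = \tau$ so all signs are $+1$, and you correctly dispose of the degenerate case $l(\mu) > N$ (including $\mu = \nu$ there) by noting the Schur polynomial in $N$ variables vanishes identically.
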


\subsubsection{Specializations of the ring of symmetric functions} 
The definitions given in this paragraph are taken from \cite[p.~259]{Borodin2014}. A specialization $\rho$ of the ring of symmetric functions is an algebra homomorphism from $\Sym$ to $\C$. We denote the application of $\rho$ to a symmetric function $f$ as $f(\rho)$. For two specializations $\rho_1$ and $\rho_2$ we define their union $\rho = \rho_1 \cup \rho_2$ as the specialization defined on power sum symmetric functions via
$$\power_k (\rho_1 \cup \rho_2 ) = \power_k (\rho_1) + \power_k(\rho_2)$$
for all $k \geq 1$, and extended to all symmetric functions by the fact that the power sum symmetric functions form an algebraic basis of Sym. We note for later reference that 
\begin{align} \label{4_eq_union_of_specializations_power_sum}
z_\lambda^{-1} \power_\lambda(\rho_1 \cup \rho_2) & = \sum_{\substack{\mu, \nu: \\ \mu \cup \nu \sorteq \lambda}} z_\mu^{-1} \power_\mu(\rho_1) z_\nu^{-1} \power_\nu(\rho_2)
\intertext{and} \label{4_eq_union_of_specializations_schur}
\schur_\lambda(\rho_1 \cup \rho_2) & = \sum_{\mu, \nu} c_{\mu \nu}^\lambda \schur_\mu(\rho_1) \schur_{\nu}(\rho_2).
\end{align}
where $c^\lambda_{\mu \nu}$ are Littlewood-Richardson coefficients; their definition can be found in \cite[p.~142]{mac}.

\begin{defn} Let $\omega$ be the involution on the ring of symmetric functions given by $\omega(\elementary_r) = \complete_r$. Recall that $\omega(\power_n) = (-1)^{n - 1} \power_n$ and $\omega(\schur_\lambda) = \schur_{\lambda'}$ \cite[p.~24 and 42]{mac}. We define the following two specializations:
\begin{align*} 
\rho^\alpha_\calX: \Sym \to \C; f \mapsto f(\calX) \;\text{ and }\; \rho^\beta_\calX: \Sym \to \C; f \mapsto \omega(f)(\calX)
.
\end{align*}
Borodin and Corwin call specializations of type $\rho^\alpha$ and \label{symbol_finite_length_specializations} $\rho^\beta$ finite length specializations and finite length dual specializations, respectively.
\end{defn}

\subsubsection{Power sum operators}
We define two types of power sum operators on the vector space $\Sym$. For $k \geq 1$, the $k$-th product operator, which we denote \label{symbol_k-th_product_operator} $\power_k$ by a slight abuse of notation, maps the symmetric function $f$ to the product $\power_k f$. In order to define the second type of operators, recall that any symmetric function $f$ can be uniquely written as a polynomial in the power sums $\power_1, \power_2, \dots$. For $k \geq 1$, the $k$-th derivation operator maps $f \in \Sym$ to the formal derivative of this polynomial with respect to $\power_k$; we denote it by \label{symbol_k-th_derivation_operator} $\displaystyle \tfrac{\partial}{\partial \power_k}$. In analogy to power sums, we use the convention that the $\lambda$-th product/derivation operator is given by the corresponding compositions of the respective operators: for a partition $\lambda$ of length $n$, \label{symbol_lambda-th_operator} $\power_\lambda = \power_{\lambda_1} \cdots \power_{\lambda_n}$ and $\tfrac{\partial}{\partial \power_\lambda} = \tfrac{\partial}{\partial \power_{\lambda_1}} \cdots \tfrac{\partial}{\partial \power_{\lambda_n}}$.

A definition of the two power sum operators as well as most of the properties described in the following two lemmas are given in \cite[p.~76]{mac}.

\begin{lem} \label{4_lem_properties_operators} Let $f$, $g$ be symmetric functions and $k$, $l$ strictly positive integers.
\begin{enumerate}
\item The two power sum operators satisfy the following commutation relations: 
$$ \frac{\partial}{\partial \power_k} \power_l f = \begin{dcases} \power_l \frac{\partial}{\partial \power_k} f & \text{if $l \neq k$,} \\ \power_k \frac{\partial}{\partial \power_k} f + f & \text{if $l = k$.} \end{dcases}
$$
\item The product and derivation operators are almost adjoint with respect to the Hall inner product; more precisely, $\displaystyle \left\langle k \frac{\partial}{\partial \power_k} f, g \right\rangle = \left\langle f, \power_k g \right\rangle$.
\end{enumerate}
\end{lem}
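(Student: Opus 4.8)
The plan is to prove both parts by transporting everything to the polynomial model of $\Sym$. Recall that the power sums $\power_1, \power_2, \dots$ are algebraically independent and generate $\Sym$, so every symmetric function is uniquely a polynomial in these variables, and both operators are defined through this identification: $\power_l$ is multiplication by the variable $\power_l$, while $\tfrac{\partial}{\partial\power_k}$ is the formal partial derivative with respect to the variable $\power_k$. Since both sides of each asserted identity are linear (respectively bilinear) in the arguments, it suffices to check the statements on the power sum monomial basis $\{\power_\lambda\}$.

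For part (1), I would verify the identity for $f = \power_\mu$. With the $\power_i$ regarded as independent polynomial variables, the claim is simply the Leibniz rule. Multiplying by $\power_l$ raises the exponent of $\power_l$ by one; differentiating the result with respect to $\power_k$ and comparing with $\power_l \tfrac{\partial}{\partial\power_k}\power_\mu$ then yields the two cases after a one-line computation. When $l \neq k$ the two operations commute because $\tfrac{\partial\power_l}{\partial\power_k} = 0$, whereas when $l = k$ one picks up the extra summand $\power_\mu$ coming from $\tfrac{\partial\power_k}{\partial\power_k} = 1$. This is exactly the Heisenberg/Weyl commutation relation $\bigl[\tfrac{\partial}{\partial\power_k}, \power_l\bigr] = \delta_{kl}$ in the polynomial ring $\C[\power_1, \power_2, \dots]$.

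For part (2), I would exploit the orthogonality recorded after the Cauchy identities, namely $\langle \power_\lambda, \power_\mu \rangle = z_\lambda\, \delta_{\lambda\mu}$, and check the asserted adjointness for $f = \power_\lambda$ and $g = \power_\mu$. On the left, $\tfrac{\partial}{\partial\power_k}\power_\lambda = m_k(\lambda)\, \power_{\lambda^-}$, where $\lambda^-$ denotes $\lambda$ with one part equal to $k$ deleted and the factor $m_k(\lambda)$ vanishes precisely when $\lambda$ has no such part; hence $\langle k\tfrac{\partial}{\partial\power_k}\power_\lambda, \power_\mu \rangle = k\, m_k(\lambda)\, z_\mu\, \delta_{\lambda^-, \mu}$. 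On the right, $\power_k\power_\mu = \power_{\mu \cup (k)}$, so $\langle \power_\lambda, \power_k\power_\mu \rangle = z_{\mu \cup (k)}\, \delta_{\lambda, \mu \cup (k)}$. The two Kronecker deltas encode the same condition $\lambda = \mu \cup (k)$, and under that condition $m_k(\lambda) = m_k(\mu) + 1$ together with the multiplicativity $z_{\mu \cup (k)} = k\,(m_k(\mu) + 1)\, z_\mu$ forces the two scalars to coincide.

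I do not expect a genuine obstacle here: once the problem is moved to the polynomial model, the lemma merely says that multiplication by $\power_k$ and $k\,\tfrac{\partial}{\partial\power_k}$ are adjoint for the inner product making the $\power_\lambda$ orthogonal with squared norms $z_\lambda$. The only point demanding care is the bookkeeping of the multiplicities $m_k$ and of the $z_\lambda$ factors as a single part of size $k$ is added or removed, and in particular confirming that the degenerate case $m_k(\lambda) = 0$ (when $\lambda$ has no part equal to $k$) is handled correctly by the simultaneous vanishing of both sides.
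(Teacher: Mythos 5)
Your proposal is correct and follows essentially the same route as the paper: part (1) via the Leibniz rule in the polynomial model $\C[\power_1,\power_2,\dots]$, and part (2) by reducing to the basis $\{\power_\lambda\}$, using $\langle \power_\lambda,\power_\mu\rangle = z_\lambda\delta_{\lambda\mu}$, and checking that both sides equal $z_{\mu\cup(k)}$ via the identity $z_{\mu\cup(k)} = k\,(m_k(\mu)+1)\,z_\mu$. Your explicit treatment of the degenerate case $m_k(\lambda)=0$ is a point the paper handles implicitly by noting both sides vanish unless $\lambda \sorteq \mu\cup(k)$, but the arguments are the same.
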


\begin{proof} The commutation relations are a direct consequence of the product rule for the derivative. To show the second property, it is enough to consider $f = \power_\mu$ and $g = \power_\nu$ since the $\power_\lambda$, where $\lambda$ ranges over all partitions, form a (linear) basis of $\Sym$. The fact that this basis is orthogonal implies that both sides of the equation vanish unless $\mu \sorteq \nu \cup (k)$. In this case,
\begin{align*}
\hspace{39.2pt} \left\langle k \frac{\partial}{\partial \power_k} \power_\mu, \power_\nu \right\rangle = \left\langle k m_k(\mu) \power_\nu, \power_\nu \right\rangle = k m_k(\mu) z_\nu =  z_\mu = \left\langle \power_\mu, \power_k \power_\nu \right\rangle
. \hspace{39.2pt} \qedhere
\end{align*}
\end{proof}

\begin{lem} \label{4_lem_reduction_operators} Let $\mu$ and $\nu$ be partitions. Then
\begin{align} \label{4_lem_reduction_operators_eq}
\frac{\partial}{\partial \power_\mu} \power_\nu = \begin{dcases} \prod_{i \geq 1} \frac{m_i(\nu)!}{m_i(\nu \setminus \mu)!} \power_{\nu \setminus \mu}  & \text{if $\mu \subset \nu$ as sequences} \\
0 & \text{otherwise}\end{dcases}
\end{align}
as elements of the ring of symmetric functions.
\end{lem}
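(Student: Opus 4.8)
The plan is to treat the derivation operator as ordinary formal partial differentiation on the polynomial ring $\C[\power_1, \power_2, \dots]$, which is legitimate because, by definition, every symmetric function is a unique polynomial in the power sums. First I would rewrite the compound operator by collecting repeated indices: since the individual operators $\tfrac{\partial}{\partial \power_i}$ and $\tfrac{\partial}{\partial \power_j}$ commute (mixed formal partials of a polynomial agree), I may group the $l(\mu)$ factors of $\tfrac{\partial}{\partial \power_\mu}$ according to the distinct part values of $\mu$, obtaining
$$\frac{\partial}{\partial \power_\mu} = \prod_{i \geq 1} \left( \frac{\partial}{\partial \power_i} \right)^{m_i(\mu)}.$$
Likewise, $\power_\nu = \prod_{i \geq 1} \power_i^{m_i(\nu)}$ is a single monomial in the independent variables $\power_i$.

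Next I would apply the grouped operator factor by factor. Because $\tfrac{\partial}{\partial \power_i}$ affects only the variable $\power_i$, the operator $\left(\tfrac{\partial}{\partial \power_i}\right)^{m_i(\mu)}$ acts solely on the factor $\power_i^{m_i(\nu)}$, and the elementary one-variable identity $\tfrac{d^a}{dx^a} x^b = \tfrac{b!}{(b-a)!} x^{b-a}$ (interpreted as $0$ when $a > b$) gives
$$\left( \frac{\partial}{\partial \power_i} \right)^{m_i(\mu)} \power_i^{m_i(\nu)} = \begin{dcases} \frac{m_i(\nu)!}{(m_i(\nu) - m_i(\mu))!}\, \power_i^{m_i(\nu) - m_i(\mu)} & \text{if } m_i(\mu) \leq m_i(\nu), \\ 0 & \text{if } m_i(\mu) > m_i(\nu). \end{dcases}$$
Multiplying these contributions over all $i$ then yields the result, once the two cases are sorted out.

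For the case distinction I would observe that, since $\mu$ and $\nu$ are non-increasing, $\mu$ is a subsequence of $\nu$ precisely when $m_i(\mu) \leq m_i(\nu)$ for every $i$: a non-increasing subsequence of $\nu$ can contain at most $m_i(\nu)$ copies of each value $i$, and conversely any choice of multiplicities bounded by those of $\nu$ is realised by a subsequence. Hence if $\mu \not\subset \nu$ as sequences, some factor above vanishes and the whole product is $0$. If $\mu \subset \nu$, then the complement has multiplicities $m_i(\nu \setminus \mu) = m_i(\nu) - m_i(\mu)$, so that $\prod_{i \geq 1} \power_i^{m_i(\nu) - m_i(\mu)} = \power_{\nu \setminus \mu}$ and the product of the factorial prefactors is exactly $\prod_{i \geq 1} \tfrac{m_i(\nu)!}{m_i(\nu \setminus \mu)!}$, which is the claimed formula.

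There is no serious obstacle here; the content is entirely the bookkeeping of multiplicities. The one point that warrants care is the equivalence between the sequence-containment hypothesis $\mu \subset \nu$ and the pointwise inequalities $m_i(\mu) \leq m_i(\nu)$, together with the observation that the grouping of the derivation operators by index is justified by the commutativity of formal partial derivatives, rather than by the (more delicate) commutation relations between product and derivation operators recorded in Lemma~\ref{4_lem_properties_operators}.
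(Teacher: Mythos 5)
Your proof is correct and takes essentially the same approach as the paper: both factor $\frac{\partial}{\partial \power_\mu} \power_\nu$ index by index and reduce to the one-variable identity $\frac{d^a}{dx^a} x^b = \frac{b!}{(b-a)!} x^{b-a}$, with the case distinction coming from the equivalence of $\mu \subset \nu$ (as sequences) with $m_i(\mu) \leq m_i(\nu)$ for all $i$. The only difference is presentational: the paper justifies the factorization by citing the commutation relations of Lemma~\ref{4_lem_properties_operators}, whereas you observe directly that formal partial differentiation treats the other power sums as constants (the same fact in different clothing), and you additionally spell out the multiplicity bookkeeping that the paper's terse proof leaves implicit.
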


\begin{proof} Given that the $l$-th product and the $k$-th derivation operator commute whenever $l \neq k$, we may write the left-hand side in \eqref{4_lem_reduction_operators_eq} as
\begin{align*}
\frac{\partial}{\partial \power_\mu} \power_\nu = \prod_{i \geq 1} \frac{\partial}{\partial \power_i^{m_i(\mu)}} \power_i^{m_i(\nu)}.
\end{align*}
Handling each factor separately gives the right-hand side in \eqref{4_lem_reduction_operators_eq}.
\end{proof}

Lemma~\ref{4_lem_reduction_operators} can be interpreted as moving the derivation operator to the right by means of the commutation relations in order to obtain a more concrete expression. We have taken this idea from \cite{Steeptilings}, in which it is used to simplify expressions involving another pair of operators that satisfy similar commutation relations.

\subsection{Littlewood-Schur functions}
Littlewood-Schur functions are a generalization of Schur functions, whose combinatorial definition appeared for the first time in the work of Littlewood \cite{littlewood}. These functions were studied under a variety of different names: they are called hook Schur functions by Berele and Regev \cite{berele_regev}, supersymmetric polynomials by Nicoletti, Metropolis and Rota \cite{metropolis}, super-Schur functions by Brenti \cite{brenti}, and Macdonald denotes them $s_\lambda(x/y)$ \cite[p.~58ff]{mac}. 
We follow Bump and Gamburd in calling them Littlewood-Schur functions and denoting them $LS_\lambda(\calX, \calY)$ \cite{bump06}.

\begin{defn} [Littlewood-Schur functions] \label{4_defn_comb_LS} Let $\calX$ and $\calY$ be two sets of variables. For any partition $\lambda$, define
$$LS_\lambda(\calX; \calY) = \sum_{\mu, \nu} c^\lambda_{\mu \nu} \schur_\mu(\calX) \schur_{\nu'}(\calY)$$
where $c^\lambda_{\mu \nu}$ are Littlewood-Richardson coefficients; their definition can be found in \cite[p.~142]{mac}.
\end{defn}

The Littlewood-Schur function $LS_\lambda(\calX; \calY)$ is a homogeneous polynomial of degree $|\lambda|$ in the variables $\calX \cup \calY$. In contrast to the polynomials defined in the preceding section, Littlewood-Schur functions are not symmetric. However, this definition makes it apparent that $LS_\lambda(\calX; \calY)$ is symmetric in both sets of variables separately. This combinatorial approach can also be used to prove the following formula that generalizes the Cauchy identity (\textit{i.e.}\ the equality in \eqref{4_eq_cauchy_id_schur}) as well as the dual Cauchy identity (which will not be relevant for our purposes).

\begin{prop} [generalized Cauchy identity, \cite{berele_remmel}] \label{4_prop_gen_Cauchy}
Let $\calS$, $\calT$, $\calU$ and $\calV$ be sets of variables with elements in $\C$. Suppose that all numbers of the form $uv$ or $st$ with $s \in \calS$, $t \in \calT$, $u \in \calU$ and $v \in \calV$ are strictly less than 1 in absolute value. If the same holds for all numbers of the form $ut$ or for all numbers of the form $sv$, then
\begin{align*}
\sum_\lambda LS_\lambda(\calS; \calU) LS_{\lambda} (\calT; \calV) ={} & \prod_{\substack{s \in \calS \\ v \in \calV}} (1 + s v) \prod_{\substack{s \in \calS \\ t \in \calT}} (1 - s t)^{-1} \prod_{\substack{ u \in \calU \\ v \in \calV}} (1 - uv)^{-1} \prod_{\substack{u \in \calU \\ t \in \calT}} (1 + ut) .
\end{align*}
In particular, $\sum_\lambda \left| LS_\lambda(\calS; \calU) LS_{\lambda} (\calT; \calV) \right|$ possesses an upper bound that only depends on the absolute values of the elements in the four sets of variables in question.
\end{prop}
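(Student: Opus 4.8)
The plan is to identify each Littlewood--Schur function with an ordinary Schur function evaluated at a union of specializations, and then to feed this into the power sum version of the Cauchy identity. The crucial observation is that
\[
LS_\lambda(\calX;\calY)=\schur_\lambda\!\left(\rho^\alpha_\calX\cup\rho^\beta_\calY\right).
\]
Indeed, expanding the right-hand side by \eqref{4_eq_union_of_specializations_schur} gives $\sum_{\mu,\nu}c^\lambda_{\mu\nu}\schur_\mu(\calX)\,\omega(\schur_\nu)(\calY)$, and $\omega(\schur_\nu)=\schur_{\nu'}$ turns this into the defining sum of $LS_\lambda(\calX;\calY)$ from Definition~\ref{4_defn_comb_LS}. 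Writing $\rho_1=\rho^\alpha_\calS\cup\rho^\beta_\calU$ and $\rho_2=\rho^\alpha_\calT\cup\rho^\beta_\calV$, the left-hand side of the proposition becomes $\sum_\lambda\schur_\lambda(\rho_1)\schur_\lambda(\rho_2)$.

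To read off the formula I would apply \eqref{4_eq_power_sum_version_cauchy_id} in $\Sym\otimes\Sym$ and then evaluate through $\rho_1\otimes\rho_2$, obtaining $\sum_\mu z_\mu^{-1}\power_\mu(\rho_1)\power_\mu(\rho_2)$; collecting multiplicities rewrites this as $\exp\!\bigl(\sum_{k\ge1}\tfrac1k\power_k(\rho_1)\power_k(\rho_2)\bigr)$. Since $\power_k(\rho^\alpha_\calX)=\power_k(\calX)$ and, by $\omega(\power_k)=(-1)^{k-1}\power_k$, also $\power_k(\rho^\beta_\calY)=(-1)^{k-1}\power_k(\calY)$, relation \eqref{4_eq_union_of_specializations_power_sum} yields $\power_k(\rho_1)=\power_k(\calS)+(-1)^{k-1}\power_k(\calU)$ and $\power_k(\rho_2)=\power_k(\calT)+(-1)^{k-1}\power_k(\calV)$. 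Multiplying these out splits the exponent into four double sums indexed by the pairs $\calS\calT$, $\calU\calV$, $\calS\calV$ and $\calU\calT$, the first two carrying the sign $+1$ and the last two the sign $(-1)^{k-1}$ (the two $\beta$-signs cancel in the $\calU\calV$ term). Summing by $\sum_k z^k/k=-\log(1-z)$ and $\sum_k(-1)^{k-1}z^k/k=\log(1+z)$ and exponentiating reproduces precisely the four products on the right-hand side.

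The one delicate point is analytic: this exponential manipulation is literally valid only when all four families of products are $<1$ in absolute value, whereas the hypotheses permit the $sv$- or $ut$-products to be large. I would therefore make the argument rigorous through the Littlewood--Richardson coproduct rather than the exponential. Swapping the order of summation and invoking the self-duality of the Hopf algebra $\Sym$ to re-expand $\sum_\lambda c^\lambda_{\mu\nu}c^\lambda_{\rho\sigma}$, together with the conjugation symmetry $c^\lambda_{\mu\nu}=c^{\lambda'}_{\mu'\nu'}$, regroups $\sum_\lambda\schur_\lambda(\rho_1)\schur_\lambda(\rho_2)$ into a product of four independent Schur sums: two of Cauchy type, $\sum_\alpha\schur_\alpha(\calS)\schur_\alpha(\calT)=\prod_{s,t}(1-st)^{-1}$ and $\sum_\delta\schur_{\delta'}(\calU)\schur_{\delta'}(\calV)=\prod_{u,v}(1-uv)^{-1}$, and two of dual Cauchy type, $\sum_\gamma\schur_\gamma(\calT)\schur_{\gamma'}(\calU)=\prod_{u,t}(1+ut)$ and $\sum_\beta\schur_\beta(\calS)\schur_{\beta'}(\calV)=\prod_{s,v}(1+sv)$. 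The point is that the two dual Cauchy sums \emph{terminate} — a Schur function vanishes once its partition is too long, respectively too wide, for the relevant alphabet — so they are finite polynomials imposing no smallness condition, while the two Cauchy sums converge exactly under $|st|<1$ and $|uv|<1$. This is why only those two families need to be small.

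Finally, the absolute bound asserted at the end follows from the same factorization. Because $LS_\lambda(\calX;\calY)=\sum_\mu\schur_\mu(\calX)\,\schur_{\lambda'/\mu'}(\calY)$ is monomial-positive in $\calX\cup\calY$, one has $|LS_\lambda(\calS;\calU)|\le LS_\lambda(\abs(\calS);\abs(\calU))$ and likewise for the second factor, so that $\sum_\lambda|LS_\lambda(\calS;\calU)LS_\lambda(\calT;\calV)|$ is dominated by a series of nonnegative terms; by Tonelli the rearrangement above is then unconditionally justified, and it evaluates to $\prod(1-|st|)^{-1}\prod(1-|uv|)^{-1}\prod(1+|sv|)\prod(1+|ut|)$, a finite quantity depending only on the absolute values of the variables. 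I expect the main work to lie not in any single computation but in this bookkeeping of convergence; the stated hypotheses are comfortably strong enough to license every step.
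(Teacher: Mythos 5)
Your proof is correct, but it takes a genuinely different route from the paper: the paper does not prove the product formula at all. It quotes the identity from Berele and Remmel (whose argument is combinatorial, built on the tableau description of $LS_\lambda$) and only justifies the final sentence, namely the domination
\begin{align*}
|LS_\lambda(\calX;\calY)| \leq \sum_{\mu,\nu} c^\lambda_{\mu\nu}\,|\schur_\mu(\calX)|\,|\schur_{\nu'}(\calY)| \leq LS_\lambda(\abs(\calX);\abs(\calY)),
\end{align*}
which rests on the nonnegativity of Littlewood--Richardson coefficients; your closing Tonelli step uses exactly this inequality and so coincides with the paper there. For the identity itself, your factorization is sound: expanding both factors by Definition~\ref{4_defn_comb_LS}, re-expanding $\sum_\lambda c^\lambda_{\mu\nu}c^\lambda_{\rho\sigma}$ by self-duality of $\Sym$, and using $c^\lambda_{\mu\nu}=c^{\lambda'}_{\mu'\nu'}$ does regroup the left-hand side into the two Cauchy sums (requiring $|st|<1$ and $|uv|<1$) times the two dual-Cauchy sums, and the latter do terminate, since $\schur_\beta(\calS)\schur_{\beta'}(\calV)$ vanishes unless $\beta$ fits inside the rectangle $\left\langle l(\calV)^{l(\calS)} \right\rangle$, and similarly for the $ut$-sum. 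Two remarks. First, you were right to discard the exponential/power-sum computation as a proof: it genuinely requires all four families of products to lie inside the unit disc, which the hypotheses do not grant. Second, your argument proves slightly more than is claimed: the hypothesis that all $ut$ or all $sv$ be less than $1$ in absolute value is never used, so it appears to be an artifact of the cited combinatorial proof rather than a necessity. The one step you should spell out rather than merely name is the self-duality identity $\sum_\lambda c^\lambda_{\mu\nu}c^\lambda_{\rho\sigma}=\sum_{\alpha,\beta,\gamma,\delta}c^\mu_{\alpha\beta}c^\nu_{\gamma\delta}c^\rho_{\alpha\gamma}c^\sigma_{\beta\delta}$, obtained by computing $\left\langle \schur_\mu\schur_\nu,\schur_\rho\schur_\sigma \right\rangle$ in two ways using that multiplication and comultiplication are adjoint; it is the crux of the regrouping. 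In sum, your route buys a self-contained algebraic proof and a mild weakening of the hypotheses, while the paper's citation buys brevity and a bijective explanation of the same identity.
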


The last sentence is a consequence of the fact that Littlewood-Richardson coefficients are non-negative, which entails that 
\begin{multline*}
|LS_\lambda(\calX; \calY)| \leq \sum_{\mu, \nu} c^\lambda_{\mu \nu} |\schur_\mu(\calX)| |\schur_{\nu'}(\calY)| \leq \\ 
\sum_{\mu, \nu} c^\lambda_{\mu \nu} \schur_\mu(\abs(\calX)) \schur_\nu(\abs(\calY)) = LS_\lambda(\abs(\calX); \abs(\calY)).
\end{multline*}
We remark that the second inequality follows immediately from the combinatorial definition for Schur functions.

\begin{rem} \label{4_rem_LS_as_specialization}
The theory of specializations provides an alternative expression for $LS_\lambda(\calX; \calY)$. Indeed,
\begin{align*}
LS_\lambda(\calX; \calY) = \sum_{\mu, \nu} c_{\mu \nu}^\lambda \schur_\mu(\calX) \schur_{\nu'}(\calY)
= \sum_{\mu, \nu} c_{\mu \nu}^\lambda \schur_\mu \left(\rho^\alpha_\calX\right) \schur_\nu\left(\rho^\beta_\calY\right)
= \schur_\lambda \left(\rho^\alpha_\calX \cup \rho^\beta_\calY\right).
\end{align*}
The last equality is due to \eqref{4_eq_union_of_specializations_schur}. This perspective allows us to consider Littlewood-Schur functions a special type of Schur functions, which renders the following specialization intuitive: $LS_\lambda(\calX; \emptyset) = \schur_\lambda(\calX)$.
\end{rem}

Yet another way to view Littlewood-Schur functions is given by Moens and Van der Jeugt's determinantal formula. Their expression for $LS_\lambda(-\calX;\calY)$ depends on the index of the partition $\lambda$.

\begin{defn} [index of a partition] The $(m,n)$-index of a partition $\lambda$ is the largest (possibly negative) integer $k$ with the properties that $(m + 1 - k, n + 1 - k) \not\in \lambda$ and $k \leq \min\{m,n\}$.
\end{defn}

If $(m,n) \not\in \lambda$, then $k$ is the side of the largest square with bottom-right corner $(m,n)$ that fits next to the diagram of the partition $\lambda$. If $(m,n) \in \lambda$, then $-k$ is the side of the largest square with top-left corner $(m,n)$ that fits inside the diagram of $\lambda$. Let us illustrate this by a sketch: the hatched area is the diagram of some partition $\lambda$.
\begin{center}
\begin{tikzpicture}
\fill[pattern=north west lines, pattern color=black!40!white] (-0.25, 0) rectangle (1, 2.5);
\fill[pattern=north west lines, pattern color=black!40!white] (1, 0.75) rectangle (1.25, 2.5);
\fill[pattern=north west lines, pattern color=black!40!white] (1.25, 1.5) rectangle (1.5, 2.5);
\fill[pattern=north west lines, pattern color=black!40!white] (1.5, 2) rectangle (2.25, 2.5);
\fill[pattern=north west lines, pattern color=black!40!white] (2.25, 2.25) rectangle (2.5, 2.5);
\draw (-0.25, 0) -- (1, 0);
\draw (1, 0) -- (1, 0.75);
\draw (1, 0.75) -- (1.25, 0.75);
\draw (1.25, 0.75) -- (1.25, 1.5);
\draw (1.25, 1.5) -- (1.5, 1.5);
\draw (1.5, 1.5) -- (1.5, 2);
\draw (1.5, 2) -- (2.25, 2);
\draw (2.25, 2) -- (2.25, 2.25);
\draw (2.25, 2.25) -- (2.5, 2.25);
\draw (2.5, 2.25) -- (2.5, 2.5);
\draw (-0.25, 2.5) -- (2.5, 2.5);
\draw (-0.25, 2.5) -- (-0.25, 0);
\draw (1.25, 0.75) rectangle (1.75, 0.25);
\draw (1.75, 2) rectangle (2.5, 1.25);
\draw (1.25, 1.25) rectangle (0.75, 1.75);
\draw[decoration={brace, raise=5pt, mirror},decorate] (1.75, 0.25) -- node[right=6pt] {$\scriptstyle{k}$} (1.75, 0.75);
\draw[decoration={brace, raise=5pt, mirror},decorate] (2.5, 1.25) -- node[right=6pt] {$\scriptstyle{k}$} (2.5, 2);
\draw[decoration={brace, raise=5pt},decorate] (0.75, 1.25) -- node[left=6pt] {$\scriptstyle{-k}$} (0.75, 1.75);
\node[anchor=north west] at (1.75, 0.25) {$\scriptstyle{(m,n)}$};
\node[anchor=north west] at (2.5, 1.25) {$\scriptstyle{(m,n)}$};
\node[anchor=south east] at (0.75, 1.75) {$\scriptstyle{(m,n)}$};
\node at (1.75, 0.25) {\tiny{\textbullet}};
\node at (2.5, 1.25) {\tiny{\textbullet}};
\node at (0.75, 1.75) {\tiny{\textbullet}};
\node at (0, 2.6) {};
\end{tikzpicture}
\end{center}
We remark that the definition given above is not equivalent to the definition of index used in \cite{vanderjeugt}. Our notion has the advantage of being invariant under conjugation.

\begin{thm} [determinantal formula for Littlewood-Schur functions, adapted from \cite{vanderjeugt}] \label{4_thm_det_formula_for_Littlewood-Schur}
Let $\calX$ and $\calY$ be sets of variables of length $n$ and $m$, respectively, so that the elements of $\calX \cup \calY$ are pairwise distinct. Let $\lambda$ be a partition with $(m,n)$-index $k$. If $k$ is negative, then $LS_\lambda(-\calX; \calY) = 0$; otherwise,
\begin{align*}
LS_\lambda(-\calX; \calY) ={} & \varepsilon(\lambda) \frac{\Delta(\calY; \calX)}{\Delta(\calX) \Delta(\calY)} \\
& \times \det \begin{pmatrix} \left( (x - y)^{-1} \right)_{\substack{x \in \calX \\ y \in \calY}} & \left( x^{\lambda_j + n - m - j} \right)_{\substack{x \in \calX \\ 1 \leq j \leq n - k}} \\ \left( y^{\lambda'_i + m - n - i} \right)_{\substack{1 \leq i \leq m - k \\ y \in \calY}} & 0\end{pmatrix}
\end{align*}
where $\varepsilon(\lambda) = (-1)^{\left|\lambda_{[n - k]} \right|} (-1)^{mk} (-1)^{k(k - 1)/2}$.
\end{thm}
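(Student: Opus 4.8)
The plan is to deduce the stated formula from Moens and Van der Jeugt's determinantal formula for supersymmetric Schur polynomials \cite{vanderjeugt}, the genuine work being to pass from their conventions to ours. The bridge is Remark~\ref{4_rem_LS_as_specialization}, which gives $LS_\lambda(-\calX; \calY) = \schur_\lambda(\rho^\alpha_{-\calX} \cup \rho^\beta_\calY)$ and thereby identifies $LS_\lambda(-\calX;\calY)$ with one of their $s_\lambda(x/y)$ after relabeling the alphabets and inserting the sign $\calX \mapsto -\calX$. First I would fix the dictionary precisely: since $\rho^\alpha_\calX$ is the ordinary and $\rho^\beta_\calY$ the dual specialization, the ordinary alphabet is $\calX$ (length $n$) and the dual one is $\calY$ (length $m$), so that $\lambda$ and the shifted exponents $\lambda_j + n - m - j$ attach to $\calX$ while the conjugate $\lambda'$ and the exponents $\lambda'_i + m - n - i$ attach to $\calY$. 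The minus sign on $\calX$ is exactly what converts the $(1+\cdot\,)$ cross-factors appearing in the generalized Cauchy identity (Proposition~\ref{4_prop_gen_Cauchy}) into the subtractive data $(x-y)^{-1}$ and $\Delta(\calY;\calX)$ that govern the determinant.

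Next I would dispose of the vanishing clause. By Definition~\ref{4_defn_comb_LS}, $LS_\lambda(-\calX;\calY) = \sum_{\mu,\nu} c^\lambda_{\mu\nu}\schur_\mu(-\calX)\schur_{\nu'}(\calY)$, and a nonzero summand forces $l(\mu) \le n$ and $l(\nu') = \nu_1 \le m$; since $c^\lambda_{\mu\nu}\neq 0$ entails $\mu,\nu \subset \lambda$, such a pair exists precisely when $\lambda$ fits the $(n,m)$-hook, i.e. $\lambda_{n+1} \le m$. Unwinding the $(m,n)$-index definition, $(m+1-k,n+1-k)\notin\lambda$ reads $\lambda_{n+1-k}\le m-k$, so $k \ge 0$ holds if and only if this already holds at $k=0$, i.e. $\lambda_{n+1}\le m$. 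Hence $k<0$ is equivalent to the failure of the hook condition and thus to $LS_\lambda(-\calX;\calY)=0$, which both proves the vanishing clause and places the main formula in the regime $0 \le k \le \min\{m,n\}$ where Van der Jeugt's formula applies.

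The technical heart is reconciling the two notions of index, since the one used in \cite{vanderjeugt} is not the conjugation-symmetric one of this paper. Here I would use the geometric reading recorded after the definition: when $(m,n)\notin\lambda$ the index $k$ is the side of the largest empty square with bottom-right corner $(m,n)$, and when $(m,n)\in\lambda$ it is minus the side of the largest full square with top-left corner $(m,n)$. I would check that this integer is the atypicality parameter controlling Van der Jeugt's determinant --- equivalently, that it yields the block sizes $n-k$ and $m-k$ and the exponent shifts displayed above --- and that it is manifestly invariant under $(\calX,\calY,\lambda)\mapsto(\calY,\calX,\lambda')$, matching the symmetry $s_\lambda(x/y)=s_{\lambda'}(y/x)$ of the left-hand side. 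Granting this identification, recasting their determinant into the displayed $(m+n-k)\times(m+n-k)$ block form is a matter of permuting rows and columns so that the Cauchy-type block $((x-y)^{-1})$ occupies the top-left corner.

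The step I expect to be the main obstacle is the exact determination of the prefactor $\varepsilon(\lambda)=(-1)^{|\lambda_{[n-k]}|}(-1)^{mk}(-1)^{k(k-1)/2}$. Three independent sources of signs must be tracked and combined: the homogeneity sign from $\calX\mapsto-\calX$, recorded by the truncated size $|\lambda_{[n-k]}|$ together with the factors extracted from $\Delta(\calY;\calX)/(\Delta(\calX)\Delta(\calY))$; the permutation sign $(-1)^{mk}$ incurred when the rows and columns of the two power blocks are interleaved to reach block form; and the sign $(-1)^{k(k-1)/2}$ arising from reversing the $k$ coincident rows and columns along the anti-diagonal of the atypical Cauchy block. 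I would pin the combined sign down on small families --- rectangular $\lambda$, and the typical case $k=0$ where the formula must collapse to the Sergeev--Pragacz bialternant expression for $LS_\lambda(-\calX;\calY)$ --- and then argue it is forced in general by the conjugation symmetry and by homogeneity in each alphabet separately.
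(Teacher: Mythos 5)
Your overall route is the only sensible one, and it is what the paper's attribution ``adapted from \cite{vanderjeugt}'' implicitly means: the paper gives \emph{no} proof of Theorem~\ref{4_thm_det_formula_for_Littlewood-Schur}, quoting it as background, so the real work is exactly the translation you outline (dictionary via Remark~\ref{4_rem_LS_as_specialization}, vanishing clause, reconciliation of the two notions of index, sign bookkeeping). Your vanishing-clause argument is essentially correct, with one caveat: containment $\mu, \nu \subset \lambda$ alone does not give the ``only if'' direction. To conclude that \emph{every} summand $c^\lambda_{\mu\nu}\,\schur_\mu(-\calX)\,\schur_{\nu'}(\calY)$ in Definition~\ref{4_defn_comb_LS} vanishes when $\lambda_{n+1} > m$, you need a genuine Littlewood--Richardson inequality, e.g.\ $c^\lambda_{\mu\nu} \neq 0$ implies $\lambda_{i+j-1} \leq \mu_i + \nu_j$ (take $i = n+1$, $j = 1$, using $\mu_{n+1} = 0$); this is Berele and Regev's hook theorem, not a consequence of mere containment.

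The genuine gap is your final step, the determination of $\varepsilon(\lambda)$. The proposed method --- check rectangles and the case $k = 0$, then argue the sign is ``forced in general by the conjugation symmetry and by homogeneity in each alphabet separately'' --- cannot work, because the supporting premise is false: $LS_\lambda(-\calX;\calY)$ is \emph{not} homogeneous in each alphabet separately (already $LS_{(1)}(\calX;\calY) = \power_1(\calX) + \power_1(\calY)$ mixes bidegrees $(1,0)$ and $(0,1)$), and conjugation symmetry together with finitely many instances cannot pin down a sign such as $(-1)^{\left|\lambda_{[n-k]}\right|}$, which depends on the truncated size of $\lambda$ itself and so takes both values infinitely often within any family you could check. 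The sign must be computed, not interpolated: substitute $x \mapsto -x$ directly into Moens and Van der Jeugt's identity and track every factor. Then $\Delta(-\calX) = (-1)^{n(n-1)/2}\Delta(\calX)$, each cross factor $(x + y)$ becomes $(y - x)$ and assembles into $\Delta(\calY;\calX)$, each Cauchy entry $(x+y)^{-1}$ becomes $-(x-y)^{-1}$, and the $j$-th of the $n-k$ Schur-type columns contributes $(-1)^{\lambda_j + n - m - j}$, whose product over $j \leq n - k$ produces precisely the factor $(-1)^{\left|\lambda_{[n-k]}\right|}$ times $\lambda$-independent signs; combining these with the permutation sign from rearranging rows and columns into the displayed block form yields $\varepsilon(\lambda)$ term by term. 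This computation is mechanical but unavoidable, and it also forces you to carry out, rather than only announce, the identification of the paper's conjugation-invariant index with the parameter governing the block sizes in \cite{vanderjeugt} --- a point the paper explicitly flags as a discrepancy between the two definitions.
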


This theorem makes it easy to see that Littlewood-Schur functions behave well under transposition of their indexing partition; more concretely, for any partition $\lambda$, $LS_\lambda(\calX; \calY) = LS_{\lambda'}(\calY; \calX)$.  The following is another immediate consequence that will prove useful for the computations in Section~\ref{4_sec_ratios_and_log_der}. Corollary~\ref{4_cor_index=0} is a special case of Berele and Regev's factorization formula \cite{berele_regev}, which was originally derived without the help of the (more recent) determinantal formula. 

\begin{cor} [\cite{berele_regev}] \label{4_cor_index=0} Let $\calX$ and $\calY$ be sets of variables with $n$ and $m$ elements, respectively, and let $\lambda$ be a partition with $(m,n)$-index 0. If $l(\lambda) \leq n$, then
\begin{align} \label{4_cor_index=0_eq}
LS_\lambda(-\calX; \calY) = \Delta(\calY; \calX) \schur_{\lambda - \langle m^n \rangle} (-\calX)
.
\end{align}
\end{cor}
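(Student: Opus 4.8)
The plan is to specialize the determinantal formula of Theorem~\ref{4_thm_det_formula_for_Littlewood-Schur} to the index $k = 0$ and then collapse the resulting $(n+m)\times(n+m)$ determinant using the geometric constraints imposed by the hypotheses. First I would unpack what the two assumptions mean for the shape of $\lambda$. The condition $l(\lambda) \le n$ forces $\lambda'_i \le n$ for every $i$, while $(m,n)$-index $0$ means that $k = 1$ fails, i.e. $(m,n) \in \lambda$, so $\lambda_n \ge m$ and therefore $\lambda$ contains the rectangle $\langle m^n \rangle$ and $\lambda - \langle m^n \rangle$ is a genuine partition. Combining the two, columns $1, \dots, m$ of $\lambda$ reach all the way down to row $n$, whence $\lambda'_i = n$ for $1 \le i \le m$. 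This collapse is the structural heart of the argument, as it is exactly what strips away all genuine dependence on $\calY$.

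With $k = 0$ the matrix in Theorem~\ref{4_thm_det_formula_for_Littlewood-Schur} is a block matrix $\begin{pmatrix} A & B \\ C & 0\end{pmatrix}$, in which $B = \left(x^{\lambda_j + n - m - j}\right)_{x \in \calX,\, 1 \le j \le n}$ is $n \times n$, $C = \left(y^{\lambda'_i + m - n - i}\right)_{1 \le i \le m,\, y \in \calY}$ is $m \times m$, and the bottom-right block vanishes. Expanding by the generalized Laplace rule along the last $n$ columns, the only row selection giving a nonzero minor is the top $n$ rows, since the bottom $m$ entries of those columns are zero; this yields $\det\begin{pmatrix} A & B \\ C & 0\end{pmatrix} = (-1)^{nm} \det(B)\det(C)$, the sign coming from the sums of the selected row and column indices. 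Next I would identify the two blocks against Definition~\ref{4_defn_Schur}: writing $\lambda_j + n - m - j = (\lambda_j - m) + (n - j)$ gives $\det(B) = \Delta(\calX)\,\schur_{\lambda - \langle m^n \rangle}(\calX)$, while the collapse $\lambda'_i = n$ turns the exponents of $C$ into $m - i$, so that $C$ is a Vandermonde matrix and $\det(C) = \Delta(\calY)$.

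Substituting back, the factors $\Delta(\calX)\Delta(\calY)$ cancel against the denominator of the determinantal formula, leaving $LS_\lambda(-\calX; \calY) = \varepsilon(\lambda)(-1)^{nm}\Delta(\calY; \calX)\,\schur_{\lambda - \langle m^n \rangle}(\calX)$. It then remains only to reconcile signs. With $k = 0$ and $l(\lambda) \le n$ the prefactor reduces to $\varepsilon(\lambda) = (-1)^{|\lambda|}$, so the accumulated sign is $(-1)^{|\lambda| + nm}$. On the other hand, homogeneity of Schur functions of degree $|\lambda - \langle m^n \rangle| = |\lambda| - mn$ gives $\schur_{\lambda - \langle m^n \rangle}(-\calX) = (-1)^{|\lambda| - mn}\schur_{\lambda - \langle m^n \rangle}(\calX) = (-1)^{|\lambda| + nm}\schur_{\lambda - \langle m^n \rangle}(\calX)$. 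The two signs coincide, converting $\schur_{\lambda - \langle m^n \rangle}(\calX)$ into $\schur_{\lambda - \langle m^n \rangle}(-\calX)$ and establishing \eqref{4_cor_index=0_eq}.

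I expect the main obstacle to be bookkeeping of a twofold nature. The conceptual step is recognizing that the index-$0$ hypothesis together with $l(\lambda) \le n$ pins down $\lambda'_i = n$ for $i \le m$ exactly, which is what degenerates $C$ into a Vandermonde; without this one would expect a residual Schur factor in $\calY$, so this observation is what makes the clean right-hand side possible. The technical step is keeping the three sources of signs — the Laplace sign $(-1)^{nm}$, the prefactor $\varepsilon(\lambda)$, and the homogeneity sign of $\schur_{\lambda - \langle m^n \rangle}(-\calX)$ — straight so that they combine correctly. The agreement of these independent sign computations serves as a reassuring internal check on the whole calculation.
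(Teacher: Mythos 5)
Your argument is essentially identical to the paper's proof: specialize Theorem~\ref{4_thm_det_formula_for_Littlewood-Schur} at index $k=0$, factor the block determinant into $(-1)^{mn}\det(B)\det(C)$, observe that $l(\lambda)\le n$ together with index $0$ forces $\lambda'_i = n$ for $1\le i\le m$ so that $C$ degenerates to a Vandermonde matrix cancelling $\Delta(\calY)$, and reconcile $\varepsilon(\lambda)=(-1)^{|\lambda|}$ with the Laplace sign and the homogeneity sign of $\schur_{\lambda - \langle m^n\rangle}(-\calX)$. All of these steps, including the sign bookkeeping, match the paper and are correct.

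There is one omission: the determinantal formula you invoke requires the elements of $\calX \cup \calY$ to be \emph{pairwise distinct} (its matrix contains entries $(x-y)^{-1}$, and $\Delta(\calX)$, $\Delta(\calY)$ sit in denominators), whereas Corollary~\ref{4_cor_index=0} imposes no such hypothesis. Your proof therefore only establishes the identity on the open dense set of parameter values where all variables are distinct. The paper closes this gap with one further sentence: both sides of \eqref{4_cor_index=0_eq} are polynomials in $\calX \cup \calY$, so agreement on infinitely many points (more precisely, on a Zariski-dense set) forces agreement identically. You should add this continuity/polynomiality step to cover the general case.
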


\begin{proof} First suppose that the elements in of $\calX \cup \calY$ are pairwise distinct. Then
\begin{align*}
LS_\lambda(-\calX; \calY) ={} & \varepsilon(\lambda) \frac{\Delta(\calY; \calX)}{\Delta(\calX) \Delta(\calY)} \det \begin{pmatrix} \left( (x - y)^{-1} \right)_{\substack{x \in \calX \\ y \in \calY}} & \left( x^{\lambda_j + n - m - j} \right)_{\substack{x \in \calX \\ 1 \leq j \leq n}} \\ \left( y^{\lambda'_i + m - n - i} \right)_{\substack{1 \leq i \leq m \\ y \in \calY}} & 0\end{pmatrix}
.
\intertext{The Schur blocks in the bottom-left and the top-right corner of the matrix are squares. Hence,}
LS_\lambda(-\calX; \calY) ={} & \varepsilon(\lambda) \frac{\Delta(\calY; \calX)}{\Delta(\calX) \Delta(\calY)} \\
\times & (-1)^{mn} \det \left( x^{\lambda_j + n - m - j} \right)_{\substack{x \in \calX \\ 1 \leq j \leq n}} \det \left( y^{\lambda'_i + m - n - i} \right)_{\substack{1 \leq i \leq m \\ y \in \calY}
.}
\intertext{On the one hand, the assumption that the length of $\lambda$ is less than $n$ implies that $\lambda'_i \leq n$ for $i \geq 1$. On the other hand, the assumption that the $(m,n)$-index of $\lambda$ is 0 implies that $\lambda'_i \geq n$ for $1 \leq i \leq m$. Therefore, the second determinant is actually a Vandermonde determinant, which cancels with $\Delta(\calY)$. This allows us to conclude that}
LS_\lambda(-\calX; \calY) ={} & \varepsilon(\lambda) \Delta(\calY; \calX) (-1)^{mn} \schur_{\lambda - \langle m^n \rangle} (\calX) = \Delta(\calY; \calX) \schur_{\lambda - \langle m^n \rangle} (-\calX)
\end{align*}
since $\varepsilon(\lambda) = (-1)^{|\lambda|}$ by the assumptions on $\lambda$. If the elements of $\calX \cup \calY$ are not pairwise distinct, the equality in \eqref{4_cor_index=0_eq} follows from the observation that both sides are polynomials in $\calX \cup \calY$, which agree on infinitely many points.
\end{proof}

A less immediate but equally useful consequence of the determinantal formula is the simplest case of the first overlap identity:

\begin{lem} \cite{overlap} \label{4_lem_first_overlap_id} Let $\calX$ and $\calY$ be sets of variables with $n$ and $m$ elements, respectively, so that the elements of $\calX$ are pairwise distinct. Let $\lambda$ be a partition with $(m,n)$-index $k$. If $0 \leq l \leq \min\{n - k, n\}$, then
\begin{align*} 
LS_{\lambda} (-\calX; \calY) ={} & \sum_{\substack{\calS, \calT \subset \calX: \\ \calS \cup_{l, n - l} \calT \sorteq \calX}} \frac{LS_{\lambda_{[l]} + \left\langle (n - l)^l \right\rangle}(-\calS; \calY) LS_{\lambda_{(l + 1, l + 2, \dots)}}(- \calT; \calY)}{\Delta(\calT; \calS)}
.
\end{align*}
\end{lem}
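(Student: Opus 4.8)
The plan is to derive the identity directly from the determinantal formula of Theorem~\ref{4_thm_det_formula_for_Littlewood-Schur}, applied to all three Littlewood-Schur functions in the statement. Write $\mu = \lambda_{[l]} + \langle (n-l)^l \rangle$ and $\nu = \lambda_{(l+1, l+2, \dots)}$, let $k_1$ and $k_2$ be the $(m,l)$- and $(m, n-l)$-indices of $\mu$ and $\nu$, and let $D$, $D_1(\calS)$, $D_2(\calT)$ be the determinants that the theorem attaches to $LS_\lambda(-\calX; \calY)$, $LS_\mu(-\calS; \calY)$ and $LS_\nu(-\calT; \calY)$, of sizes $n+m-k$, $l+m-k_1$ and $n-l+m-k_2$. (One first assumes the elements of $\calX \cup \calY$ pairwise distinct so that the theorem applies, the general case following by continuity as in Corollary~\ref{4_cor_index=0}, both sides being rational in $\calY$ with no poles there.) Substituting these formulas and bookkeeping the Vandermonde prefactors is the first step: using $\Delta(\calY; \calX) = \Delta(\calY; \calS)\Delta(\calY; \calT)$, $\Delta(\calX) = \pm \Delta(\calS)\Delta(\calT)\Delta(\calS; \calT)$ and $\Delta(\calT; \calS) = (-1)^{l(n-l)}\Delta(\calS; \calT)$, all Vandermonde factors cancel except a single surviving copy of $\Delta(\calY)$, and the lemma becomes equivalent to a purely determinantal identity of the shape
\begin{align*}
\Delta(\calY)\, D = \sum_{\substack{\calS, \calT \subset \calX: \\ \calS \cup_{l, n-l} \calT \sorteq \calX}} \pm\, D_1(\calS)\, D_2(\calT),
\end{align*}
with the signs governed by $\varepsilon(\mu)\varepsilon(\nu)/\varepsilon(\lambda)$ together with reordering and Laplace signs.

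The second step records why the shift by $\langle (n-l)^l \rangle$ is forced. Theorem~\ref{4_thm_det_formula_for_Littlewood-Schur} equips $D_1$ with power entries $s^{\mu_j + l - m - j}$, and since $\mu_j = \lambda_j + (n-l)$ these exponents reduce to $\lambda_j + n - m - j$, which are exactly the exponents carried by the power columns of $D$; likewise the entries $t^{\nu_j + (n-l) - m - j}$ of $D_2$ equal $t^{\lambda_{l+j} + n - m - (l+j)}$. In other words, the shift is precisely the amount needed for the power columns of the two small determinants to take the same shape as columns of the large one, which is what permits $D_1$ and $D_2$ to be reassembled into $D$. The hypothesis $0 \le l \le \min\{n-k, n\}$ (which forces $l \le n-k$) guarantees that $D$ really has at least $l$ power columns available to be split off into the $\calS$-factor.

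The heart of the matter, and the main obstacle, is that the $\calY$-rows and the $\calY$-columns occur once in $D$ but once in \emph{each} of $D_1$ and $D_2$; a plain Laplace expansion of $D$ along its power columns produces a $\calY$-free minor in $\calS$ and therefore cannot match the right-hand side. The surviving factor $\Delta(\calY) = \det(y^{m-i})_{y \in \calY, 1 \le i \le m}$ indicates the remedy: one should realise $\Delta(\calY)\, D$ as a single determinant $M$ of size $n + 2m - k$, obtained by adjoining the $\calY$-Vandermonde and interleaving its rows and columns with the $\calY$-rows and $\calY$-columns of $D$. A dimension count then dictates that everything matches precisely when $k_1 + k_2 = k$ --- a combinatorial identity between the three indices that I would establish first --- for under that relation the $n$ rows indexed by $\calX$ split as $l + (n-l)$, the $2m-k$ remaining rows as $(m-k_1)+(m-k_2)$, the $2m$ columns over $\calY$ as $m+m$, and the $n-k$ power columns as $(l-k_1)+(n-l-k_2)$, i.e.\ exactly into the row and column shapes of $D_1(\calS)$ and $D_2(\calT)$. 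The identity would then follow from the generalized Laplace expansion of $M$ along a fixed column partition: the block and zero structure should force every term whose row selection does not split the $\calX$-rows cleanly into some $\calS \sqcup \calT$ to vanish, leaving exactly the products $\pm D_1(\calS) D_2(\calT)$. The genuine labor is this vanishing-and-matching of minors after the required permutations, together with the verification that the accumulated signs collapse to those predicted by the $\varepsilon$-factors; this is the content of the simplest case of the first overlap identity established in \cite{overlap}, to which the detailed computation can ultimately be referred.
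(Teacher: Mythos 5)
First, a point of reference: the paper contains no proof of Lemma~\ref{4_lem_first_overlap_id} at all --- the statement is imported verbatim from \cite{overlap}, with only the remark that it is a consequence of the determinantal formula. Your proposal therefore has to stand as an argument in its own right, and as written it does not: the step you yourself call ``the genuine labor'' --- the vanishing-and-matching of minors in the generalized Laplace expansion, together with the sign control --- is deferred to \cite{overlap}, and the result established in \cite{overlap} \emph{is} the statement being proved. A proof whose decisive step is a citation of the theorem under proof is circular. What you actually have is a correct reduction (your Vandermonde bookkeeping is right, and it does leave exactly one surviving copy of $\Delta(\calY)$) plus a plan for the hard part.

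Second, the matching you do verify is the easy half, and the half you skip is precisely where the hypotheses of the lemma act. The power-column exponents agree for trivial reasons ($\mu_j = \lambda_j + (n-l)$ and $\nu_j = \lambda_{l+j}$), but you never check the $\calY$-\emph{row} exponents. For the blocks of your matrix $M$ to be identifiable with $D_1(\calS)$ and $D_2(\calT)$ one also needs $\mu'_i + m - l - i = m - i$ for all $i \le m - k_1$ (every $\calY$-row of $D_1$ must be a Vandermonde row) and $\nu'_i + m - (n-l) - i = \lambda'_i + m - n - i$ for all $i \le m - k_2$ (every $\calY$-row of $D_2$ must be a $\calY$-row of $D$); equivalently $\mu'_i = l$ and $\nu'_i = \lambda'_i - l$ on those ranges. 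Both claims reduce to the inequality $\lambda'_i \ge l$ for $i \le m-k$, which follows from maximality of the index, $\lambda'_{m-k} \ge n-k$, combined with the hypothesis $l \le n-k$. The same inequality settles your index relation in a stronger form than you assert: under the lemma's hypothesis one always has $k_1 = 0$ and $k_2 = k$, not merely $k_1 + k_2 = k$. So the dimension count is a by-product of the exponent matching, not a substitute for it; without these verifications the minors of $M$ are simply not $D_1$ and $D_2$.

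That said, your construction does close up without any appeal to \cite{overlap} once the above is supplied, so the gap is fillable. Give $M$ column blocks $Y_1, P_1, Y_2, P_2$ and row blocks $\calX$, $V$ (the $m$ Vandermonde rows, nonzero only in $Y_1$) and $W$ (the $m-k$ $\calY$-rows of $D$, nonzero only in $Y_2$), with Cauchy entries $(x-y)^{-1}$ in both $Y_1$ and $Y_2$ on the $\calX$-rows and the power columns of $D$ split as $P_1$ (indices $j \le l$) and $P_2$ (indices $l < j \le n-k$). Then two Laplace expansions of the same determinant finish the proof: expanding along $Y_1$ alone, any admissible row set must contain every $V$-row (a $V$-row left in the complement is a zero row) and hence equals $V$, giving $\det M = \pm\Delta(\calY)\, D$; expanding along $Y_1 \cup P_1$, the admissible row sets are exactly $V \cup \calS$ with $\calS \subset \calX$ of size $l$, giving $\det M = \sum_{\calS} \pm D_1(\calS) D_2(\calT)$ with $\calT = \calX \setminus \calS$. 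Comparing the two expansions, checking the accumulated signs against the $\varepsilon$-factors, and noting separately that for $k < 0$ both sides of the lemma vanish (the $(m,n-l)$-index of $\nu$ is then negative as well) yields the lemma.
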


Recall that the subscripts in $\calS \cup_{l, n - l} \calT$ indicate that $l(S) = l$ and $l(\calT) = n - l$, respectively.

\section{On the Murnaghan-Nakayama rule} \label{4_sec_MN_rule}

This section is dedicated to the Murnaghan-Nakayama rule. After stating the rule in its original form we present a few generalizations and variations, some of which are new and some are already known.

\begin{thm} [Murnaghan-Nakayama rule, \cite{Murnaghan, Nakayama}] \label{4_thm_MN_rule} Let $\mu$ be a partition. For any strictly positive integer $k$,
\begin{align*}
\power_k \schur_\mu = \sum_{\substack{\lambda: \, \mu \overset{k}{\to} \lambda}} (-1)^{\height(\lambda \setminus \mu)} \schur_\lambda.
\end{align*}
\end{thm}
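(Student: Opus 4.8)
The plan is to work from the bialternant definition of Schur functions (Definition~\ref{4_defn_Schur}) and to reduce the statement to a sign-tracking computation on strictly decreasing exponent sequences. Since both sides are symmetric functions, homogeneous of degree $|\mu| + k$, it suffices to verify the identity after specializing to a set $\calX$ of $n$ pairwise distinct variables with $n \geq |\mu| + k$; indeed, a symmetric function of degree $d$ is determined by its image in any $n \geq d$ variables, as the monomial symmetric polynomials of that degree then remain linearly independent. Write $\ell_j = \mu_j + n - j$ for $1 \leq j \leq n$, so that $\ell_1 > \ell_2 > \dots > \ell_n \geq 0$, and set $a_\ell(\calX) = \det\left(x^{\ell_j}\right)_{x \in \calX,\, 1 \leq j \leq n}$, the numerator in $\schur_\mu(\calX) = a_\ell(\calX)/\Delta(\calX)$.

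First I would establish the elementary identity
\[
\power_k(\calX)\, a_\ell(\calX) = \sum_{i=1}^n a_{\ell + k e_i}(\calX),
\]
where $\ell + k e_i$ denotes the sequence obtained from $\ell$ by adding $k$ to its $i$-th entry. This follows by writing $\power_k(\calX) = \sum_{x} x^k$ and redistributing the extra power $x^k$ across the columns of the alternant, and is checked directly by expanding the determinant. Dividing by $\Delta(\calX)$ then yields $\power_k \schur_\mu = \sum_{i=1}^n a_{\ell + k e_i}(\calX)/\Delta(\calX)$, so the task becomes the identification of each surviving summand with a signed Schur function.

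Next I would analyze the individual terms. The sequence $\ell + k e_i$ has the value $\ell_i + k > \ell_i$ in position $i$; if this coincides with another $\ell_j$, the corresponding alternant vanishes, so only those $i$ for which $\ell_i + k$ is distinct from all remaining entries contribute. For such a term, reordering $\ell + k e_i$ into strictly decreasing order costs a sign $(-1)^{i - j}$, where $j \leq i$ is the position into which $\ell_i + k$ settles, and $a_{\ell + k e_i}(\calX)/\Delta(\calX)$ then equals $(-1)^{i-j}\schur_\lambda(\calX)$, with $\lambda$ read off from the sorted exponent tuple. The decisive step is to interpret this move through the abacus (bead) description of partitions: replacing the strict set $\{\ell_1, \dots, \ell_n\}$ by $\{\ell_1,\dots,\ell_n\} \setminus \{\ell_i\} \cup \{\ell_i + k\}$ is exactly the displacement of one bead upward by $k$ positions, which is precisely the operation of adjoining a $k$-ribbon to $\mu$. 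Under this correspondence the partitions $\lambda$ that arise are exactly those with $\mu \overset{k}{\to} \lambda$, and the number $i - j$ of beads skipped equals one less than the number of rows of the ribbon, so that $(-1)^{i-j} = (-1)^{\height(\lambda \setminus \mu)}$.

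The main obstacle is this last combinatorial translation: one must verify carefully that inserting $\ell_i + k$ into the decreasing sequence corresponds geometrically to adding a connected skew diagram containing no $2 \times 2$ block, and that the count of skipped beads matches the height statistic $\height(\lambda \setminus \mu)$ defined at the end of Section~\ref{4_sec_sequences_and_partitions}. Once this bijection between bead moves and $k$-ribbon additions, together with its sign, is pinned down, collecting the surviving terms gives exactly $\sum_{\lambda:\, \mu \overset{k}{\to} \lambda} (-1)^{\height(\lambda \setminus \mu)}\schur_\lambda$, which completes the proof.
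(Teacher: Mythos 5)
The paper itself does not prove Theorem~\ref{4_thm_MN_rule}: it is imported as a classical result, with references to Murnaghan and Nakayama, and then used as a black box throughout Section~\ref{4_sec_MN_rule}. So there is no internal proof to compare against; judged on its own, your proposal is the standard classical proof via alternants and the abacus, and its outline is sound. The reduction to $n \geq |\mu| + k$ pairwise distinct variables is legitimate (both sides are homogeneous of degree $|\mu| + k$, every partition $\lambda$ appearing on the right has $l(\lambda) \leq |\lambda| \leq n$, and polynomial identities proved on the dense set of distinct tuples extend to all specializations); the identity $\power_k(\calX)\, a_\ell(\calX) = \sum_{i=1}^n a_{\ell + k e_i}(\calX)$ is checked exactly as you say by expanding the determinant and regrouping; the vanishing of collided terms and the re-sorting sign $(-1)^{i-j}$ are both right. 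The one step you assert rather than prove --- that replacing $\ell_i$ by $\ell_i + k$ in the set $\{\mu_j + n - j\}$ corresponds bijectively to adjoining a $k$-ribbon to $\mu$, with the number $i - j$ of entries jumped over equal to $\height(\lambda \setminus \mu)$ in the paper's sense of one less than the number of rows --- is precisely where all the content of the theorem sits. A complete write-up must verify that the resulting skew shape is edgewise connected, contains no $2 \times 2$ block, and that \emph{every} $k$-ribbon addition to $\mu$ arises from exactly one such move, so that no terms of the claimed sum are missed. This bead-to-ribbon correspondence is a genuine classical lemma (the abacus model; see James--Kerber or Macdonald, Ch.~I), and your statement of it, including the height count, is correct, so the plan closes once that lemma is written out.
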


The following Corollary demonstrates what a powerful tool the theory of specializations of the symmetric group can be. It delivers the generalization of the Murnaghan-Nakayama rule to Littlewood-Schur almost for free. Neither the statement nor its proof is new but we did not manage to find an exact reference in the literature.

\begin{cor} [Murnaghan-Nakayama rule for Littlewood-Schur functions] \label{4_cor_MN_for_LS} Let $\mu$ be a partition. For any strictly positive integer $k$,
\begin{align} \label{4_cor_MN_for_LS_eq}
LS_\mu(\calX; \calY) \left[ \power_k(\calX) + (-1)^{k - 1} \power_k(\calY) \right] = \sum_{\substack{\lambda: \, \mu \overset{k}{\to} \lambda}} (-1)^{\height(\lambda \setminus \mu)} LS_\lambda(\calX; \calY).
\end{align} 
\end{cor}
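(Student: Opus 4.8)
The plan is to obtain this identity by applying a single specialization to the classical Murnaghan-Nakayama rule of Theorem~\ref{4_thm_MN_rule}. The crucial observation is that this rule, as stated, is an equality in the ring of symmetric functions $\Sym$ — it refers to no particular set of variables — so we are free to evaluate both sides under any algebra homomorphism $\Sym \to \C$, in particular under a suitable union of specializations.

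First I would recall from Remark~\ref{4_rem_LS_as_specialization} that every Littlewood-Schur function arises as the image of a Schur function under the specialization $\rho = \rho^\alpha_\calX \cup \rho^\beta_\calY$, namely $\schur_\lambda(\rho) = LS_\lambda(\calX; \calY)$ for all partitions $\lambda$. Applying $\rho$ to the identity $\power_k \schur_\mu = \sum_{\lambda: \, \mu \overset{k}{\to} \lambda} (-1)^{\height(\lambda \setminus \mu)} \schur_\lambda$ and using that $\rho$ is multiplicative turns the right-hand side into $\sum_{\lambda: \, \mu \overset{k}{\to} \lambda} (-1)^{\height(\lambda \setminus \mu)} LS_\lambda(\calX; \calY)$, exactly as desired, and the left-hand side into the product $\power_k(\rho) \cdot LS_\mu(\calX; \calY)$.

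It then remains only to identify the scalar $\power_k(\rho)$. By the definition of the union of specializations we have $\power_k(\rho) = \power_k(\rho^\alpha_\calX) + \power_k(\rho^\beta_\calY)$. Here $\power_k(\rho^\alpha_\calX) = \power_k(\calX)$ by definition of $\rho^\alpha_\calX$, while $\power_k(\rho^\beta_\calY) = \omega(\power_k)(\calY) = (-1)^{k-1} \power_k(\calY)$ using the recorded action $\omega(\power_k) = (-1)^{k-1}\power_k$ on power sums. This produces precisely the bracket $\power_k(\calX) + (-1)^{k-1}\power_k(\calY)$ appearing in the statement, which finishes the argument.

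I do not expect a genuine obstacle here: the whole content is the observation that the Murnaghan-Nakayama rule already lives at the level of symmetric functions and hence passes through the specialization $\rho^\alpha_\calX \cup \rho^\beta_\calY$ verbatim, the only bookkeeping being the sign $(-1)^{k-1}$ that $\omega$ contributes on the $\calY$-side. The one point worth stressing in the write-up is that the identity of Theorem~\ref{4_thm_MN_rule} must be read as an equality in $\Sym$ rather than between polynomials in a fixed alphabet, since this is what legitimizes applying an arbitrary specialization. Arguing instead from the combinatorial or determinantal definition of $LS_\lambda$ would force one to reconstruct the ribbon combinatorics by hand; the specialization viewpoint avoids this entirely, which is why the result comes, as the author puts it, almost for free.
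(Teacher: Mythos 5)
Your proposal is correct and is essentially the paper's own proof: both arguments view $LS_\lambda(\calX;\calY)$ as $\schur_\lambda\left(\rho^\alpha_\calX \cup \rho^\beta_\calY\right)$ via Remark~\ref{4_rem_LS_as_specialization} and then push the Murnaghan-Nakayama rule, read as an identity in $\Sym$, through this specialization, with the bracket $\power_k(\calX) + (-1)^{k-1}\power_k(\calY)$ arising exactly as you compute from $\power_k(\rho_1 \cup \rho_2) = \power_k(\rho_1) + \power_k(\rho_2)$ and $\omega(\power_k) = (-1)^{k-1}\power_k$. The only cosmetic difference is direction: the paper starts from the right-hand side and recognizes it as the specialization of $\power_k\schur_\mu$, whereas you specialize the rule directly, which amounts to the same thing.
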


\begin{proof} View the Littlewood-Schur functions $LS_\lambda(\calX; \calY)$ on the right-hand side in \eqref{4_cor_MN_for_LS_eq} as specializations of $\schur_\lambda$, following Remark~\ref{4_rem_LS_as_specialization}. Then use the Murnaghan-Nakayama rule to write the resulting sum as a specialization of $\power_k \schur_\lambda$, which equals the left-hand side in \eqref{4_cor_MN_for_LS_eq} when written out.
\end{proof}

On the one hand, the left-hand side of the Murnaghan-Nakayama rule can be viewed as a product of a power sum and a Schur function. On the other hand, it can be seen as applying the product operator to a Schur function. The second point of view immediately leads to the question whether there is a similar ``rule'' for the derivation operator.

\begin{cor} [dual Murnaghan-Nakayama rule] \label{4_cor_dual_MN_rule} Let $\lambda$ be a partition. For any strictly positive integer $k$,
\begin{align} \label{4_cor_dual_MN_rule_eq}
k \frac{\partial}{\partial \power_k} \schur_\lambda = \sum_{\substack{\mu: \, \mu \overset{k}{\to} \lambda}} (-1)^{\height(\lambda \setminus \mu)} \schur_\mu.
\end{align}
\end{cor}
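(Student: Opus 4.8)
The plan is to read the claimed identity as the statement dual to the Murnaghan--Nakayama rule, and to obtain it essentially for free from the near-adjointness of the product and derivation operators recorded in Lemma~\ref{4_lem_properties_operators}(2). The key structural fact is that the Schur functions form an orthonormal basis of $\Sym$ (a consequence of the Cauchy identity in Lemma~\ref{4_lem_cauchy_identity_schur}). Hence any symmetric function $f$ admits the expansion $f = \sum_\mu \langle f, \schur_\mu \rangle \schur_\mu$, and to establish the identity it is enough to show that for every partition $\mu$ the coefficient $\left\langle k \frac{\partial}{\partial \power_k} \schur_\lambda, \schur_\mu \right\rangle$ equals $(-1)^{\height(\lambda \setminus \mu)}$ when $\mu \overset{k}{\to} \lambda$ and vanishes otherwise.

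First I would invoke the adjointness relation $\left\langle k \frac{\partial}{\partial \power_k} f, g \right\rangle = \left\langle f, \power_k g \right\rangle$ with $f = \schur_\lambda$ and $g = \schur_\mu$, so that the derivation operator is traded for ordinary multiplication by $\power_k$ on the other side of the inner product:
\begin{align*}
\left\langle k \frac{\partial}{\partial \power_k} \schur_\lambda, \schur_\mu \right\rangle = \left\langle \schur_\lambda, \power_k \schur_\mu \right\rangle.
\end{align*}
Next I would expand $\power_k \schur_\mu$ using the Murnaghan--Nakayama rule (Theorem~\ref{4_thm_MN_rule}), which rewrites the right-hand side as $\sum_{\nu:\, \mu \overset{k}{\to} \nu} (-1)^{\height(\nu \setminus \mu)} \left\langle \schur_\lambda, \schur_\nu \right\rangle$. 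Applying orthonormality a second time collapses this sum to a single term: it survives only for $\nu = \lambda$, and only when $\mu \overset{k}{\to} \lambda$, in which case it contributes $(-1)^{\height(\lambda \setminus \mu)}$. Reassembling $k \frac{\partial}{\partial \power_k} \schur_\lambda$ from these coefficients yields exactly the right-hand side of \eqref{4_cor_dual_MN_rule_eq}.

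There is essentially no analytic or combinatorial obstacle here; the entire content is the duality between the two power-sum operators, and the Murnaghan--Nakayama rule already supplies the combinatorics. The only point that deserves care is bookkeeping the sign: the ribbon whose height governs the sign is $\lambda \setminus \mu$ on both sides, so the height statistic transfers unchanged and no conjugation identity such as \eqref{4_eq_height_conjugate_ribbon} is needed. I would double-check that the orthonormality step is applied in the correct direction --- using $\left\langle \schur_\lambda, \schur_\nu \right\rangle = \delta_{\lambda\nu}$ to select $\nu = \lambda$ --- to be sure the summation index on the final answer ranges over $\mu$ with $\mu \overset{k}{\to} \lambda$ rather than over $\nu$ with $\lambda \overset{k}{\to} \nu$.
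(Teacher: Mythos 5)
Your proposal is correct and follows exactly the paper's own argument: expand $k \tfrac{\partial}{\partial \power_k} \schur_\lambda$ in the orthonormal Schur basis, trade the derivation operator for multiplication by $\power_k$ via the near-adjointness in Lemma~\ref{4_lem_properties_operators}, apply the Murnaghan--Nakayama rule (Theorem~\ref{4_thm_MN_rule}), and collapse the double sum by orthonormality. Your closing remarks about the sign bookkeeping and the direction of the orthonormality step are also consistent with the paper's proof, which needs no conjugation identity either.
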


The statement we have chosen to call the dual Murnaghan-Nakayama rule is standard but very elusive in the literature.

\begin{proof} Exploit that Schur functions form an orthonormal basis of $\Sym$ to write the left-hand side in \eqref{4_cor_dual_MN_rule_eq} as a linear combination of Schur functions:
\begin{align*}
k \frac{\partial}{\partial \power_k} \schur_\lambda ={} & \sum_{\mu} \left\langle k \frac{\partial}{\partial \power_k} \schur_\lambda, \schur_\mu \right\rangle \schur_\mu
.
\intertext{Using that derivation and product are almost adjoint (\textit{i.e.}\ Lemma~\ref{4_lem_properties_operators}), switch operators, and then apply the Murnaghan-Nakayama rule:}
k \frac{\partial}{\partial \power_k} \schur_\lambda ={} & \sum_{\mu} \left\langle \schur_\lambda, \power_k \schur_\mu \right\rangle \schur_\mu = \sum_{\mu} \sum_{\substack{\nu: \, \mu \overset{k}{\to} \nu}} (-1)^{\height(\nu \setminus \mu)} \left\langle \schur_\lambda, \schur_\nu \right\rangle \schur_\mu
.
\end{align*}  
The equality in \eqref{4_cor_dual_MN_rule_eq} now follows from the orthonormality of Schur functions.
\end{proof}

The derivation operator thus allows us to give a neat expression for the signed sum of Schur functions associated to $\mu$ where $\mu$ ranges over all partitions so that $\lambda \setminus \mu$ is a $k$-ribbon. However, this expression can be difficult to work with because for a general symmetric polynomial $f(\calX)$ it is hard to give an explicit expression for $\displaystyle \tfrac{\partial}{\partial \power_k} f \left( \rho^\alpha_\calX \right)$. The following proposition solves this problem under very specific assumptions, which will turn out to be sufficient for our purposes.

\begin{defn} Let $\calX$ be a set of non-zero variables. For any partition $\lambda$, we define the $-\lambda$-th power sum of $\calX$ by
$$\power_{-\lambda}(\calX) = \power_\lambda \left( \calX^{-1} \right)
.$$ 
We remark that $\power_{-\lambda}(\calX)$ is \emph{not} a symmetric polynomial in $\calX$, but a symmetric Laurent polynomial. 
\end{defn}

\begin{prop} \label{4_prop_MN_negative_r} Let $\calX$ consist of $n$ non-zero variables and let $\mu$ be a partition of length $n$. For any integer $k$ with $1 \leq k \leq \mu_n$,
\begin{align} \label{4_prop_MN_negative_r_eq}
\schur_\mu (\calX) \power_{-k} (\calX) = \sum_{\substack{\lambda: \, \lambda \overset{k}{\to} \mu}} (-1)^{\height(\mu \setminus \lambda)} \schur_\lambda(\calX).
\end{align}
\end{prop}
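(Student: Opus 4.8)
The plan is to deduce the statement from the ordinary Murnaghan--Nakayama rule (Theorem~\ref{4_thm_MN_rule}) by passing to complementary partitions inside a rectangle. Fix an integer $m \geq \mu_1$, so that $\mu \subset \langle m^n \rangle$ and the $(m,n)$-complement $\tilde\mu$ is defined. The bridge between the two sides is the elementary identity
\begin{align*}
\schur_\lambda(\calX^{-1}) = \elementary(\calX)^{-m}\, \schur_{\tilde\lambda}(\calX) \qquad (\lambda \subset \langle m^n \rangle),
\end{align*}
which I would derive directly from the bialternant formula in Definition~\ref{4_defn_Schur}: factoring $x^{-(m+n-1)}$ out of each row of $\det\left(x^{-(\lambda_j + n - j)}\right)_{x \in \calX,\, 1 \leq j \leq n}$ and reversing the order of the columns turns it into $(-1)^{\binom{n}{2}}\elementary(\calX)^{-(m+n-1)}\det\left(x^{\tilde\lambda_j + n - j}\right)_{x \in \calX,\, 1 \leq j \leq n}$, while $\Delta(\calX^{-1}) = (-1)^{\binom{n}{2}} \elementary(\calX)^{-(n-1)} \Delta(\calX)$; dividing and collecting the powers of $\elementary(\calX)$ yields the relation.

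Using this identity with $\lambda = \tilde\mu$ gives $\schur_\mu(\calX) = \elementary(\calX)^{m}\, \schur_{\tilde\mu}(\calX^{-1})$, so that
\begin{align*}
\schur_\mu(\calX)\, \power_{-k}(\calX) = \elementary(\calX)^{m}\, \schur_{\tilde\mu}(\calX^{-1})\, \power_k(\calX^{-1}).
\end{align*}
Now I would apply the ordinary Murnaghan--Nakayama rule to $\power_k\, \schur_{\tilde\mu}$ and specialize the resulting identity in $\Sym$ to the $n$ inverse variables $\calX^{-1}$. Terms indexed by $\kappa$ with $l(\kappa) > n$ vanish upon this specialization, leaving
\begin{align*}
\schur_{\tilde\mu}(\calX^{-1})\, \power_k(\calX^{-1}) = \sum_{\substack{\kappa:\, \tilde\mu \overset{k}{\to} \kappa \\ l(\kappa) \leq n}} (-1)^{\height(\kappa \setminus \tilde\mu)}\, \schur_\kappa(\calX^{-1}).
\end{align*}
Multiplying through by $\elementary(\calX)^{m}$ and converting each $\schur_\kappa(\calX^{-1})$ back via the complement identity turns the summand into $(-1)^{\height(\kappa \setminus \tilde\mu)} \schur_{\tilde\kappa}(\calX)$.

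It remains to match this against the right-hand side of \eqref{4_prop_MN_negative_r_eq}, and this is where the hypothesis $k \leq \mu_n$ is used. Since $\tilde\mu_1 = m - \mu_n$ and a $k$-ribbon adds at most $k$ boxes to the first row, every $\kappa$ occurring in the sum satisfies $\kappa_1 \leq m - \mu_n + k \leq m$; hence $\kappa \subset \langle m^n \rangle$ and $\tilde\kappa$ is a genuine partition to which the complement identity applies. The assignment $\kappa \mapsto \lambda = \tilde\kappa$ is then a bijection from $\{\kappa : \tilde\mu \overset{k}{\to} \kappa,\ l(\kappa) \leq n\}$ onto $\{\lambda : \lambda \overset{k}{\to} \mu\}$: complementation inside the rectangle is a $180^\circ$ rotation of the complementary region, so it exchanges $\kappa \setminus \tilde\mu$ with $\mu \setminus \tilde\kappa$, carrying $k$-ribbons to $k$-ribbons and, since the number of rows is preserved, leaving the height unchanged, i.e.\ $\height(\kappa \setminus \tilde\mu) = \height(\mu \setminus \lambda)$. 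Substituting produces exactly $\sum_{\lambda:\, \lambda \overset{k}{\to} \mu} (-1)^{\height(\mu \setminus \lambda)} \schur_\lambda(\calX)$.

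The main obstacle I anticipate is precisely this last bookkeeping step: one must verify that no $\kappa$ escapes the right edge of the rectangle (which is exactly what $k \leq \mu_n$ guarantees, and what makes $\tilde\kappa$ a legitimate partition) and that the height statistic is genuinely invariant under the complementation involution, not merely the ribbon property. Everything else reduces to the ordinary Murnaghan--Nakayama rule together with a bialternant computation. Finally, since all the manipulations are identities of Laurent polynomials valid whenever the variables are distinct and non-zero, the general case of non-zero variables follows because both sides agree on a Zariski-dense set.
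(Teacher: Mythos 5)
Your proposal is correct and follows essentially the same route as the paper's own proof: both rewrite $\schur_\mu(\calX)\power_{-k}(\calX)$ as $\elementary(\calX)^m \schur_{\tilde\mu}(\calX^{-1})\power_k(\calX^{-1})$ via the $(m,n)$-complement identity, apply the ordinary Murnaghan--Nakayama rule, use $k \leq \mu_n$ to keep the resulting partitions inside the rectangle $\langle m^n\rangle$, and conclude by the complementation bijection that preserves ribbons and their heights. The only difference is that you spell out the details (the bialternant computation, the $180^\circ$-rotation argument, and the extension to possibly repeated variables) that the paper dismisses as immediate.
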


\begin{proof} Choose an integer $m$ such that $\mu$ is contained in the rectangle $\langle m^n \rangle$. Let $\tilde \mu$ denote the $(m,n)$-complement of $\mu$. We reformulate the left-hand side of the equation in \eqref{4_prop_MN_negative_r_eq} as a function in the variables $\calX^{-1}$:
\begin{align*}
\schur_\mu (\calX) \power_{-k} (\calX) ={} & \elementary\left(\calX\right)^{m} \schur_{\tilde \mu} \left(\calX^{-1}\right) \power_k \left(\calX^{-1}\right)
.
\intertext{This trick is an immediate consequence of the determinantal definition for Schur functions. Applying the Murnaghan-Nakayama rule yields}
\schur_\mu (\calX) \power_{-k} (\calX) ={} & \elementary \left(\calX \right)^{m} \sum_{\substack{\nu: \, \tilde\mu \overset{k}{\to} \nu}} (-1)^{\height(\nu \setminus \tilde\mu)} \schur_\nu \left(\calX^{-1}\right)
.
\intertext{Notice that $\nu_1 \leq \tilde\mu_1 + k \leq \tilde\mu_1 + \mu_n = m$. Hence, all partitions $\nu$ that contribute to the sum are contained in $\langle m^n \rangle$. In consequence, the $(m,n)$-complement of $\nu$ is well defined. Replace $\lambda$ by $\tilde\nu$ in the summation index and reuse the trick to obtain}
\schur_\mu (\calX) \power_{-k} (\calX) ={} & \sum_{\substack{\lambda: \, \tilde\mu \overset{k}{\to} \tilde\lambda}} (-1)^{\height\left(\tilde\lambda \setminus \tilde\mu\right)} \schur_\lambda(\calX)
.
\end{align*}
It is easy to see that $\tilde\lambda \setminus \tilde\mu$ is a $k$-ribbon if and only if $\mu \setminus \lambda$ is. Together with the fact that the height remains unaltered this proves the claim.
\end{proof}

\begin{cor} \label{4_cor_MN_negative_lambda} Let $\calX$ consist of $n$ non-zero variables and let $\mu$ be a partition of length at most $n$. For any partition $\lambda$ with $|\lambda| \leq \mu_n$,
\begin{align} \label{4_cor_MN_negative_lambda_eq}
\left[ \prod_{i \geq 1} i^{m_i(\lambda)} \frac{\partial}{\partial \power_\lambda} \schur_\mu \right] \left(\rho^\alpha_\calX\right) = \schur_\mu(\calX) \power_{-\lambda}(\calX)
.
\end{align}
\end{cor}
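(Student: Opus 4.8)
The plan is to settle the single-part case $\lambda = (k)$ first and then bootstrap to arbitrary $\lambda$ by iteration. Write $D_k$ for the operator $k \tfrac{\partial}{\partial \power_k}$. The dual Murnaghan--Nakayama rule (Corollary~\ref{4_cor_dual_MN_rule}) gives the identity
\[
D_k \schur_\kappa = \sum_{\substack{\sigma:\, \sigma \overset{k}{\to} \kappa}} (-1)^{\height(\kappa \setminus \sigma)} \schur_\sigma
\]
in $\Sym$, with no restriction on $\kappa$ or $k$. Applying the specialization $\rho^\alpha_\calX$ to the right-hand side and comparing with Proposition~\ref{4_prop_MN_negative_r}, whose right-hand side is precisely this same ribbon sum evaluated at $\calX$, shows that $[D_k \schur_\kappa](\calX) = \schur_\kappa(\calX)\, \power_{-k}(\calX)$ whenever $l(\kappa) = n$ and $1 \le k \le \kappa_n$. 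This is the case $\lambda = (k)$ of the claim.

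Two bookkeeping observations make the iteration clean. First, $\prod_{i \ge 1} i^{m_i(\lambda)} = \prod_{j} \lambda_j$, so, by the definition of $\tfrac{\partial}{\partial \power_\lambda}$ as a composition, $\prod_{i} i^{m_i(\lambda)} \tfrac{\partial}{\partial \power_\lambda} = D_{\lambda_1} D_{\lambda_2} \cdots D_{\lambda_r}$; and since the $\tfrac{\partial}{\partial \power_k}$ are genuine partial derivatives in the algebraically independent generators $\power_1, \power_2, \dots$, all the operators $D_k$ commute. Second, if $l(\mu) < n$ then $\mu_n = 0$, forcing $\lambda = \emptyset$ and making the statement trivial; hence I may assume $l(\mu) = n$ and $\mu_n \ge 1$.

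The heart of the argument is to expand \emph{both} sides as the same signed sum over chains of ribbon removals. Starting from $\schur_\mu(\calX)\, \power_{-\lambda}(\calX) = \schur_\mu(\calX)\, \power_{-\lambda_1}(\calX) \cdots \power_{-\lambda_r}(\calX)$, I would peel off the factors from the left, invoking Proposition~\ref{4_prop_MN_negative_r} at each step. This produces a sum over chains $\mu = \kappa^{(0)}$ with $\kappa^{(i)} \overset{\lambda_i}{\to} \kappa^{(i-1)}$ for $1 \le i \le r$, weighted by $(-1)^{\height(\mu \setminus \kappa^{(r)})}$ and terminating in $\schur_{\kappa^{(r)}}(\calX)$; that is, the ribbon sizes are consumed in the order $\lambda_1, \dots, \lambda_r$. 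On the operator side, I would apply the dual Murnaghan--Nakayama rule to the reordered composition $D_{\lambda_r} \cdots D_{\lambda_1}$ (innermost operator $D_{\lambda_1}$ first) and then specialize; this yields literally the same chain sum, consuming the ribbon sizes in the identical order. Because the $D_k$ commute, $D_{\lambda_r} \cdots D_{\lambda_1} = D_{\lambda_1} \cdots D_{\lambda_r} = \prod_{i} i^{m_i(\lambda)} \tfrac{\partial}{\partial \power_\lambda}$, so the two expansions agree term by term and the asserted equality follows.

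The step that will require the most care is checking that Proposition~\ref{4_prop_MN_negative_r} may legitimately be invoked at every stage of the peeling, since it demands both that the current partition have length exactly $n$ and that the ribbon size be at most its last part. Removing ribbons totalling $\lambda_1 + \cdots + \lambda_i$ boxes from $\mu$ decreases the bottom row by at most $\lambda_1 + \cdots + \lambda_i$, so $(\kappa^{(i)})_n \ge \mu_n - (\lambda_1 + \cdots + \lambda_i) \ge \lambda_{i+1} + \cdots + \lambda_r$. The hypothesis $|\lambda| \le \mu_n$ is exactly what guarantees $(\kappa^{(i)})_n \ge \lambda_{i+1} \ge 1$, ensuring that $\kappa^{(i)}$ still has length $n$ and admits removal of a $\lambda_{i+1}$-ribbon. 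This is the one place where the constraint $|\lambda| \le \mu_n$ is indispensable, and it is where I would spend the bulk of the verification.
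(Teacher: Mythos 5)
Your proof is correct and follows essentially the same route as the paper's: both expand the operator side via repeated application of the dual Murnaghan--Nakayama rule (Corollary~\ref{4_cor_dual_MN_rule}) into a signed sum over chains of ribbon removals, and identify that chain sum with $\schur_\mu(\calX)\power_{-\lambda}(\calX)$ via Proposition~\ref{4_prop_MN_negative_r}, using exactly the inequality $\lambda_i \leq |\lambda| - \lambda_1 - \dots - \lambda_{i-1} \leq \mu_n - \lambda_1 - \dots - \lambda_{i-1} \leq \mu^{(i-1)}_n$ to justify each application. The only differences are organizational (you expand both sides to a common middle rather than expanding and then contracting) plus your explicit handling of the trivial case $l(\mu) < n$ and of operator commutativity, which the paper leaves implicit.
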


\begin{proof} Set $l(\lambda) = l$. Repeated application of Corollary~\ref{4_cor_dual_MN_rule} to the left-hand side in \eqref{4_cor_MN_negative_lambda_eq} allows us to reformulate it as
\begin{align*}
\sum_{\substack{\mu^{(1)}, \dots, \mu^{(l)}: \\ \mu^{(l)} \overset{\lambda_l}{\to} \dots \overset{\lambda_2}{\to} \mu^{(1)} \overset{\lambda_1}{\to} \mu}} (-1)^{\height \left( \mu \setminus \mu^{(l)} \right)} \schur_{\mu^{(l)}} \left( \rho^\alpha_\calX \right)
.
\end{align*}
By definition, $\schur_{\mu^{(l)}} \left( \rho^\alpha_\calX \right) = \schur_{\mu^{(l)}} \left(\calX \right)$. Given that
\begin{align*}
\lambda_i \leq |\lambda| - \lambda_{i - 1} - \dots - \lambda_1 \leq \mu_n - \lambda_{i - 1} - \dots - \lambda_1 \leq \mu^{(i - 1)}_n
\end{align*}
for all $1 \leq i \leq l$, repeatedly applying Proposition~\ref{4_prop_MN_negative_r} results in the right-hand side of the equation in \eqref{4_cor_MN_negative_lambda_eq}.
\end{proof}

\section{Averages of mixed ratios of characteristic polynomials} \label{4_sec_ratios_and_log_der}

In this section we present a unified way to derive formulas for averages of products of ratios and/or logarithmic derivatives of characteristic polynomials over the group of unitary matrices $U(N)$. Most of these formulas are not exact but contain an error that decreases exponentially as $N$ goes to infinity.

\subsection{Tricks for bounding the error term}
As the heading suggests, this section is a collection of observations that will allow us to give asymptotic bounds for the various error terms. They are not particularly hard to prove or interesting in their own right.

\begin{lem} \label{4_lem_bound_number_of_ribbons_rectangle} Fix a positive integer $k$ and a partition $\lambda \subset \langle m^n \rangle$. Then there are at most $\min \{m,n\}$ partitions $\mu$ such that $\begin{cases} \text{$\lambda \setminus \mu$ is a $k$-ribbon.} \\ \text{$\mu \setminus \lambda$ is a $k$-ribbon and $\mu \subset \langle m^n \rangle$.} \end{cases}$
\end{lem}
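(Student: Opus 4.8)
The plan is to encode partitions with at most $n$ parts by their beta-sets (equivalently, by an abacus carrying $n$ beads) and to reinterpret the addition and removal of a $k$-ribbon as the sliding of a single bead by $k$ positions, thereby establishing the claimed bound separately for each of the two families of $\mu$ described in the statement. Concretely, to a partition $\kappa$ with at most $n$ parts I associate the strictly decreasing sequence of beta-numbers $\beta_i(\kappa) = \kappa_i + n - i$ for $1 \le i \le n$, and I view $\{\beta_1(\kappa), \dots, \beta_n(\kappa)\}$ as the set of occupied positions (``beads'') among $\{0, 1, 2, \dots\}$. The classical abacus description of border strips says that removing a $k$-ribbon from $\kappa$ amounts to choosing a bead at a position $b$ with $b - k \ge 0$ empty and sliding it to $b - k$, while adding a $k$-ribbon amounts to choosing a bead at a position $b$ with $b + k$ empty and sliding it to $b + k$; in either direction the skew shape is a $k$-ribbon precisely when such a move is legal, and distinct ribbons correspond to distinct moves.

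First I would treat the removal case, i.e. the partitions $\mu$ with $\mu \subset \lambda$ and $\lambda \setminus \mu$ a $k$-ribbon. Each such $\mu$ arises from the beta-set of $\lambda$ by sliding one bead left by $k$. Since the target of a bead at position $b$ is forced to be $b - k$, the map sending a removable $k$-ribbon to the bead it moves is injective, so there are at most $n$ such $\mu$. On the other hand, sending the ribbon to its target position $b - k$ is also injective (the moved bead $b$ is recovered as the target plus $k$), and every target is an empty position of $\lambda$ lying in $[0, m + n - 1]$. Because $\lambda \subset \langle m^n \rangle$ forces all $n$ beta-numbers of $\lambda$ into $[0, m + n - 1]$, this window contains exactly $m$ empty positions, whence there are at most $m$ removable ribbons. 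Combining the two injections gives the bound $\min\{m,n\}$ in this case.

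The addition case, where $\lambda \subset \mu \subset \langle m^n \rangle$ and $\mu \setminus \lambda$ is a $k$-ribbon, is handled by the same two injections, now for rightward bead moves $b \mapsto b + k$. Injectivity into the $n$ beads again gives the bound $n$. For the bound $m$, the key point is that the constraint $\mu \subset \langle m^n \rangle$ is exactly what forces each target position $b + k$ -- an occupied position of $\mu$ -- to satisfy $b + k \le m + n - 1$; thus every target is an empty position of $\lambda$ inside the window $[0, m + n - 1]$, of which there are again exactly $m$. This yields $\min\{m,n\}$ for the second family as well.

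I expect the main obstacle to be purely expository: carefully stating the bead-move dictionary for $k$-ribbons, verifying the two injectivity claims, and doing the bookkeeping that keeps all relevant bead targets inside the window $[0, m + n - 1]$ whose complement within the abacus has exactly $m$ gaps; no genuinely hard estimate is involved. As a cross-check, the removal bound also follows from the classical bijection between $k$-ribbons of $\lambda$ and cells of $\lambda$ of hook length $k$: hook lengths strictly decrease along each row and down each column, so at most one such cell occurs per row and per column, giving at most $\min\{n, m\}$; and the addition case then reduces to the removal case for the $(m,n)$-complement $\tilde\lambda$, since the $180^\circ$ rotation of $\langle m^n \rangle$ carries $\mu \setminus \lambda$ to $\tilde\lambda \setminus \tilde\mu$ and preserves $k$-ribbons.
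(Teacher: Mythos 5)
Your proof is correct, and it reaches the bound by what is, at heart, the same double counting as the paper's, but formalized differently and organized differently. The paper only treats the removal case directly: the top-right box of a removable $k$-ribbon must sit at the end of one of the at most $n$ rows of $\lambda$ (bound $n$), and its bottom-left box at the bottom of one of the at most $m$ columns (bound $m$); the addition case is then reduced to the removal case by passing to $(m,n)$-complements --- exactly the reduction you mention in your final cross-check paragraph. You instead invoke the classical beta-set/abacus dictionary and run the two cases in parallel: your ``bead moved'' injection is the abacus avatar of the paper's top-right-box argument, and your ``target gap inside the window $[0, m+n-1]$'' injection (where the constraint $\mu \subset \langle m^n \rangle$ is what confines the target to the window in the addition case) plays the role of the bottom-left-box argument. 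What your route buys is a uniform treatment of removal and addition without the complementation trick, plus injectivity claims that are transparent once the bead-sliding dictionary is granted; what it costs is exactly that dictionary, which you would need to state and justify, whereas the paper's diagram argument is self-contained. Your hook-length cross-check (at most one cell of hook length $k$ per row and per column) is also a valid third proof of the removal bound.
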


\begin{proof} Let $\tilde \lambda$ denote the $(m,n)$-complement of the partition $\lambda$. For every partition $\mu \subset \langle m^n\rangle$, $\mu \setminus \lambda$ is a $k$-ribbon if and only if $\tilde \lambda \setminus \tilde \mu$ is a $k$-ribbon, where $\tilde \mu$ denotes the $(m,n)$-complement of $\mu$. Hence, it is sufficient to bound the number of partitions $\mu$ such that $\lambda \setminus \mu$ is a $k$-ribbon.

The condition that $\mu$ be a partition implies that the top-right box of any ribbon $\lambda \setminus \mu$ must not have any box to its right that is contained in $\lambda$. This gives at most $n$ possible positions for the top-right box, which entails that there are at most $n$ partitions $\mu$ such that $\lambda \setminus \mu$ is a $k$-ribbon. An analogous argument based on the bottom-left box of the ribbons bounds their number by $m$, thus concluding the proof.
\end{proof}

Before going on to the next trick we recall the big-$O$ notation -- primarily to fix notation. Given two functions $f$ and $g$ with domain $\calX$, we write \label{symbol_big_o_notation} $f = O_\calP(g)$ if there exists a real constant $c(\calP)$ that may depend on the set of parameters $\calP$ such that $|f(x)| \leq c(\calP)|g(x)|$ for all $x \in \calX$. In this setting, we call $c(\calP)$ the implicit constant. 

The following notation will also appear in the bounds for the error terms: the positive part of a real number $x$ is denoted by \label{symbol_positive_part} $x^+ = \max\{x,0\}$.

\begin{lem} \label{4_lem_bound_det} Fix a natural number $n$. For all square matrices $A$ whose size is less than $n$,
\begin{enumerate}
\item $\displaystyle \det A = O_n \left( \prod_{j = 1}^m \max_{1 \leq i \leq m} |a_{ij}| \right)$
\item $\displaystyle \det A = O_n \left( \prod_{i = 1}^m \max_{1 \leq j \leq m} |a_{ij}| \right)$
\end{enumerate}
where $m$ denotes the size of $A$.
\end{lem}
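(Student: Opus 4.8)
The plan is to reduce everything to the Leibniz expansion of the determinant and to bound the $m!$ summands one at a time. First I would write
$$\det A = \sum_{\sigma} \pm \prod_{i = 1}^m a_{i \sigma(i)},$$
where $\sigma$ runs over the $m!$ permutations of $\{1, \dots, m\}$ and each sign is the sign of $\sigma$; since we only ever take absolute values, these signs are irrelevant. Every summand is a product of exactly $m$ matrix entries, one drawn from each row and one from each column, so the two parts of the statement will arise from grouping the factors by column (part~1) or by row (part~2).

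For part~1, I would fix $\sigma$ and use that it is a bijection, so the product visits each column index exactly once. Reindexing by $j = \sigma(i)$ gives $\prod_{i = 1}^m a_{i \sigma(i)} = \prod_{j = 1}^m a_{\sigma^{-1}(j), j}$. Passing to absolute values and bounding each factor by the largest entry in its column yields
$$\left| \prod_{i = 1}^m a_{i \sigma(i)} \right| \leq \prod_{j = 1}^m \max_{1 \leq i \leq m} |a_{ij}|,$$
a bound that is independent of $\sigma$. Summing over all $m!$ permutations then produces $|\det A| \leq m! \prod_{j = 1}^m \max_{1 \leq i \leq m} |a_{ij}|$. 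Part~2 is entirely analogous: grouping the factors by row rather than by column gives the symmetric estimate $\left| \prod_{i = 1}^m a_{i \sigma(i)} \right| \leq \prod_{i = 1}^m \max_{1 \leq j \leq m} |a_{ij}|$, and summing produces the corresponding bound.

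The only point that requires attention — and it is precisely the reason the framing fixes $n$ in advance — is that the combinatorial prefactor $m!$ must be absorbed into an implicit constant depending on $n$ alone, and not on the particular matrix $A$. This is immediate from the hypothesis that the size of $A$ is less than $n$: then $m! \leq (n - 1)!$, a quantity depending on $n$ only, so $m! = O_n(1)$ and both estimates take the claimed $O_n$ form. I do not expect any genuine obstacle here; the statement is a routine consequence of the Leibniz formula, and the hardest part is merely keeping track of the fact that the implicit constant stays uniform in the size $m$.
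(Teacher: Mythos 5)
Your proposal is correct and is essentially identical to the paper's proof: both expand $\det A$ via the Leibniz formula, bound each of the $m!$ summands by the product of columnwise (resp.\ rowwise) maxima, and absorb the permutation count into an $O_n(1)$ constant using $m < n$. The only cosmetic difference is that the paper writes the Leibniz product as $\prod_{j} a_{\sigma(j)j}$ so that no reindexing step is needed.
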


\begin{proof} Both statements follow directly from the Leibniz formula for determinants. We only give a justification for the first statement, as they are exact analogues. We have that
\begin{align*}
\hspace{19.4pt} |\det A| = \left| \sum_{\sigma \in S_m} \varepsilon(\sigma) \prod_{j = 1}^m a_{\sigma(j)j} \right| 
\leq \sum_{\sigma \in S_m} \prod_{j = 1}^m \max_{1 \leq i \leq m} \left| a_{ij} \right| \leq n! \prod_{j = 1}^m \max_{1 \leq i \leq m} \left| a_{ij} \right|
. \hspace{19.4pt}
\qedhere
\end{align*}
\end{proof}

We will use this lemma to infer asymptotic bounds for Schur and Littlewood-Schur functions based on their determinantal definitions.

\begin{lem} \label{4_lem_det_bound_Schur} Fix a positive number $r$ and a set $\calX$ of pairwise distinct variables.
\begin{enumerate} 
\item If $\abs(\calX) \leq r$, then $\displaystyle \schur_\lambda(\calX) = O_\calX \left( r^{|\lambda|} \right)$ as a function of $\lambda$.
\item If $\calY$ is the subsequence of $\calX$ that consists of the elements of absolute value greater than 1, $\displaystyle \schur_\lambda(\calX) = O_\calX \left( \elementary (\calY)^{\lambda_1} \right)$ as a function of $\lambda$.
\end{enumerate}
\end{lem}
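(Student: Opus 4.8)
The plan is to prove both bounds directly from the determinantal definition of Schur functions (Definition~\ref{4_defn_Schur}), feeding the matrix entries into the two variants of Lemma~\ref{4_lem_bound_det}. Recall that for a set $\calX$ of $n$ pairwise distinct variables,
\begin{align*}
\schur_\lambda(\calX) = \frac{\det \left( x^{\lambda_j + n - j} \right)_{x \in \calX, 1 \leq j \leq n}}{\Delta(\calX)}.
\end{align*}
Since $\calX$ is fixed, $\Delta(\calX)$ is a fixed nonzero constant that we may absorb into the implicit constant, so it suffices to bound the numerator determinant as a function of $\lambda$. The matrix has size $n$, which is fixed, so Lemma~\ref{4_lem_bound_det} applies with the implicit constant depending only on $n$ (hence on $\calX$). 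The whole proof is essentially a matter of estimating the maximal entry in each row or column of this matrix.

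First I would prove statement (1). Here I would apply part (1) of Lemma~\ref{4_lem_bound_det}, which bounds the determinant by $\prod_{j=1}^n \max_{x \in \calX} |x^{\lambda_j + n - j}|$. Since every element of $\calX$ has absolute value at most $r$, and the exponents $\lambda_j + n - j$ are nonnegative (as $\lambda_j \geq 0$ and $j \leq n$), each factor is at most $\max\{r,1\}^{\lambda_j + n - j}$. The contribution of the $n - j$ terms is bounded by a constant depending only on $n$ and $r$, while the $\lambda_j$ terms multiply to give $\max\{r,1\}^{|\lambda|}$. This is not quite $r^{|\lambda|}$ when $r < 1$, but the discrepancy is a ratio $(1/r)^{|\lambda|}$ which is not bounded; I expect the intended reading is that the claimed bound should be interpreted with $r \geq 1$, or else the statement tacitly uses $\max\{r,1\}$. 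I would state the bound as $O_\calX(\max\{r,1\}^{|\lambda|})$ and note that for $r \geq 1$ this is exactly $O_\calX(r^{|\lambda|})$, matching the intended application where the variables lie near the unit circle.

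For statement (2) I would instead use part (1) of Lemma~\ref{4_lem_bound_det} again but estimate each column maximum more carefully, splitting $\calX$ into the subsequence $\calY$ of elements of absolute value greater than $1$ and its complement. For a column indexed by $j$, the maximum of $|x^{\lambda_j + n - j}|$ over $x \in \calX$ is attained either by some $x \in \calY$ (giving a factor that grows like $\elementary(\abs(\calY))$ to a power) or by an element of absolute value at most $1$ (giving a bounded contribution). Since the exponents satisfy $\lambda_1 + n - 1 \geq \lambda_j + n - j$ for all $j$, the largest single factor that can arise involves the exponent $\lambda_1$, and collecting the growth across all columns yields a total power of $\elementary(\calY)$ no larger than $\lambda_1$ times a constant depending on $\calX$. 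The main obstacle is bookkeeping: I must check that the product over columns of the per-column maxima genuinely collapses to $\elementary(\calY)^{\lambda_1}$ up to a constant, rather than to a larger power, which relies on the fact that only the exponents exceeding those forced by the staircase $n - j$ contribute growth, and that these are all dominated by $\lambda_1$. I would carry this out by bounding each $|x|^{\lambda_j}$ for $x \in \calY$ by $|x|^{\lambda_1}$ and each $|x|^{\lambda_j}$ for $\abs(x) \leq 1$ by $1$, then absorbing the staircase exponents $n-j$ and the number of elements of $\calY$ into the $\calX$-dependent implicit constant.
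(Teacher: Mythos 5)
Your treatment of part (1) follows the paper's proof, but the difficulty you raise for $r < 1$ is self-inflicted: there is no reason to pass to $\max\{r,1\}$. Since every exponent $\lambda_j + n - j$ is nonnegative, monotonicity of $t \mapsto t^e$ on $t \geq 0$ gives $|x|^{\lambda_j + n - j} \leq r^{\lambda_j + n - j}$ directly from $|x| \leq r$, for \emph{every} $r > 0$. The product over columns is then $r^{|\lambda|} \cdot r^{n(n-1)/2}$, and $r^{n(n-1)/2}$ is a constant depending only on $\calX$ and $r$, so $\schur_\lambda(\calX) = O_\calX\left(r^{|\lambda|}\right)$ holds exactly as stated, with no restriction to $r \geq 1$ and no reinterpretation of the lemma. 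This is precisely the paper's argument; your weakened bound and the ensuing hedge are an error, not a defect of the statement.

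Part (2) contains a genuine gap: you insist on part (1) of Lemma~\ref{4_lem_bound_det} (the product over columns of row maxima), and that inequality is already too weak to yield the claimed bound, so no subsequent bookkeeping can rescue it. For nonnegative exponents the maximum of $|x^{\lambda_j + n - j}|$ over $x \in \calX$ is attained by the element of largest modulus, say $y_0 \in \calY$ (assuming $\calY \neq \emptyset$), so the column product equals $|y_0|^{|\lambda| + n(n-1)/2}$: the exponent is $|\lambda|$, not $\lambda_1$, and only one element of $\calY$ appears. Concretely, take $\calX = (2, 1/2)$ and $\lambda = (k,k)$: the column product is $2^{k+1} \cdot 2^{k} = 2^{2k+1}$, while the claimed bound is $\elementary(\calY)^{\lambda_1} = 2^k$, and the ratio $2^{k+1}$ is unbounded in $\lambda$ (the true value is $\schur_{(k,k)}(2,1/2) = 1$). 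So the per-column maxima provably do not ``collapse'' to $\elementary(\calY)^{\lambda_1}$, which is exactly the check you flagged as the main obstacle. The statement requires the other inequality, part (2) of Lemma~\ref{4_lem_bound_det} (product over rows of column maxima), and this is what the paper uses: each row indexed by $x$ with $|x| \leq 1$ contributes a factor at most $1$, each row indexed by $y \in \calY$ contributes at most $|y|^{\lambda_1 + n - 1}$, and the product over rows is $\elementary(\abs(\calY))^{\lambda_1}$ up to the constant factor $\elementary(\abs(\calY))^{n-1}$. Your closing sentence in fact describes this row-by-row estimate, but it is incompatible with the column-product starting point you chose; switching to the row version of the determinant bound makes your proof correct and identical to the paper's.
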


\begin{proof} Set $n = l(\calX)$. To show the first bound, suppose that $\abs(\calX) \leq r$. By the determinantal definition for Schur functions,
\begin{align*}
\schur_\lambda(\calX) ={} & O_\calX \left( \det\left(x^{\lambda_j + n - j}\right)_{\substack{x \in \calX \\ 1 \leq j  \leq n}}\right)
\intertext{as the denominator only depends on $\calX$. Applying the first statement of Lemma~\ref{4_lem_bound_det} yields}
\schur_\lambda(\calX) ={} & O_\calX \left( \prod_{j = 1}^n \max_{x \in \calX} \left| x^{\lambda_j + n - j} \right| \right) \\
={} & O_\calX \left( r^{|\lambda|}\right)
.
\end{align*}
The second bound in this lemma is a consequence of the second statement of Lemma~\ref{4_lem_bound_det}:
\begin{align*}
\schur_\lambda(\calX) ={} & O_\calX \left( \det\left(x^{\lambda_j + n - j}\right)_{\substack{x \in \calX \\ 1 \leq j  \leq n}}\right) \\
={} & O_\calX \left( \prod_{x \in \calX} \max_{1 \leq j \leq n} \left|x^{\lambda_j + n - j} \right| \right).
\intertext{By assumption all variables $x \in \calX$ that are not elements of $\calY$ are less than 1 in absolute value. Hence,}
\schur_\lambda(\calX) ={} & O_\calX \left( \prod_{y \in \calY} \max_{1 \leq j \leq n} \left|y^{\lambda_j + n - j} \right| \right) \\
={} & O_\calX \left( \prod_{y \in \calY} y^{\lambda_1} \right)
. 
\end{align*}
This concludes the proof since $\elementary(\calY)$ without index is our notation for the product.
\end{proof}

The first bound in Lemma \ref{4_lem_det_bound_Schur} can be viewed as a special case of Lemma~\ref{4_lem_det_bound_LS}, which gives an analogous statement for Littlewood-Schur functions.

\begin{lem} \label{4_lem_det_bound_LS} Fix a natural number $l$, a positive number $r$ and two sets of variables $\calX$ and $\calY$ such that $\abs(\calX) \leq r$ and the elements of $\calX \cup \calY$ are pairwise distinct. As a function of partitions $\lambda$ with $l(\lambda) \leq l$, 
\begin{align*}
LS_\lambda(-\calX; \calY) = O_\calP \left( r^{|\lambda|} \right)
\end{align*}
where the implicit constant depends on $\calP = \{\calX, \calY, l\}$.
\end{lem}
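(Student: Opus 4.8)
The plan is to bound $LS_\lambda(-\calX;\calY)$ directly from the determinantal formula in Theorem~\ref{4_thm_det_formula_for_Littlewood-Schur}, in the same spirit as the proof of Lemma~\ref{4_lem_det_bound_Schur}. Write $n=l(\calX)$ and $m=l(\calY)$, and let $k$ be the $(m,n)$-index of $\lambda$. If $k<0$ the Littlewood-Schur function vanishes and there is nothing to prove, so assume $k\ge0$. Since $|\varepsilon(\lambda)|=1$ and the prefactor $\Delta(\calY;\calX)/(\Delta(\calX)\Delta(\calY))$ is a fixed nonzero constant (the elements of $\calX\cup\calY$ are pairwise distinct), the task reduces to showing that the determinant $\det M$ of the block matrix is $O_\calP(r^{|\lambda|})$. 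As $0\le k\le\min\{m,n\}$, the matrix $M$ has size $n+m-k\le n+m$, so Lemma~\ref{4_lem_bound_det} applies with a uniform size bound.

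I would apply the first statement of Lemma~\ref{4_lem_bound_det}, bounding $|\det M|$ by the product over columns of the largest absolute value in each column; the point of choosing this statement is that it separates the $m$ columns indexed by $y\in\calY$ from the $n-k$ columns indexed by $j\in\{1,\dots,n-k\}$. The $\calY$-columns contribute only a constant: their entries are either $(x-y)^{-1}$, which is fixed, or $y^{\lambda'_i+m-n-i}$ with $1\le i\le m-k$, and here the crucial observation is that $\lambda'_i\le l(\lambda)\le l$, so the exponents lie in a bounded range and each such entry is $O_\calP(1)$. Each column indexed by $j$ contributes $\max_{x\in\calX}|x|^{\,\lambda_j+n-m-j}$; writing $e_j=\lambda_j+n-m-j$, the columns with $e_j<0$ are $O_\calP(1)$ (the exponent is bounded below by $k-m\ge-m$ and $\min_{x}|x|>0$), while the columns with $e_j\ge0$ give at most $r^{\,\sum_{e_j\ge0}e_j}$ because $|x|\le r$.

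The heart of the matter, and the step I expect to be the main obstacle, is to show that the exponent $\sum_{j\le n-k,\,e_j\ge0}e_j$ equals $|\lambda|$ up to an additive constant depending only on $\calP$; this is exactly what is needed to turn $r^{\sum e_j}$ into $O_\calP(r^{|\lambda|})$ for \emph{every} fixed $r>0$, since for $r<1$ one cannot afford to lose a $\lambda$-dependent amount in the exponent. Here I would use two consequences of $k\ge0$. First, $k\ge0$ is equivalent to the hook condition $\lambda_{n+1}\le m$, so the rows beyond the $n$-th contribute at most $(l-n)^+m$ boxes. Second, the defining property of the index gives $\lambda_{n+1-k}\le m-k$, so each of the rows $n-k+1,\dots,n$ has at most $m-k\le m$ boxes, contributing at most $k(m-k)\le m^2$ in total. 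Hence $\sum_{j=1}^{n-k}\lambda_j=|\lambda|-O_\calP(1)$, and since the finitely many columns with $e_j<0$ have $\lambda_j\le m$ and the terms $n-m-j$ are bounded, one obtains $\sum_{j\le n-k,\,e_j\ge0}e_j=|\lambda|+O_\calP(1)$.

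Putting the pieces together, $|\det M|=O_\calP(1)\cdot r^{|\lambda|+O_\calP(1)}=O_\calP(r^{|\lambda|})$ because $r$ is fixed, and therefore $LS_\lambda(-\calX;\calY)=O_\calP(r^{|\lambda|})$. A minor technical point I would address is that the negative exponents on $x$ and $y$ require these variables to be nonzero; this is harmless since the implicit constant is permitted to depend on the fixed sets $\calX$ and $\calY$, and the intended applications involve nonzero variables in any case.
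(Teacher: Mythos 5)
Your proposal is correct and follows essentially the same route as the paper's own proof: determinantal formula, the column version of Lemma~\ref{4_lem_bound_det}, the observation that the $\calY$-columns are $O_\calP(1)$ because $\lambda'_i \leq l$, and finally the index condition $\lambda_i \leq m-k$ for $i > n-k$ together with $l(\lambda)\leq l$ to show the exponent differs from $|\lambda|$ by an additive constant. Your explicit two-sided bookkeeping of the exponent (and of the columns with negative exponent) is just a more careful write-up of the step the paper compresses into the bound $|\lambda_{[n-k]}| \leq |\lambda| \leq |\lambda_{[n-k]}| + lm$.
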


\begin{proof} Set $n = l(\calX)$, $m = l(\calY)$ and denote the $(m,n)$-index of $\lambda$ by $k$. We remark that $k$ depends on $\lambda$, while $m$ and $n$ are constants. The determinantal formula for Littlewood-Schur functions (\textit{i.e.}\ Theorem~\ref{4_thm_det_formula_for_Littlewood-Schur}) entails that if $k$ is non-negative 
\begin{align*}
LS_\lambda(-\calX, \calY) ={} & O_{\calX, \calY} \left( \det \begin{pmatrix} \left( (x - y)^{-1} \right)_{\substack{x \in \calX \\ y \in \calY}} & \left( x^{\lambda_j + n - m - j} \right)_{\substack{x \in \calX \\ 1 \leq j \leq n - k}} \\ \left( y^{\lambda'_i + m - n - i} \right)_{\substack{1 \leq i \leq m - k \\ y \in \calY}} & 0\end{pmatrix} \right)
;
\intertext{otherwise, the Littlewood-Schur function $LS_\lambda(-\calX; \calY)$ vanishes, allowing us to ignore the case $k < 0$. Let us call this matrix $A$. As the size of $A$ is $m + n - k \leq m + n$ for all partitions $\lambda$, Lemma~\ref{4_lem_bound_det} states that}
LS_\lambda (-\calX; \calY) ={} & O_{\calX, \calY} \left( \prod_{j = 1}^{m + n - k} \max_{1 \leq i \leq m + n - k} |a_{ij}| \right)
.
\intertext{The condition that $l(\lambda) \leq l$ is equivalent to $\lambda'_1 \leq l$, and thus implies that $\lambda'_i \leq l$ for all $i$. Hence, the $m$ first columns of $A$ make no asymptotically relevant contribution to the bound. Therefore,} 
LS_\lambda (-\calX; \calY) ={} & O_{\calX, \calY, l} \left( \prod_{j = 1}^{n - k} \max_{x \in \calX} \left| x^{\lambda_j + n - m - j} \right| \right) = O_{\calX, \calY, l} \left( r^{\left| \lambda_{[n - k]} \right|} \right)
.
\end{align*}
By the definition of index, $\lambda_i \leq m - k$ for all indices $i > n - k$. Combined with the condition that $l(\lambda) \leq l$, we infer that
$$\left| \lambda_{[n - k]} \right| \leq \left| \lambda \right| \leq \left|\lambda_{[n - k]}\right| + (l - (n - k))(m - k) \leq \left|\lambda_{[n - k]}\right| + lm
.$$
Therefore, if $\displaystyle \begin{cases} r \geq 1 \\ r \leq 1\end{cases}\!\!\!\!$, then $\displaystyle r^{\left|\lambda_{[n - k]}\right|} \leq \begin{dcases} r^{|\lambda|} \\ r^{|\lambda|  - lm}\end{dcases} = O_{l,m, r} \left( r^{|\lambda|} \right)$.
\end{proof}

If we drop the condition that the variables in $\calX \cup \calY$ be pairwise distinct, we can no longer use Lemma~\ref{4_lem_det_bound_LS} to obtain an asymptotic bound on $LS_\lambda(\calX; \calY)$, given that the implicit constant might grow arbitrarily large whenever elements of $\calX \cup \calY$ converge towards each other. The following lemmas, which are based on the combinatorial definitions for Schur and Littlewood-Schur functions, provide bounds that do not depend on $\calX$ and $\calY$. In particular, the variables need not to be pairwise distinct. However, the bounds based on the combinatorial definitions are not as good as the bounds based on the determinantal definitions.

\begin{lem} \label{4_lem_comb_bound_schur} Fix a positive number $r$ and a set of variables $\calX$ such that $\abs(\calX) \leq r$. As a function of $\lambda$,
\begin{align*}
\schur_\lambda(\calX) = O \left( |\lambda|^{l(\calX)^2} r^{|\lambda|} \right)
.
\end{align*}
\end{lem}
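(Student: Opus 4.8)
The plan is to use the combinatorial definition of the Schur function as a sum over semistandard Young tableaux (SSYT) of shape $\lambda$ with entries in $\{1, \dots, l(\calX)\}$, which is exactly the definition the excerpt invokes when it promises bounds ``based on the combinatorial definitions.'' Writing $n = l(\calX)$, we have
\begin{align*}
\schur_\lambda(\calX) = \sum_{T} \prod_{x \in \calX} x^{t_x(T)},
\end{align*}
where $T$ ranges over SSYT of shape $\lambda$ filled with entries in $[n]$, and $t_x(T)$ counts the boxes of $T$ labelled by the index of $x$. First I would observe that each monomial $\prod_{x} x^{t_x(T)}$ has absolute value at most $r^{|\lambda|}$, since the exponents are non-negative integers summing to $|\lambda|$ and every variable has $\abs(x) \leq r$; the bound $r^{|\lambda|}$ holds uniformly whether $r \geq 1$ or $r < 1$ (in the latter case $r^{|\lambda|}$ is the largest possible value, attained by placing all weight on the largest variable). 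Hence by the triangle inequality $|\schur_\lambda(\calX)| \leq (\#\,\mathrm{SSYT}) \cdot r^{|\lambda|}$, and the whole task reduces to bounding the number of SSYT of shape $\lambda$ with entries in $[n]$ by $O\!\left(|\lambda|^{n^2}\right)$.

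The key step is therefore a crude but uniform count of such tableaux. I would bound the number of SSYT by the number of \emph{all} fillings whose entries are non-decreasing along rows, which is controlled column by column: a single column of length $\ell \leq n$ that is strictly increasing with entries in $[n]$ is determined by a size-$\ell$ subset of $[n]$, so there are at most $2^n$ choices per column, independent of $\lambda$. Since $\lambda$ has at most $\lambda_1 \leq |\lambda|$ columns, this already gives $(\#\,\mathrm{SSYT}) \leq (2^n)^{|\lambda|}$, which is exponential and too weak. Instead I would count more carefully by noting that a column of height $\ell$ is a strictly increasing sequence in $[n]$, of which there are $\binom{n}{\ell} \leq 2^n$, but crucially the total degree is fixed: a better route is to bound the number of SSYT of shape $\lambda$ by a polynomial in $|\lambda|$ of degree $n^2$ directly. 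The cleanest argument is that the number of SSYT of shape $\lambda$ in $[n]$ is the dimension $\schur_\lambda(1,\dots,1)$ (the principal specialization at $1$), which by the hook-content / Weyl dimension formula equals a product $\prod_{1 \leq i < j \leq n}\frac{\lambda_i - \lambda_j + j - i}{j - i}$ over the $\binom{n}{2}$ pairs; each factor is $O(|\lambda|)$ since $|\lambda_i - \lambda_j| \leq |\lambda|$, yielding $\schur_\lambda(1,\dots,1) = O\!\left(|\lambda|^{\binom{n}{2}}\right) = O\!\left(|\lambda|^{n^2}\right)$, and the implicit constant depends only on $n$.

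Putting these together gives $|\schur_\lambda(\calX)| \leq \schur_\lambda(1,\dots,1)\, r^{|\lambda|} = O\!\left(|\lambda|^{n^2} r^{|\lambda|}\right)$ with an implicit constant depending only on $n = l(\calX)$ (indeed here even only on $n$ and not on the actual variables), which is exactly the claimed bound. I expect the \emph{main obstacle} to be pinning down the polynomial count of tableaux with the correct degree $l(\calX)^2$: the naive column-by-column or box-by-box estimates produce an exponential factor, so one genuinely needs either the Weyl dimension formula for the principal specialization or an equivalent argument that each of the $\binom{n}{2}$ ``pairwise'' degrees of freedom contributes only a linear-in-$|\lambda|$ factor. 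Everything else — reducing to a count via the triangle inequality and absorbing the monomial sizes into $r^{|\lambda|}$ — is routine, and the looseness of the bound (degree $n^2$ rather than the sharper $\binom{n}{2}$) is harmless since the lemma only asserts $O\!\left(|\lambda|^{l(\calX)^2} r^{|\lambda|}\right)$.
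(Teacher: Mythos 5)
Your proof is correct, and its first half --- invoking the combinatorial (SSYT) definition of $\schur_\lambda$, bounding every monomial by $r^{|\lambda|}$ via the triangle inequality, and thereby reducing the lemma to a polynomial-in-$|\lambda|$ count of tableaux --- is exactly the reduction the paper makes. Where you diverge is the counting step. The paper counts directly: for each value $i \in \{1, \dots, l(\calX)\}$, the boxes of $T$ containing $i$ lie in rows $1$ through $i$ (by strict increase down columns) and form a horizontal strip, so they are determined by how many of them sit in each of those $i$ rows, giving at most $\lambda_1 \cdots \lambda_i \leq |\lambda|^i$ choices; multiplying over $i$ bounds the number of tableaux by $|\lambda|^{1 + 2 + \cdots + l(\calX)} \leq |\lambda|^{l(\calX)^2}$. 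You instead identify the count with the principal specialization $\schur_\lambda(1, \dots, 1)$ and invoke the Weyl dimension formula $\prod_{1 \leq i < j \leq n} \frac{\lambda_i - \lambda_j + j - i}{j - i}$, each of whose $\binom{n}{2}$ factors is $O_n(|\lambda|)$. Your route buys a slightly sharper exponent, $\binom{n}{2}$ rather than $n(n+1)/2$, and makes transparent that the implicit constant depends only on $n = l(\calX)$; the paper's route buys self-containedness, using nothing beyond the defining row/column conditions of a tableau. One correction to your closing assessment: the claim that a polynomial-degree count ``genuinely needs'' the Weyl dimension formula or an equivalent is too strong --- the paper's value-by-value placement argument is precisely the elementary alternative, and it is only the column-by-column count (not every naive count) that degenerates into an exponential factor.
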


\begin{proof} Owing to the combinatorial definition for Schur functions (which can be found in \cite{Sagan}),  
\begin{align*}
\left| \schur_\lambda(\calX) \right| \leq \sum_T r^{|\lambda|}
\end{align*}
where the sum runs over all semistandard $\lambda$-tableaux $T$ whose entries do not exceed $l(\calX)$. Hence, it suffices to bound the number of tableaux that contribute to the sum. Given that the rows/columns of $T$ are weakly/strongly increasing, there are at most $\lambda_1 \cdots \lambda_i \leq |\lambda|^i$ possible choices for the boxes of $T$ that contain the positive integer $i$. Multiplying over all $1 \leq i \leq l(\calX)$ gives the desired bound.
\end{proof}

\begin{lem} \label{4_lem_comb_bound_LS} Fix a natural number $l$, a positive number $r$ and two sets of variables $\calX$ and $\calY$ such that $\abs(\calX) \leq r$. As a function of partitions $\lambda$ with $l(\lambda) \leq l$, 
\begin{align*}
LS_\lambda(\calX; \calY) = O_\calP \left( |\lambda|^{l(\calX)^2 + l^2}  r^{|\lambda|} \right)
\end{align*}
where the implicit constant depends on $\calP = \{l, r, l(\calY), \max(\abs(\calY))\}$.
\end{lem}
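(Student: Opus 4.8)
The plan is to bound $LS_\lambda$ by reducing, through its defining expansion in Definition~\ref{4_defn_comb_LS}, to the combinatorial Schur bound of Lemma~\ref{4_lem_comb_bound_schur} applied to $\calX$ and to $\calY$ separately. Write $s = \max(\abs(\calY))$ and $m = l(\calY)$. Since the Littlewood--Richardson coefficients $c^\lambda_{\mu\nu}$ are non-negative, the same triangle-inequality step used just after Proposition~\ref{4_prop_gen_Cauchy} gives
\begin{align*}
\left| LS_\lambda(\calX; \calY) \right| \le \sum_{\mu, \nu} c^\lambda_{\mu\nu} \left| \schur_\mu(\calX) \right| \left| \schur_{\nu'}(\calY) \right|,
\end{align*}
so it suffices to estimate this non-negative sum. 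A term can be nonzero only when $\mu, \nu \subset \lambda$ with $|\mu| + |\nu| = |\lambda|$, whence in particular $l(\mu), l(\nu) \le l(\lambda) \le l$.

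First I would dispose of the $\calY$-factor. Because $\schur_{\nu'}(\calY) = 0$ unless $l(\nu') = \nu_1 \le m$, and $l(\nu) \le l$, the index $\nu$ is confined to the fixed range $0 \le |\nu| \le lm$; hence there are only $O_\calP(1)$ admissible $\nu$. For each of them Lemma~\ref{4_lem_comb_bound_schur} gives $\left| \schur_{\nu'}(\calY) \right| = O\!\left( |\nu'|^{m^2} s^{|\nu'|} \right)$, which is $O_\calP(1)$ since $|\nu'| = |\nu| \le lm$ is bounded in terms of $\calP$.

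Next I would estimate the $\calX$-factor uniformly. Lemma~\ref{4_lem_comb_bound_schur} gives $\left| \schur_\mu(\calX) \right| = O\!\left( |\mu|^{l(\calX)^2} r^{|\mu|} \right)$ with an absolute constant. I bound the polynomial factor by $|\mu|^{l(\calX)^2} \le |\lambda|^{l(\calX)^2}$, and I convert the exponential factor using $|\mu| = |\lambda| - |\nu|$ with $0 \le |\nu| \le lm$, so that $r^{|\mu|} \le \max\{1, r^{-1}\}^{lm}\, r^{|\lambda|} = O_\calP\!\left( r^{|\lambda|} \right)$ regardless of whether $r \ge 1$ or $r < 1$. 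This yields $\left| \schur_\mu(\calX) \right| = O_\calP\!\left( |\lambda|^{l(\calX)^2} r^{|\lambda|} \right)$ uniformly over the admissible $\mu$.

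It remains to count. For each admissible $\nu$, the partitions $\mu \subset \lambda$ with $l(\mu) \le l$ number at most $(\lambda_1 + 1)^l = O(|\lambda|^l)$, while each coefficient $c^\lambda_{\mu\nu}$ is at most the number of fillings of the $|\nu| \le lm$ boxes of $\lambda \setminus \mu$ with entries in $\{1, \dots, l(\nu)\}$, hence $c^\lambda_{\mu\nu} = O_\calP(1)$. Combining the $O_\calP(1)$ choices of $\nu$, the $O(|\lambda|^l)$ choices of $\mu$, the bounded coefficients, and the two Schur estimates gives
\begin{align*}
\left| LS_\lambda(\calX; \calY) \right| = O_\calP\!\left( |\lambda|^l \cdot |\lambda|^{l(\calX)^2} r^{|\lambda|} \right) = O_\calP\!\left( |\lambda|^{l(\calX)^2 + l^2} r^{|\lambda|} \right),
\end{align*}
the last inequality using $l \le l^2$ (the case $\lambda = \emptyset$ being trivial). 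The main point to watch is the accounting of dependencies: the constant from the $\schur_\mu(\calX)$ bound is absolute, and every other constant --- from the bounded range of $\nu$, the factor $\max\{1, r^{-1}\}^{lm}$, the estimate for $\schur_{\nu'}(\calY)$, and the Littlewood--Richardson bound --- depends only on $\calP = \{l, r, m, s\}$ and never on $l(\calX)$ or on the sets $\calX, \calY$ themselves. This uniformity is exactly what the statement requires.
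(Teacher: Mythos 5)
Your proof is correct and follows the paper's overall strategy: expand $LS_\lambda$ through Definition~\ref{4_defn_comb_LS}, use non-negativity of the Littlewood--Richardson coefficients, apply Lemma~\ref{4_lem_comb_bound_schur} to both Schur factors, confine $\nu$ to a bounded rectangle, and replace $|\mu|$ by $|\lambda|$ at an $O_\calP(1)$ cost. Where you genuinely diverge is the final bookkeeping, and the divergence is instructive. The paper keeps the count of admissible pairs $(\mu,\nu)$ at $O_\calP(1)$ --- using $\mu \subset \lambda$ together with $|\mu| = |\lambda| - |\nu|$, which leaves at most $|\nu| + 1 \leq l(\calY)\,l + 1$ choices per part of $\mu$ --- and absorbs all polynomial growth into the Littlewood--Richardson coefficient, bounded by $|\lambda|^{l^2}$ via tableaux of shape $\lambda \setminus \nu$ with weight $\mu$. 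You do the opposite: you bound $c^\lambda_{\mu\nu}$ by fillings of the \emph{small} shape $\lambda \setminus \mu$ with content $\nu$, so that $c^\lambda_{\mu\nu} \leq l(\nu)^{|\nu|} = O_\calP(1)$, and instead pay $O_\calP\left(|\lambda|^{l}\right)$ for the cruder count of partitions $\mu \subset \lambda$ with at most $l$ parts. The two accountings are dual, and yours actually yields the sharper exponent $l(\calX)^2 + l$ before you relax it to the stated $l(\calX)^2 + l^2$, so your argument proves slightly more than the lemma claims. Your tracking of implicit constants is also right, and it is the point that matters most here: the constant from the $\schur_\mu(\calX)$ estimate is absolute, and everything else depends only on $\calP = \{l, r, l(\calY), \max(\abs(\calY))\}$, never on $\calX$ or $l(\calX)$, exactly as the statement requires.
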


\begin{proof} Applying Lemma~\ref{4_lem_comb_bound_schur} to the combinatorial definition for Littlewood-Schur functions (\textit{i.e.}\ Definition~\ref{4_defn_comb_LS}) gives us
\begin{align*}
LS_\lambda(\calX; \calY) ={} & \sum_{\substack{\mu, \nu: \\ \nu_1 \leq l(\calY)}} c^\lambda_{\mu \nu} O \left( |\mu|^{l(\calX)^2} r^{|\mu|} |\nu|^{l(\calY)^2} R^{|\nu|} \right)
\intertext{where $R = \max(\abs(\calY))$. The next step in this proof relies on some basic properties of Littlewood-Richardson coefficients, a justification of which can be found in \cite[p.~54-55]{thesis}. Since $c^\lambda_{\mu \nu}$ vanishes unless $\nu$ is a subset of $\lambda$, only partitions $\nu$ contained in the rectangle $\left\langle l(\calY)^l \right\rangle$ appear in the sum. Hence, the fact that $|\nu| + |\mu| = |\lambda|$ for all partitions that contribute to the sum entails that $|\mu| \leq |\lambda| \leq |\mu| + l(\calY)l$, which allows us to replace $|\mu|$ by $|\lambda|$. Keeping track of the fact that $\mu \subset \lambda$ whenever $c^\lambda_{\mu \nu} \neq 0$, we thus have that}
LS_\lambda(\calX; \calY) ={} & O_{l, r, l(\calY), R} \left( |\lambda|^{l(\calX)^2} r^{|\lambda|} \right) \sum_{\substack{\mu, \nu: \\ \nu \subset \left\langle l(\calY)^l \right\rangle \\ \mu \subset \lambda \\ |\mu| + |\nu| = |\lambda| }} c^\lambda_{\mu \nu} 
.
\end{align*}
According to the Littlewood-Richardson rule (stated and proved in \cite{Sagan}), $c^\lambda_{\mu \nu}$ can be bounded by the number of skew semistandard $\lambda \setminus \nu$-tableaux $T$ with weight $\mu$. For each positive integer $i$, there are at most $\lambda_1 \cdots \lambda_l \leq |\lambda|^l$ ways to choose the boxes of $T$ that contain $i$ (given that $l(\lambda) \leq l$). The condition that $l(\mu) \leq l$ thus implies that $c^\lambda_{\mu \nu} \leq |\lambda|^{l^2}$.

The bound stated above now follows from the observation that the number of pairs $\mu, \nu$ to sum over is less than $l(\calY)^l \times (l(\calY) l)^l$. Indeed, there are less than $l(\calY)^l$ partitions $\nu$ that are contained in the rectangle $\left\langle l(\calY)^l \right\rangle$. Fixing a partition $\nu$, the conditions that $\mu \subset \lambda$ and $|\mu| = |\lambda| - |\nu|$ allow us to infer that there are at most $|\nu| \leq l(\calY) l$ ways to choose a part $\mu_i$ for $1 \leq i \leq l$.
\end{proof}

\subsection{The recipe}
Before stating our recipe for computing averages of mixed ratios of characteristic polynomials over the group of unitary matrices, we quickly recall the notion of a characteristic polynomial from linear algebra. The fact that we only consider unitary -- and thus invertible -- matrices allows us to work with a variant of the standard definition, which possesses close conjectural ties to the theory of $L$-functions.

\begin{defn} [characteristic polynomial] The characteristic polynomial of a unitary matrix $g \in U(N)$ is given by $\chi_g(z) = \det \left( I - zg^{-1}\right)$ where $I$ is the identity matrix.
\end{defn}

\begin{recipe*} [ratios and logarithmic derivatives] Let $\calA$, $\calB$, $\calC$, $\calD$, $\calE$ and $\calF$ be sets of non-zero variables so that the four latter only contain elements that are strictly less than 1 in absolute value. If $l(\calD) \leq l(\calA)$ and the elements of $\calA \cup \calB^{-1}$ are pairwise distinct, then 
\begin{align} \label{4_recipe_eq}
\begin{split}
& \hspace{-15pt} \int_{U(N)} \frac{\prod_{\alpha \in \calA} \chi_g(\alpha) \prod_{\beta \in \calB} \chi_{g^{-1}}(\beta)}{\prod_{\delta \in \calD} \chi_g(\delta) \prod_{\gamma \in \calC} \chi_{g^{-1}} (\gamma)} \prod_{\varepsilon \in \calE} \frac{\chi'_g(\varepsilon)}{\chi_g(\varepsilon)} \prod_{\varphi \in \calF} \frac{\chi'_{g^{-1}}(\varphi)}{\chi_{g^{-1}}(\varphi)} dg \\
={} & (-1)^{l(\calE) + l(\calF)} \elementary (-\calB)^N \sum_{\substack{\calS, \calT \subset \calA \cup \calB^{-1}: \\ \calS \cup_{l(\calB), l(\calA)} \calT \sorteq \calA \cup \calB^{-1}}} \elementary (-\calS)^{N + l(\calA) - l(\calD)} \frac{\Delta(\calD; \calS)}{\Delta(\calT; \calS)} \\
& \times \sum_{\substack{\calE', \calE'' \subset \calE: \\ \calE' \cup \calE'' \sorteq \calE}} \sum_{\substack{q,n \geq 0: \\ q + n \leq N - l(\calC)}} \left( \sum_{\substack{\chi: \\ l(\chi) = l\left(\calE''\right) \\ |\chi| = q}} \monomial_{\chi - \left\langle 1^{l\left(\calE''\right)} \right\rangle} \left(-\calE''\right) \power_{-\chi}(-\calS) \right) \\
& \times \sum_{\substack{\psi: \\ l(\psi) = l\left(\calE'\right)}} \monomial_{\psi - \left\langle 1^{l\left(\calE'\right)} \right\rangle} \left(\calE'\right) \sum_{\substack{\omega: \\ l(\omega) = l(\calF) \\ |\omega| = n}} \monomial_{\omega - \left\langle 1^{l(\calF)} \right\rangle} (\calF) \\
& \times \sum_{\substack{\lambda, \xi: \\ \omega \cup \xi \sorteq \psi \cup \lambda}} z_\lambda^{-1} \power_\lambda \left( \rho^\beta_{-\calT} \cup \rho^\alpha_{\calD} \right) \prod_{i \geq 1} \frac{i^{m_i(\omega)} m_i(\psi \cup \lambda)!}{m_i(\xi)!} \power_\xi (\calC)
\\
& + \error
.
\end{split}
\end{align}
An asymptotic bound for the error is given in \eqref{4_eq_error_bound_for_recipe} on page \pageref{4_eq_error_bound_for_recipe}.
\end{recipe*}

We call this statement a recipe rather than a theorem because we are not able to give a neat bound for the error term. In particular, the error term might be larger than the main term. However, when some of the sets of variables are empty the error term becomes more tractable, which will allow us to prove the results presented in Section~\ref{4_sec_results}. In this sense the recipe provides a unified way of showing formulas for products of ratios and/or logarithmic derivatives. 

On a more technical note, observe that for $g \in U(N)$ and $z \in \C \setminus \{0\}$,
\begin{align*}
\chi_g(z) = \det \left(I - z g^{-1}\right) = \det\left(-zg^{-1}\right) \det\left(I - z^{-1}g\right) = (-z)^N \overline{\elementary(\calR(g))} \chi_{g^{-1}} \left(z^{-1}\right)
\end{align*}
where $\calR(g)$ is the multiset of eigenvalues of $g$. Considering the integrand on the left-hand side of \eqref{4_recipe_eq}, we see that this observation allows us to replace $\chi_g(\delta)$ by $\chi_{g^{-1}}(\gamma)$ with $\gamma = \delta^{-1}$ at the cost of a factor which is easy to handle. Hence, for any $r \in \R \setminus \{0\}$, the condition that $\abs(\calC)$, $\abs(\calD) \leq r$ is essentially equivalent to the condition that all elements of $\calC \cup \calD$ are less than $r$ or greater than $r^{-1}$ in absolute value. Moreover, the same holds for the sets of variables $\calA$ and $\calB$. In particular, prerequisites of the type $\abs(\calA)$, $\abs(\calB) \leq 1$ are essentially empty conditions. Of course, one has to be careful not to violate other conditions, such as $l(\calD) \leq l(\calA)$, when using this trick. 

\begin{proof} 
This proof is based on the observation that the integrand on the left-hand side is symmetric in the eigenvalues of the unitary matrix $g$, say $\calR(g)$, as well as in their complex conjugates $\overline{\calR(g)}$. It is thus (at least theoretically) possible to express the integrand as an infinite linear combination of products of Schur functions of the form 
$$\overline{\schur_\lambda(\calR(g))} \schur_\kappa(\calR(g)).$$
Once the coefficients of this linear combination are known, Schur orthogonality immediately gives an expression for the integral on the left-hand side. At the cost of an error (which is ultimately due to the fact that Schur functions are only \emph{essentially} orthonormal), we then simplify this expression by applying results presented in the preceding sections.

Given that $\abs(\calC)$, $\abs(\calD) < 1$ elementary linear algebra manipulations together with the generalized Cauchy identity (\textit{i.e.}\ Proposition~\ref{4_prop_gen_Cauchy}) give the following expression for the ratios on the left-hand side in \eqref{4_recipe_eq}:
\begin{align*}
& \frac{\prod_{\alpha \in \calA} \chi_g(\alpha) \prod_{\beta \in \calB} \chi_{g^{-1}}(\beta)}{\prod_{\delta \in \calD} \chi_g(\delta) \prod_{\gamma \in \calC} \chi_{g^{-1}} (\gamma)} \displaybreak[2]\\ 
={} & \prod_{\alpha \in \calA} \det \left( I - \alpha g^{-1} \right) \prod_{\beta \in \calB} \left[ \det(g) \det(-\beta I) \det \left( -\beta^{-1} g^{-1} + I \right) \right] \\
& \times \prod_{\delta \in \calD} \det \left( I - \delta g^{-1} \right)^{-1} \prod_{\gamma \in \calC} \det \left( I - \gamma g \right)^{-1} \displaybreak[2] \\
={} & \elementary (-\calB)^N \det(g)^{l(\calB)} \prod_{\substack{x \in \calA \cup \calB^{-1} \\ \rho \in \calR(g)}} (1 - x\overline{\rho}) \prod_{\substack{\delta \in \calD \\ \rho \in \calR(g)}} (1 - \delta \overline{\rho})^{-1} \prod_{\substack{\gamma \in \calC \\ \rho \in \calR(g)}} (1 - \gamma \rho)^{-1} \displaybreak[2]\\
={} & \elementary (-\calB)^N \elementary(\calR(g))^{l(\calB)} \left[ \sum_{\lambda} LS_{\lambda'}\left(- \left(\calA \cup \calB^{-1}\right); \calD \right) \overline{\schur_\lambda(\calR(g))} \right] \hspace{-3pt} \left[ \sum_{\kappa} \schur_\kappa (\calC) \schur_\kappa(\calR(g)) \right]
\hspace{-2pt} .
\end{align*}
In their combinatorial proof of a formula for averages of ratios of characteristic polynomials over the unitary group, Bump and Gamburd use the same algebraic manipulations and similar Cauchy identities to write ratios of characteristic polynomials in terms of Schur functions \cite[p.~245-246]{bump06}.
Furthermore, Dehaye remarks that for $\varepsilon \in \C$ with $|\varepsilon| < 1$ \cite{POD08}, 
\begin{align*}
\frac{\chi'_{g}(\varepsilon)}{\chi_{g}(\varepsilon)} ={} & \sum_{\rho \in \calR(g)} \frac{-\overline{\rho}}{1 - \varepsilon \overline{\rho}} = -\sum_{m = 1}^\infty \varepsilon^{m - 1} \overline{\power_m(\calR(g))}
.
\end{align*}
Setting $e = l(\calE)$, $f = l(\calF)$ and $\calE = (\varepsilon_1, \dots, \varepsilon_e)$, $\calF = (\varphi_1, \dots, \varphi_f)$, we may thus reformulate the integral on the left-hand side in \eqref{4_recipe_eq} as
\begin{align}
\begin{split}
\LHS ={} & \elementary (-\calB)^N \sum_{\lambda} LS_{\lambda'}\left(- \left(\calA \cup \calB^{-1}\right); \calD\right) \sum_{\kappa} \schur_\kappa (\calC) \\
& \times (-1)^{e + f} \sum_{m_1, \dots, m_e \geq 1} \left( \prod_{i = 1}^e \varepsilon_i^{m_i - 1} \right) \sum_{n_1, \dots, n_f \geq 1} \left( \prod_{j = 1}^f \varphi_j^{n_j - 1} \right) \\
& \times \int_{U(N)} \elementary(\calR(g))^{l(\calB)} \overline{\schur_\lambda(\calR(g))} \schur_\kappa(\calR(g)) \prod_{i = 1}^e \overline{\power_{m_i}(\calR(g))} \prod_{j = 1}^f \power_{n_j}(\calR(g)) dg
.
\notag \end{split}
\intertext{In order to write the integrand as a linear combination of products of Schur functions, we repeatedly apply the Murnaghan-Nakayama rule (\textit{i.e.}\ Theorem~\ref{4_thm_MN_rule}):}
\begin{split}
\LHS ={} & \elementary (-\calB)^N \sum_{\lambda} LS_{\lambda'}\left(- \left(\calA \cup \calB^{-1}\right); \calD\right) \sum_{\kappa} \schur_\kappa (\calC) \\
& \times (-1)^{e + f} \sum_{m_1, \dots, m_e \geq 1} \left( \prod_{i = 1}^e \varepsilon_i^{m_i - 1} \right) \sum_{n_1, \dots, n_f \geq 1} \left( \prod_{j = 1}^f \varphi_j^{n_j - 1} \right) \\
& \times \sum_{\substack{\lambda^{(1)}, \dots, \lambda^{(e)}: \\ \lambda \overset{m_1}{\to} \lambda^{(1)} \overset{m_2}{\to} \dots \overset{m_e}{\to} \lambda^{(e)}}} (-1)^{\height \left( \lambda^{(e)} \setminus \lambda \right)}  \sum_{\substack{\kappa^{(1)}, \dots, \kappa^{(f)}: \\ \kappa \overset{n_1}{\to} \kappa^{(1)} \overset{n_2}{\to} \dots \overset{n_f}{\to} \kappa^{(f)}}} (-1)^{\height \left( \kappa^{(f)} \setminus \kappa \right)} \\
& \times \int_{U(N)} \elementary(\calR(g))^{l(\calB)} \overline{\schur_{\lambda^{(e)}}(\calR(g))} \schur_{\kappa^{(f)}}(\calR(g)) dg
.
\notag \end{split}
\intertext{It is a straightforward linear algebra exercise to show that for sequences $\calX$ of length $N$, $\elementary(\calX)^M \schur_\kappa(\calX) = \schur_{\kappa + \left\langle M^N \right\rangle} (\calX)$. Hence, Schur orthogonality (\textit{i.e.}\ Lemma~\ref{4_lem_Schur_ortho}) allows us to compute the integral. In practice, we just introduce the dummy variable $\pi$ to ensure that $\kappa^{(f)} + \left\langle l(\calB)^N \right\rangle = \lambda^{(e)}$, and that the length of the partition does not exceed $N$:}
\begin{split} \label{4_in_proof_recipe_lhs_before_main+error}
\LHS ={} & \elementary (-\calB)^N \sum_{\lambda} LS_{\lambda'}\left(- \left(\calA \cup \calB^{-1}\right); \calD\right) \sum_{\kappa} \schur_\kappa (\calC) \\
& \times (-1)^{e + f} \sum_{m_1, \dots, m_e \geq 1} \left( \prod_{i = 1}^e \varepsilon_i^{m_i - 1} \right) \sum_{n_1, \dots, n_f \geq 1} \left( \prod_{j = 1}^f \varphi_j^{n_j - 1} \right) \sum_{\substack{\pi: \\ l(\pi) \leq N}} \\
& \times \left[ \sum_{\substack{\lambda^{(1)}, \dots, \lambda^{(e)}: \\ \lambda \overset{m_1}{\to} \lambda^{(1)} \overset{m_2}{\to} \dots \overset{m_e}{\to} \lambda^{(e)} \\ \lambda^{(e)} = \pi + \left\langle l(\calB)^N \right\rangle}} (-1)^{\height \left( \lambda^{(e)} \setminus \lambda \right)} \right] \hspace{-4pt} \left[  \sum_{\substack{\kappa^{(1)}, \dots, \kappa^{(f)}: \\ \kappa \overset{n_1}{\to} \kappa^{(1)} \overset{n_2}{\to} \dots \overset{n_f}{\to} \kappa^{(f)} \\ \kappa^{(f)} = \pi}} (-1)^{\height \left( \kappa^{(f)} \setminus \kappa \right)} \right]
\hspace{-3pt} .
\end{split}
\end{align}
The remainder of the proof is dedicated to simplifying the expression above, which seems to come at the cost of introducing an error term. We will replace $\lambda^{(i)}$ by the following sum of partitions: $\lambda^{(i)} = \nu^{(i)} + \mu^{(i)}$ where $\nu^{(i)}$ is the intersection of $\left\langle l(\calB)^N \right\rangle$ and $\lambda^{(i)}$. Notice that every $m_i$-ribbon $\lambda^{(i)} \setminus \lambda^{(i - 1)}$ that appears in the expression above can be cut into two ribbons: a $q_i$-ribbon $\nu^{(i)} \setminus \nu^{(i - 1)}$ that is a subset of the rectangle $\left\langle l(\calB)^N \right\rangle$, and a $p_i$-ribbon $\mu^{(i)} \setminus \mu^{(i - 1)}$ whose boxes lie strictly to the right of the vertical line given by $x = l(\calB)$. 

For the main term, we restrict ourselves to ribbon sizes that satisfy
\begin{align} \label{4_in_proof_recipe_error_condition}
q_1 + \dots + q_e + n_1 + \dots + n_f \leq N - l(\calC)
.
\end{align}
This restriction leads to a number of simplifications: Given that only partitions $\kappa$ of length less than $l(\calC)$ contribute to the sum (since otherwise $\schur_\kappa(\calC)$ vanishes), the fact that $n_1 + \dots + n_f + l(\calC) \leq N$ \emph{entails} that $l(\pi) \leq N$. Moreover, the restriction implies that for every $m_i$-ribbon that appears in the main term, $p_i = 0$ or $q_i = 0$. This last simplification is probably best explained by means of a sketch. The following drawing depicts possible Ferrers diagrams of the partition $\left\langle l(\calB)^N \right\rangle + \pi$ (white) and its subset $\lambda$ (hatched). 
\begin{center}
\begin{tikzpicture}
\draw (0, 0) rectangle (1.5, 4.5); 
\draw[white] (1.5, 4.5) -- (1.5, 4); 
\draw (1.5, 4.5) -- (4, 4.5); 
\draw (4, 4.5) -- (4, 3.75);
\draw (4, 3.75) -- (3, 3.75);
\draw (3, 3.75) -- (3, 3.5);
\draw (3, 3.5) -- (2.75, 3.5);
\draw (2.75, 3.5) -- (2.75, 3);
\draw (2.75, 3) -- (2, 3);
\draw (2, 3) -- (2, 2.75);
\draw (2, 2.75) -- (1.5, 2.75);
\fill[pattern=north west lines, pattern color=black!40!white] (0, 0.5) rectangle (1, 4.5);
\fill[pattern=north west lines, pattern color=black!40!white] (1, 0.75) rectangle (1.25, 4.5);
\fill[pattern=north west lines, pattern color=black!40!white] (1.25, 1.5) rectangle (1.5, 4.5);
\fill[pattern=north west lines, pattern color=black!40!white] (1.5, 4) rectangle (3.25, 4.5);
\fill[pattern=north west lines, pattern color=black!40!white] (3.25, 4.25) rectangle (3.5, 4.5);
\fill[pattern=north west lines] (1.25, 2.75) rectangle (1.5, 3);
\draw (0, 0.5) -- (1, 0.5);
\draw (1, 0.5) -- (1, 0.75);
\draw (1, 0.75) -- (1.25, 0.75);
\draw (1.25, 0.75) -- (1.25, 1.5);
\draw (1.25, 1.5) -- (1.5, 1.5);
\draw (1.5, 4) -- (3.25, 4);
\draw (3.25, 4) -- (3.25, 4.25);
\draw (3.25, 4.25) -- (3.5, 4.25);
\draw (3.5, 4.25) -- (3.5, 4.5);
\draw[decoration={brace, raise=5pt},decorate] (0, 0) -- node[left=6pt] {$N$} (0, 4.5);
\draw[decoration={brace, raise=5pt, mirror},decorate] (0, 0) -- node[below=6pt] {$l(\calB)$} (1.5, 0);
\draw[decoration={brace, raise=5pt},decorate] (4, 4.5) -- node[right=6pt] {$\leq l(\calC) + n_1 + \dots + n_f$} (4, 2.75);
\draw[decoration={brace, raise=5pt, mirror},decorate] (1.5, 0) -- node[right=6pt] {$\leq q_1 + \dots + q_e$} (1.5, 1.5);
\end{tikzpicture}
\end{center}
By definition of the $q_i$, $\left| \lambda \cap \left\langle l(\calB)^N \right\rangle \right| = N l(\calB) - q_1 - \dots - q_e$, which implies that $N - \lambda'_{l(\calB)} \leq q_1 + \dots + q_e$, as indicated on the sketch. In addition, we have already seen that $l(\pi) \leq l(\calC) + n_1 + \dots + n_f$. Therefore, the condition given in \eqref{4_in_proof_recipe_error_condition} implies that the box with coordinates $(l(\calB), l(\pi))$ must be contained in $\lambda$. The box in question is marked by a slightly darker pattern. We thus conclude that every $m_i$-ribbon $\lambda^{(i)} \setminus \lambda^{(i - 1)}$ lies either to the left or strictly to the right of this box, which is the graphical way of saying that either $m_i = q_i$ or $m_i = p_i$. 

In sum, the main term is equal to
\begin{align*}
\main ={} & \elementary (-\calB)^N \sum_{\substack{\mu, \nu: \\ \nu' \cup \mu' \text{ is a partition}}} LS_{\nu' \cup \mu'}\left(- \left(\calA \cup \calB^{-1}\right); \calD\right) \sum_{\kappa} \schur_\kappa (\calC) \\
& \times (-1)^{e + f} \sum_{\substack{g,h \geq 0: \\ g + h = e}} \sum_{\substack{G,H \subset [e]: \\ G \cup_{g,h} H = [e]}} \sum_{p_1, \dots, p_g \geq 1} \left( \prod_{i = 1}^g \varepsilon_{G_i}^{p_i - 1} \right) \\
& \times \sum_{\substack{q,n \geq 0: \\ q + n \leq N - l(\calC)}} \sum_{\substack{q_1, \dots, q_h \geq 1: \\ q_1 + \dots + q_h = q}} \left( \prod_{i = 1}^h \varepsilon_{H_i}^{q_i - 1} \right) \sum_{\substack{n_1, \dots, n_f \geq 1: \\ n_1 + \dots + n_f = n}} \left( \prod_{j = 1}^f \varphi_j^{n_j - 1} \right) \sum_\pi \\
& \times \left( \sum_{\substack{\nu^{(1)}, \dots, \nu^{(h)}: \\ \nu \overset{q_1}{\to} \nu^{(1)} \overset{q_2}{\to} \dots \overset{q_h}{\to} \nu^{(h)} \\ \nu^{(h)} = \left\langle l(\calB)^N \right\rangle}} (-1)^{\height \left( \nu^{(h)} \setminus \nu \right)} \right)
\left( \sum_{\substack{\mu^{(1)}, \dots, \mu^{(g)}: \\ \mu \overset{p_1}{\to} \mu^{(1)} \overset{p_2}{\to} \dots \overset{p_g}{\to} \mu^{(g)} \\ \mu^{(g)} = \pi}} (-1)^{\height \left( \mu^{(g)} \setminus \mu \right)} \right) \\
& \times \left( \sum_{\substack{\kappa^{(1)}, \dots, \kappa^{(f)}: \\ \kappa \overset{n_1}{\to} \kappa^{(1)} \overset{n_2}{\to} \dots \overset{n_f}{\to} \kappa^{(f)} \\ \kappa^{(f)} = \pi}} (-1)^{\height \left( \kappa^{(f)} \setminus \kappa \right)} \right)
.
\end{align*}
First notice that under the assumption that the condition given in \eqref{4_in_proof_recipe_error_condition} is satisfied, the restriction to pairs of partitions $\mu$, $\nu$ so that $\nu' \cup \mu'$ is a partition is actually superfluous. Indeed,
$$\nu'_{l(\calB)} \geq N - q_1 - \dots - q_h \geq l(\calC) + n_1 + \dots + n_f \geq l(\pi) \geq l(\mu) = \mu'_1.$$
We use Lemma~\ref{4_lem_first_overlap_id} to write $LS_{\nu' \cup \mu'}\left(- \left( \calA \cup \calB^{-1}\right); \calD \right)$ as a sum of products of Littlewood-Schur functions that depend on $\nu$ or $\mu$ but not on both. This is permissible given that the elements of $\calA \cup \calB^{-1}$ are pairwise distinct and that $l(\calD) \leq l(\calA)$, which implies that the $(l(\calD), l(\calA) + l(\calB))$-index of any partition is less than $l(\calA)$. More concretely, we obtain
\begin{multline*} LS_{\nu' \cup \mu'}\left(- \left(\calA \cup \calB^{-1}\right); \calD\right) = \\ \sum_{\substack{\calS, \calT \subset \calA \cup \calB^{-1}: \\ \calS \cup_{l(\calB), l(\calA)} \calT \sorteq \calA \cup \calB^{-1}}}  \frac{LS_{\nu' + \left\langle l(\calA)^{l(\calB)} \right\rangle}(- \calS; \calD) LS_{\mu'}(- \calT; \calD)}{\Delta(\calT; \calS)}.
\end{multline*}
Again due to the fact that $l(\calD) \leq l(\calA)$, Corollary~\ref{4_cor_index=0} states that
\begin{multline*}
LS_{\nu' + \left\langle l(\calA)^{l(\calB)} \right\rangle}(- \calS; \calD) \\ = \Delta(\calD; \calS)\schur_{\nu' + \left\langle (l(\calA) - l(\calD))^{l(\calB)} \right\rangle} (-\calS) = \Delta(\calD; \calS) \elementary (-\calS)^{l(\calA) - l(\calD)} \schur_{\nu'}(-\calS).
\end{multline*}
Hence, rearranging the various sums in the main term yields
\begin{align} \label{4_in_proof_recipe_main_after_overlap_id}
\begin{split}
\main ={} & \elementary (-\calB)^N \sum_{\substack{\calS, \calT \subset \calA \cup \calB^{-1}: \\ \calS \cup_{l(\calB), l(\calA)} \calT \sorteq \calA \cup \calB^{-1}}} \elementary (-\calS)^{l(\calA) - l(\calD)} \frac{\Delta(\calD; \calS)}{\Delta(\calT; \calS)} \\
& \times (-1)^{e + f} \sum_{\substack{g,h \geq 0: \\ g + h = e}} \sum_{\substack{G,H \subset [e]: \\ G \cup_{g,h} H = [e]}} \sum_{p_1, \dots, p_g \geq 1} \left( \prod_{i = 1}^g \varepsilon_{G_i}^{p_i - 1} \right) \\
& \times \sum_{\substack{q,n \geq 0: \\ q + n \leq N - l(\calC)}} \sum_{\substack{q_1, \dots, q_h \geq 1: \\ q_1 + \dots + q_h = q}} \left( \prod_{i = 1}^h \varepsilon_{H_i}^{q_i - 1} \right) \sum_{\substack{n_1, \dots, n_f \geq 1: \\ n_1 + \dots + n_f = n}} \left( \prod_{j = 1}^f \varphi_j^{n_j - 1} \right) \\
& \times \sum_\nu \schur_{\nu'}(-\calS) \sum_{\substack{\nu^{(1)}, \dots, \nu^{(h)}: \\ \nu \overset{q_1}{\to} \nu^{(1)} \overset{q_2}{\to} \dots \overset{q_h}{\to} \nu^{(h)} \\ \nu^{(h)} = \left\langle l(\calB)^N \right\rangle}} (-1)^{\height \left( \nu^{(h)} \setminus \nu \right)} \\
& \times \sum_\mu LS_{\mu'}(- \calT; \calD) \sum_\pi \sum_{\substack{\mu^{(1)}, \dots, \mu^{(g)}: \\ \mu \overset{p_1}{\to} \mu^{(1)} \overset{p_2}{\to} \dots \overset{p_g}{\to} \mu^{(g)} \\ \mu^{(g)} = \pi}} (-1)^{\height \left( \mu^{(g)} \setminus \mu \right)} \\
& \times \sum_\kappa \schur_\kappa(\calC) \sum_{\substack{\kappa^{(1)}, \dots, \kappa^{(f)}: \\ \kappa \overset{n_1}{\to} \kappa^{(1)} \overset{n_2}{\to} \dots \overset{n_f}{\to} \kappa^{(f)} \\ \kappa^{(f)} = \pi}} (-1)^{\height \left( \kappa^{(f)} \setminus \kappa \right)}
.
\end{split}
\end{align}

Let us now focus on the three sums over ribbons:
\begin{align*}
\ribbon(q_1, \dots, q_h) \defeq{} & \sum_\nu \schur_{\nu'}(-\calS) \sum_{\substack{\nu^{(1)}, \dots, \nu^{(h)}: \\ \nu \overset{q_1}{\to} \nu^{(1)} \overset{q_2}{\to} \dots \overset{q_h}{\to} \nu^{(h)} \\ \nu^{(h)} = \left\langle l(\calB)^N \right\rangle}} (-1)^{\height \left( \nu^{(h)} \setminus \nu \right)} 
\displaybreak[2] \\
={} & \sum_{\substack{\nu^{(0)}, \nu^{(1)}, \dots, \nu^{(h - 1)}: \\ \nu^{(0)} \overset{q_1}{\to} \dots \overset{q_{h - 1}}{\to} \nu^{(h - 1)} \overset{q_h}{\to} \left\langle l(\calB)^N \right\rangle}} (-1)^{\height \left(\left\langle l(\calB)^N \right\rangle \setminus \nu^{(0)} \right)} \schur_{{\nu^{(0)}}'}(-\calS)
.
\intertext{The equality in \eqref{4_eq_height_conjugate_ribbon} allows us to get rid of the conjugation in the index of the Schur function. Since $\left\langle l(\calB)^N \right\rangle' = \left\langle N^{l(\calB)} \right\rangle$,}
\ribbon(q_1, \dots, q_h) ={} & (-1)^{q - h} \sum_{\makebox[103pt]{$\substack{\nu^{(0)}, \nu^{(1)}, \dots, \nu^{(h - 1)}: \\ \nu^{(0)} \overset{q_1}{\to} \dots \overset{q_{h - 1}}{\to} \nu^{(h - 1)} \overset{q_h}{\to} \left\langle N^{l(\calB)} \right\rangle}$}} (-1)^{\height \left(\left\langle N^{l(\calB)} \right\rangle \setminus \nu^{(0)} \right)} \schur_{\nu^{(0)}}(\rho^\alpha_{-\calS})
.
\intertext{Repeatedly applying Corollary \ref{4_cor_MN_negative_lambda} results in}
\ribbon(q_1, \dots, q_h) ={} & (-1)^{q - h} \left[ q_1 \frac{\partial}{\partial \power_{q_1}} \cdots q_h \frac{\partial}{\partial \power_{q_h}} \schur_{\left\langle N^{l(\calB)} \right\rangle} \right] \left( \rho^\alpha_{-\calS} \right)
.
\intertext{The theory of operators makes it apparent that $\ribbon(q_1, \dots, q_h)$ is independent of the order of the $q_i$. Without loss of generality, we may thus assume that $(q_1, \dots, q_h)$ is a partition of length $h$, say $\chi$. In this notation the preceding equality reads}
\ribbon(\chi) ={} & (-1)^{|\chi| - h} \left[ \prod_{i \geq 1} i^{m_i(\chi)} \frac{\partial}{\partial \power_\chi} \schur_{\left\langle N^{l(\calB)} \right\rangle} \right] \left( \rho^\alpha_{-\calS} \right).
\intertext{Given that $|\chi| = q \leq N$ and $l(\calS) = l(\calB)$, Corollary~\ref{4_cor_MN_negative_lambda} states that}
\ribbon(\chi) ={} & (-1)^{|\chi| - h} \schur_{\left\langle N^{l(\calB)} \right\rangle}(-\calS) \power_{-\chi} (-\calS) = (-1)^{|\chi| - h} \elementary(-\calS)^N \power_{-\chi} (-\calS)
.
\end{align*}
The remaining two sums over ribbons that appear in \eqref{4_in_proof_recipe_main_after_overlap_id} can in fact be viewed as one sum:
\begin{align*}
\ribbon(p_1, \dots, p_g; n_1, \dots, n_f) \defeq{} & \sum_\mu LS_{\mu'}(- \calT; \calD) \sum_\pi \sum_{\substack{\mu^{(1)}, \dots, \mu^{(g)}: \\ \mu \overset{p_1}{\to} \mu^{(1)} \overset{p_2}{\to} \dots \overset{p_g}{\to} \mu^{(g)} \\ \mu^{(g)} = \pi}} (-1)^{\height \left( \mu^{(g)} \setminus \mu \right)} \\
& \times \sum_\kappa \schur_\kappa(\calC) \sum_{\substack{\kappa^{(1)}, \dots, \kappa^{(f)}: \\ \kappa \overset{n_1}{\to} \kappa^{(1)} \overset{n_2}{\to} \dots \overset{n_f}{\to} \kappa^{(f)} \\ \kappa^{(f)} = \pi}} (-1)^{\height \left( \kappa^{(f)} \setminus \kappa \right)}
\\
={} & \sum_\mu LS_{\mu'}(- \calT; \calD) \\
& \times \sum_{\substack{\mu^{(1)}, \dots, \mu^{(g)}, \kappa^{(1)}, \dots, \kappa^{(f)}, \kappa: \\ \mu \overset{p_1}{\to} \mu^{(1)} \overset{p_2}{\to} \dots \overset{p_g}{\to} \mu^{(g)} = \kappa^{(f)} \overset{n_f}{\leftarrow} \dots \overset{n_2}{\leftarrow} \kappa^{(1)} \overset{n_1}{\leftarrow} \kappa \\ l\left(\mu^{(g)} \right) \leq N}} \\
& \times (-1)^{\height \left( \mu^{(g)} \setminus \mu \right)} (-1)^{\height \left( \kappa^{(f)} \setminus \kappa \right)} \schur_\kappa(\rho^\alpha_\calC)
.
\intertext{Repeatedly applying the Murnaghan-Nakayama rule and its dual (\textit{i.e.}\ Theorem~\ref{4_thm_MN_rule} and Corollary~\ref{4_cor_dual_MN_rule}) gives}
\ribbon(p_1, \dots, p_g; n_1, \dots, n_f) ={} & \sum_{\mu} \schur_\mu \left( \rho^\beta_{-\calT} \cup \rho^\alpha_\calD \right) \\
& \times \left[ n_1 \frac{\partial}{\partial \power_{n_1}} \cdots n_f \frac{\partial}{\partial \power_{n_f}} \power_{p_g} \cdots \power_{p_1} \schur_\mu \right] \left(\rho^\alpha_\calC\right)
\intertext{where we view the Littlewood-Schur function as a specialization of a Schur function, following Remark~\ref{4_rem_LS_as_specialization}. As above the theory of operators makes it obvious that $\ribbon(p_1, \dots, p_g; n_1, \dots, n_f)$ is symmetric in both $(p_1, \dots, p_g)$ and $(n_1, \dots, n_f)$, which we thus replace by the partitions $\psi$ and $\omega$ of lengths $g$ and $f$, respectively. This substitution yields}
\ribbon(\psi; \omega) ={} & \left[ \prod_{i \geq 1} i^{m_i(\omega)} \frac{\partial}{\partial \power_\omega} \power_\psi \sum_{\mu} \schur_\mu \left( \rho^\beta_{-\calT} \cup \rho^\alpha_\calD \right) \schur_\mu \right] \left(\rho^\alpha_\calC\right)
.
\intertext{Due to the power sum version of the Cauchy identity given in \eqref{4_eq_power_sum_version_cauchy_id}, this is equal to an expression that only involves power sums:}
\ribbon(\psi; \omega) ={} & \prod_{i \geq 1} i^{m_i(\omega)} \sum_\lambda z_\lambda^{-1} \power_\lambda\left( \rho^\beta_{-\calT} \cup \rho^\alpha_\calD \right) \left[ \frac{\partial}{\partial \power_\omega} \power_\psi \power_\lambda \right] \left(\rho^\alpha_\calC\right)
.
\intertext{According to Lemma~\ref{4_lem_reduction_operators}, this is equal to}
\ribbon(\psi; \omega) ={} & \prod_{i \geq 1} i^{m_i(\omega)} \sum_{\substack{\lambda, \xi: \\ \omega \cup \xi \sorteq \psi \cup \lambda}} z_\lambda^{-1} \power_\lambda\left( \rho^\beta_{-\calT} \cup \rho^\alpha_\calD \right) \\
& \times  \prod_{i \geq 1} \frac{m_i(\psi \cup \lambda)!}{m_i(\xi)!} \power_\xi(\calC).
\end{align*}

Incorporating these simplifications into the expression for the main term given in \eqref{4_in_proof_recipe_main_after_overlap_id} on page \pageref{4_in_proof_recipe_main_after_overlap_id} results in
\begin{align*}
\main ={} & \elementary (-\calB)^N \sum_{\substack{\calS, \calT \subset \calA \cup \calB^{-1}: \\ \calS \cup_{l(\calB), l(\calA)} \calT \sorteq \calA \cup \calB^{-1}}} \elementary (-\calS)^{N + l(\calA) - l(\calD)} \frac{\Delta(\calD; \calS)}{\Delta(\calT; \calS)} \\
& \times (-1)^{e + f} \sum_{\substack{g,h \geq 0: \\ g + h = e}} \sum_{\substack{G,H \subset [e]: \\ G \cup_{g,h} H = [e]}} \sum_{\substack{q,n \geq 0: \\ q + n \leq N - l(\calC)}} \left( \sum_{\substack{\chi: \\ l(\chi) = h \\ |\chi| = q}} \monomial_{\chi - \left\langle 1^h \right\rangle}\left(-\calE_H\right) \power_{-\chi}(-\calS) \right) \\
& \times \sum_{\substack{\psi: \\ l(\psi) = g}} \monomial_{\psi - \left\langle 1^g \right\rangle} \left(\calE_G\right) \sum_{\substack{\omega: \\ l(\omega) = f \\ |\omega| = n}} \left( \prod_{i \geq 1} i^{m_i(\omega)} \right) \monomial_{\omega - \left\langle 1^f \right\rangle} (\calF) \\
& \times \sum_{\substack{\lambda, \xi: \\ \omega \cup \xi \sorteq \psi \cup \lambda}} z_\lambda^{-1} \power_\lambda \left( \rho^\beta_{-\calT} \cup \rho^\alpha_{\calD} \right) \prod_{i \geq 1} \frac{m_i(\psi \cup \lambda)!}{m_i(\xi)!} \power_\xi (\calC)
.
\end{align*}
This is the main term stated in the Recipe up to elementary algebraic manipulations. Going back to our expression in \eqref{4_in_proof_recipe_lhs_before_main+error} for the integral on the left-hand side, we obtain the error term by considering all ribbons that do not satisfy the condition given in \eqref{4_in_proof_recipe_error_condition}. Taking absolute values inside the sums results in
\begin{align}
\begin{split}
|\error| \leq & \left| \elementary (\calB)^N \right| \sum_{\lambda} \left| LS_{\lambda'}\left(- \left(\calA \cup \calB^{-1}\right); \calD\right) \right| \sum_{\kappa} \left| \schur_\kappa (\calC) \right| \notag \end{split} \displaybreak[2] \\
\begin{split}
& \times \sum_{\substack{q,n \geq 0: \\ q + n > N - l(\calC)}} \sum_{\substack{m_1, \dots, m_e \geq 1: \\ q_1 + \dots + q_e = q}} \left( \prod_{i = 1}^e \left| \varepsilon_i^{m_i - 1} \right| \right) \sum_{\substack{n_1, \dots, n_f \geq 1: \\ n_1 + \dots + n_f = n}} \left( \prod_{j = 1}^f \left| \varphi_j^{n_j - 1} \right| \right) \notag \end{split} \displaybreak[2] \\
\begin{split}
& \times \sum_{\substack{\pi: \\ l(\pi) \leq N}} \left( \sum_{\substack{\lambda^{(1)}, \dots, \lambda^{(e)}: \\ \lambda \overset{m_1}{\to} \lambda^{(1)} \overset{m_2}{\to} \dots \overset{m_e}{\to} \lambda^{(e)} \\ \lambda^{(e)} = \pi + \left\langle l(\calB)^N \right\rangle}} 1 \right)
\left( \sum_{\substack{\kappa^{(1)}, \dots, \kappa^{(f)}: \\ \kappa \overset{n_1}{\to} \kappa^{(1)} \overset{n_2}{\to} \dots \overset{n_f}{\to} \kappa^{(f)} \\ \kappa^{(f)} = \pi}} 1 \right)
.
\notag \end{split}
\intertext{where $q_i$ stands for the number of boxes of the $m_i$-ribbon that are contained in the rectangle $\left\langle l(\calB)^N \right\rangle$. As before, we view each $m_i$-ribbon as a pair of ribbons, namely a $q_i$- and a $p_i$-ribbon that are contained in $\left\langle l(\calB)^N \right\rangle$ and $\pi$, respectively. At the cost of counting too many ribbons, we forget that each pair of ribbons can be combined to form one ribbon:}
\begin{split} \label{4_in_proof_recipe_error_before_O}
|\error| \leq & \left| \elementary (\calB)^N \right| \sum_{\substack{\mu, \nu: \\ \nu' \cup \mu' \text{ is a partition}}} \left| LS_{\nu' \cup \mu'}\left(- \left(\calA \cup \calB^{-1}\right); \calD \right) \right| \sum_{\kappa} \left| \schur_\kappa (\calC) \right| \end{split} \displaybreak[2] \\
\begin{split}
& \times \sum_{\substack{q,n \geq 0: \\ q + n > N - l(\calC)}} \sum_{\substack{p_1, \dots, p_e \geq 0 \\ q_1, \dots, q_e \geq 0: \\ q_1 + \dots + q_e = q}} \left( \prod_{i = 1}^e \left| \varepsilon_i^{q_i + p_i - 1} \right| \right) \sum_{\substack{n_1, \dots, n_f \geq 1: \\ n_1 + \dots + n_f = n}} \left( \prod_{j = 1}^f \left| \varphi_j^{n_j - 1} \right| \right) \notag \end{split} \displaybreak[2] \\
\begin{split}
& \times \sum_{\substack{\pi: \\ l(\pi) \leq N}} \left( \sum_{\substack{\nu^{(1)}, \dots, \nu^{(e)}: \\ \nu \overset{q_1}{\to} \nu^{(1)} \overset{q_2}{\to} \dots \overset{q_e}{\to} \nu^{(e)} \\ \nu^{(e)} = \left\langle l(\calB)^N \right\rangle}} 1 \right)
\left( \sum_{\substack{\mu^{(1)}, \dots, \mu^{(e)}: \\ \mu \overset{p_1}{\to} \mu^{(1)} \overset{p_2}{\to} \dots \overset{p_e}{\to} \mu^{(e)} \\ \mu^{(e)} = \pi}} 1 \right)
\left( \sum_{\substack{\kappa^{(1)}, \dots, \kappa^{(f)}: \\ \kappa \overset{n_1}{\to} \kappa^{(1)} \overset{n_2}{\to} \dots \overset{n_f}{\to} \kappa^{(f)} \\ \kappa^{(f)} = \pi}} 1 \right)
.
\notag \end{split}
\intertext{The next step mirrors our derivation of the main term: We separate the partitions $\mu$ and $\nu$ in $LS_{\nu' \cup \mu'}\left(- \left(\calA \cup \calB^{-1}\right); \calD\right)$ by an application of Lemma~\ref{4_lem_first_overlap_id}, and then forget the condition that the union $\nu' \cup \mu'$ must still be a partition. In addition, we eliminate the dummy variable $\pi$ to combine the sequences of sums over the $p_i$- and $n_j$-ribbons:}
\begin{split} 
|\error| \leq & \left| \elementary (\calB)^N \right| \sum_{\substack{\calS, \calT \subset \calA \cup \calB^{-1}: \\ \calS \cup_{l(\calB), l(\calA)} \calT \sorteq \calA \cup \calB^{-1}}} \left| \elementary(\calS)^{l(\calA) - l(\calD)} \right| \frac{\left|\Delta(\calD; \calS)\right|}{\left|\Delta(\calT; \calS) \right|} \\
& \times \sum_{\kappa, \mu, \nu} \left| \schur_{\nu'} (\calS) \right| \left| LS_{\mu'}(- \calT; \calD) \right| \left| \schur_\kappa (\calC) \right|
\notag \end{split} \displaybreak[2] \\
\begin{split}
& \times \sum_{\substack{q,n \geq 0: \\ q + n > N - l(\calC)}} \sum_{\substack{p_1, \dots, p_e \geq 0 \\ q_1, \dots, q_e \geq 0: \\ q_1 + \dots + q_e = q}} \left( \prod_{i = 1}^e \left| \varepsilon_i^{q_i + p_i - 1} \right| \right) \sum_{\substack{n_1, \dots, n_f \geq 1: \\ n_1 + \dots + n_f = n}} \left( \prod_{j = 1}^f \left| \varphi_j^{n_j - 1} \right| \right) 
\notag \end{split} \displaybreak[2] \\
\begin{split}
& \times \left( \sum_{\substack{\nu^{(1)}, \dots, \nu^{(e)}: \\ \nu \overset{q_1}{\to} \nu^{(1)} \overset{q_2}{\to} \dots \overset{q_e}{\to} \nu^{(e)} \\ \nu^{(e)} = \left\langle l(\calB)^N \right\rangle}} 1 \right) \left( \sum_{\substack{\mu^{(1)}, \dots, \mu^{(e)}, \kappa^{(1)}, \dots, \kappa^{(f)}, \kappa: \\ \mu \overset{p_1}{\to} \mu^{(1)} \overset{p_2}{\to} \dots \overset{p_e}{\to} \mu^{(e)} = \kappa^{(f)} \overset{n_f}{\leftarrow} \dots \overset{n_2}{\leftarrow} \kappa^{(1)} \overset{n_1}{\leftarrow} \kappa \\ l \left( \mu^{(e)} \right) \leq N}} 1 \right)
.
\notag \end{split}
\intertext{Lemma~\ref{4_lem_bound_number_of_ribbons_rectangle} provides an upper bound for the two ribbon-counting sequences of sums. Indeed, all partitions $\nu^{(\cdot)}$ are contained in a rectangle of width $l(\calB)$, while all $\mu^{(\cdot)}$ and $\kappa^{(\cdot)}$ are contained in a rectangle of height $N$. We conclude that}
\begin{split} \label{4_eq_error_bound_for_recipe}
\error ={} & O_{l(\calA), l(\calB), l(\calD), l(\calE), \calA \cup \calB^{-1}} \left(N^{(l(\calE) + l(\calF) - 1)^+} \right) \left| \elementary (\calB)^N \right| \\
& \times \sum_{\substack{q,n \geq 0: \\ q + n > N - l(\calC)}} \left( \sum_{\substack{q_1, \dots, q_{l(\calE)} \geq 0: \\ q_1 + \dots + q_{l(\calE)} = q}} 1 \right) \left( \sum_{\substack{n_1, \dots, n_{l(\calF)} \geq 1: \\ n_1 + \dots + n_{l(\calF)} = n}} \prod_{j = 1}^{l(\calF)} \left| \calF_j^{n_j - 1} \right| \right) \\
& \times \left( \sum_{p_1, \dots, p_{l(\calE)} \geq 0} \prod_{i = 1}^{l(\calE)} \left| \calE_i^{q_i + p_i - 1} \right| \right) \\
& \times \sum_{\substack{\calS, \calT \subset \calA \cup \calB^{-1}: \\ \calS \cup_{l(\calB), l(\calA)} \calT \sorteq \calA \cup \calB^{-1}}} \left( \sum_{\substack{\nu: \\ \nu \subset \left\langle l(\calB)^N \right\rangle \\ |\nu| = l(\calB) N - q}} \left| \schur_{\nu'}(\calS) \right| \right) \left( \sum_{\kappa, \mu} \left| LS_\mu(\calD; -\calT) \right| \left| \schur_\kappa (\calC) \right| \right)
\end{split}
\end{align}
where the last sum is over pairs of partitions $\kappa, \mu$ so that there exists a partition which can be obtained by adding $n$ boxes to $\kappa$ or by adding $p = p_1 + \dots + p_e$ boxes to $\mu$.
\end{proof}

\subsection{The results} \label{4_sec_results}
In this section we present four theorems that can be viewed as special cases of the Recipe. In these instances we are able to give reasonable bounds for the error terms, unlike in the full generality of the Recipe. While the two formulas for averages of products of ratios \emph{and} logarithmic derivatives seem to be new, formulas for pure ratios and pure products of logarithmic derivatives can be found in the literature. In fact, our expression for the ratios is just a reformulation of Bump and Gamburd's ratio theorem \cite{bump06}. The logarithmic derivative theorem presented here gives a neater and more combinatorial expression for the leading term of Conrey and Snaith's expression for averages of logarithmic derivatives \cite{CS}. 

\begin{thm}[ratios] \label{4_thm_ratios} Let $\calA$, $\calB$, $\calC$ and $\calD$ be sets of non-zero variables so that the latter two only contain elements that are strictly less than 1 in absolute value. Let the elements of $\calA \cup \calB^{-1}$ be pairwise distinct. If $l(\calD) \leq N + l(\calA)$ and $l(\calC) \leq N$, then
\begin{align}
\begin{split} \label{4_thm_ratios_eq}
& \hspace{-15pt} \int_{U(N)} \frac{\prod_{\alpha \in \calA} \chi_g(\alpha) \prod_{\beta \in \calB} \chi_{g^{-1}}(\beta)}{\prod_{\delta \in \calD} \chi_g(\delta) \prod_{\gamma \in \calC} \chi_{g^{-1}} (\gamma)} dg \\
={} & \elementary (-\calB)^N \sum_{\makebox[70pt]{$\substack{\calS, \calT \subset \calA \cup \calB^{-1}: \\ \calS \cup_{l(\calB), l(\calA)} \calT \sorteq \calA \cup \calB^{-1}}$}} \elementary (-\calS)^{N + l(\calA) - l(\calD)} \frac{\Delta(\calD; \calS)}{\Delta(\calT; \calS)} \prod_{\substack{\gamma \in \calC \\ \delta \in \calD}} (1 - \gamma \delta)^{-1} \prod_{\substack{t \in \calT \\ \gamma \in \calC}} (1 - t \gamma)
.
\end{split}
\end{align}
\end{thm}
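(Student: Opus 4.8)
The plan is to run the argument in the proof of the Recipe in the degenerate case $\calE = \calF = \emptyset$, where the whole construction collapses: with no variables in $\calE$ or $\calF$ there are no power sums to insert into the integrand, the Murnaghan--Nakayama rule is never invoked, and \emph{no error is introduced}. Concretely, I would first rewrite the integrand. Since $\abs(\calC), \abs(\calD) < 1$, the same algebraic manipulations together with the generalized Cauchy identity (Proposition~\ref{4_prop_gen_Cauchy}) express the ratio as $\elementary(-\calB)^N \elementary(\calR(g))^{l(\calB)} \bigl[\sum_\lambda LS_{\lambda'}(-(\calA\cup\calB^{-1}); \calD)\,\overline{\schur_\lambda(\calR(g))}\bigr]\bigl[\sum_\kappa \schur_\kappa(\calC)\schur_\kappa(\calR(g))\bigr]$, exactly as at the opening of the Recipe's proof but with the two power-sum factors absent.

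Next I would integrate term by term. Using $\elementary(\calR(g))^{l(\calB)}\schur_\kappa(\calR(g)) = \schur_{\kappa+\langle l(\calB)^N\rangle}(\calR(g))$ and Schur orthogonality (Lemma~\ref{4_lem_Schur_ortho}), the integral of $\overline{\schur_\lambda(\calR(g))}\,\schur_{\kappa+\langle l(\calB)^N\rangle}(\calR(g))$ is $1$ precisely when $\lambda = \kappa + \langle l(\calB)^N\rangle$ and $l(\lambda)\le N$, and $0$ otherwise. Here the hypothesis $l(\calC)\le N$ does the work: $\schur_\kappa(\calC)$ vanishes unless $l(\kappa)\le l(\calC)\le N$, so the length constraint $l(\lambda)\le N$ is automatic and imposes no further restriction. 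This leaves the exact, error-free expression $\elementary(-\calB)^N \sum_\kappa LS_{(\kappa+\langle l(\calB)^N\rangle)'}(-(\calA\cup\calB^{-1});\calD)\,\schur_\kappa(\calC)$, the sum ranging over all partitions $\kappa$.

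The heart of the computation is to simplify the Littlewood--Schur factor. Conjugating, $(\kappa+\langle l(\calB)^N\rangle)' = \langle N^{l(\calB)}\rangle \cup \kappa'$, and I would feed this into the first overlap identity (Lemma~\ref{4_lem_first_overlap_id}) with splitting parameter $l = l(\calB)$, factoring it as a sum over $\calS\cup_{l(\calB),l(\calA)}\calT \sorteq \calA\cup\calB^{-1}$ of $\Delta(\calT;\calS)^{-1}\,LS_{\langle(N+l(\calA))^{l(\calB)}\rangle}(-\calS;\calD)\,LS_{\kappa'}(-\calT;\calD)$. The rectangular piece is handled by Corollary~\ref{4_cor_index=0}, which yields the $\kappa$-independent factor $\Delta(\calD;\calS)\,\elementary(-\calS)^{N+l(\calA)-l(\calD)}$. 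Finally, recognizing $LS_{\kappa'}(-\calT;\calD) = \schur_\kappa(\rho^\beta_{-\calT}\cup\rho^\alpha_\calD)$ via the specialization viewpoint of Remark~\ref{4_rem_LS_as_specialization}, the residual sum $\sum_\kappa \schur_\kappa(\rho^\beta_{-\calT}\cup\rho^\alpha_\calD)\schur_\kappa(\calC)$ collapses by the Cauchy identity. The cleanest route is Proposition~\ref{4_prop_gen_Cauchy} with an empty fourth set of variables (so that the $sv$-hypothesis is vacuous and only $\abs(\calC\calD)<1$ is needed); equivalently, feeding $\power_k(\rho^\beta_{-\calT}\cup\rho^\alpha_\calD) = \power_k(\calD)-\power_k(\calT)$ into \eqref{4_eq_power_sum_version_cauchy_id}. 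Either way one obtains exactly $\prod_{\gamma\in\calC,\delta\in\calD}(1-\gamma\delta)^{-1}\prod_{t\in\calT,\gamma\in\calC}(1-t\gamma)$, and assembling the three factors reproduces the right-hand side of \eqref{4_thm_ratios_eq}.

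I expect the main obstacle to be the bookkeeping around the index hypotheses, which is precisely where $l(\calD)\le N+l(\calA)$ enters and where the present statement is genuinely more general than a literal specialization of the Recipe (whose blanket assumption is $l(\calD)\le l(\calA)$). One must check two things: that the $(l(\calD), l(\calA)+l(\calB))$-index of $\langle N^{l(\calB)}\rangle\cup\kappa'$ does not exceed $l(\calA)$, so that $l=l(\calB)$ is admissible in Lemma~\ref{4_lem_first_overlap_id}; and that the $(l(\calD),l(\calB))$-index of the rectangle $\langle(N+l(\calA))^{l(\calB)}\rangle$ is exactly $0$, so that Corollary~\ref{4_cor_index=0} applies. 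Both reduce to whether a single box in row $l(\calB)$ lies inside the relevant rectangle, and in each case this holds precisely when $l(\calD)\le N+l(\calA)$ --- so the hypothesis is not merely sufficient but exactly calibrated. Because the rectangular part of the partition here is the full $\langle N^{l(\calB)}\rangle$ (a consequence of $\calE=\calF=\emptyset$), this weaker condition suffices, whereas in the general Recipe the analogous partitions are only near-rectangular and force the stronger bound.
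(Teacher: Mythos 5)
Your proposal is correct and follows essentially the same route as the paper: the paper likewise proves the theorem by specializing the Recipe to $\calE = \calF = \emptyset$ (where the error term vanishes once $l(\calC) \leq N$) and then observing that, because the relevant partition $\nu'$ is the full rectangle $\left\langle N^{l(\calB)} \right\rangle$ in this case, the index condition needed for the first overlap identity holds under the weaker hypothesis $l(\calD) \leq N + l(\calA)$. Your unwinding of the Recipe's proof --- Cauchy rewriting, Schur orthogonality, overlap identity with $l = l(\calB)$, Corollary~\ref{4_cor_index=0} on $\left\langle (N + l(\calA))^{l(\calB)} \right\rangle$, and the final Cauchy collapse of the $\kappa$-sum --- matches the paper's argument step for step, and is in fact slightly more explicit than the paper in verifying that the second place where the index hypothesis enters (the application of Corollary~\ref{4_cor_index=0}) is also covered by $l(\calD) \leq N + l(\calA)$.
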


\begin{rem*} Theorem~\ref{4_thm_ratios} is basically the ratio theorem presented in \cite{bump06}, except for the assumptions on the lengths of the sets of variables. This does not come as a surprise given that the proof of the Recipe is based on Bump and Gamburd's approach. Their theorem holds under the assumption that $l(\calC) + l(\calD) \leq N$. In fact, they only state $l(\calC)$, $l(\calD) \leq N$ as a requirement but their proof implicitly makes us of the stronger assumption: on page 246 of \cite{bump06} they apply Proposition 8 (a weaker version of Lemma~\ref{4_lem_first_overlap_id}), which is only permissible if $l(\calC) + l(\calD) \leq N$.
\end{rem*}

\begin{proof} In a first step, let us suppose that $l(\calD) \leq l(\calA)$, in which case the equality in \eqref{4_thm_ratios_eq} follows from the Recipe. We set $\calE = \emptyset = \calF$ in \eqref{4_rem_LS_as_specialization}. Under the assumption that $l(\calC) \leq N$, the error term vanishes. The main term simplifies to
\begin{align*}
\elementary (-\calB)^N \sum_{\makebox[75pt]{$\substack{\calS, \calT \subset \calA \cup \calB^{-1}: \\ \calS \cup_{l(\calB), l(\calA)} \calT \sorteq \calA \cup \calB^{-1}}$}} \elementary (-\calS)^{N + l(\calA) - l(\calD)} \frac{\Delta(\calD; \calS)}{\Delta(\calT; \calS)} \sum_\lambda z_\lambda^{-1} \power_\lambda \left( \rho^\beta_{-\calT} \cup \rho^\alpha_{\calD} \right) \power_\lambda(\calC)
.
\end{align*}
We write the sum over $\lambda$ as a product: According to Lemma~\ref{4_lem_cauchy_identity_schur} and Remark~\ref{4_rem_LS_as_specialization},
\begin{align*}
\sum_\lambda z_\lambda^{-1} \power_\lambda \left( \rho^\beta_{-\calT} \cup \rho^\alpha_{\calD} \right) \power_\lambda(\calC) ={} & \sum_\mu \schur_\mu \left( \rho^\beta_{-\calT} \cup \rho^\alpha_{\calD} \right) \schur_\mu(\calC) \\
={} & \sum_\mu LS_\mu (\calD; -\calT) \schur_\mu(\calC) \\
={} & \prod_{\substack{\gamma \in \calC \\ \delta \in \calD}} (1 - \gamma \delta)^{-1} \prod_{\substack{t \in \calT \\ \gamma \in \calC}} (1 - t \gamma)
\end{align*}
where the last equality is a consequence of the generalized Cauchy identity (\textit{i.e.}\ Proposition~\ref{4_prop_gen_Cauchy}).

In order to justify the equality in \eqref{4_thm_ratios_eq} in case $l(\calD) \leq N + l(\calA)$, it suffices to note that in the proof of the Recipe the role of the assumption that $l(\calD) \leq l(\calA)$ is to ensure that the $(l(\calD), l(\calA) + l(\calB))$-index of of $\nu' \cup \mu'$ is less than $l(\calA)$, which makes it permissible to apply Lemma~\ref{4_lem_first_overlap_id} to $LS_{\nu' \cup \mu'}\left(- \left(\calA \cup \calB^{-1}\right); \calD\right)$. Given that $\calE = \emptyset$, the partition $\nu'$ is equal to the rectangle $\left\langle N^{l(\calB)} \right\rangle$. Therefore, the $(l(\calD), l(\calA) + l(\calB))$-index of $\nu' \cup \mu'$ is less than $l(\calA)$ whenever $l(\calD) \leq N + l(\calA)$.
\end{proof}

\begin{thm} \label{4_thm_log_ders_and_ratio_first} Let $\calA$, $\calB$, $\calC$, $\calD$ and $\calE$ be sets of non-zero variables so that the elements of $\calA \cup \calB^{-1}$ are pairwise distinct and $l(\calD) \leq l(\calA)$. If $\abs(\calA)$, $\abs(\calB) \leq 1$, $\abs(\calC)$, $\abs(\calD) < 1$ and there exists $r \in \R$ so that $\abs(\calE) \leq r < 1$, then
\begin{align*}
& \hspace{-15pt} \int_{U(N)} \frac{\prod_{\alpha \in \calA} \chi_g(\alpha) \prod_{\beta \in \calB} \chi_{g^{-1}}(\beta)}{\prod_{\delta \in \calD} \chi_g(\delta) \prod_{\gamma \in \calC} \chi_{g^{-1}} (\gamma)} \prod_{\varepsilon \in \calE} \frac{\chi'_g(\varepsilon)}{\chi_g(\varepsilon)} dg \\
={} & \elementary (-\calB)^N \sum_{\substack{\calS, \calT \subset \calA \cup \calB^{-1}: \\ \calS \cup_{l(\calB), l(\calA)} \calT \sorteq \calA \cup \calB^{-1}}} \elementary (-\calS)^{N + l(\calA) - l(\calD)} \frac{\Delta(\calD; \calS)}{\Delta(\calT; \calS)} \prod_{\substack{\gamma \in \calC \\ \delta \in \calD}} (1 - \gamma \delta)^{-1} \prod_{\substack{t \in \calT \\ \gamma \in \calC}} (1 - t \gamma) \\
& \times (-1)^{l(\calE)} \sum_{\substack{\calE', \calE'' \subset \calE: \\ \calE' \cup \calE'' \sorteq \calE}} \\
& \times \left( \sum_{\substack{\chi: \\ l(\chi) = l\left(\calE''\right) \\ |\chi| \leq N - l(\calC)}} \monomial_{\chi - \left\langle 1^{l\left(\calE''\right)} \right\rangle}\left(-\calE''\right) \power_{-\chi}(-\calS) \right) \left( \sum_{\substack{\psi: \\ l(\psi) = l\left(\calE'\right)}} \monomial_{\psi - \left\langle 1^{l\left(\calE'\right)} \right\rangle} \left(\calE'\right) \power_\psi(\calC) \right)
\\
& + O_{r, \calA, \calB, l(\calC), l(\calD), l(\calE), \max\{\abs(\calC), \abs(\calD)\}} \left( r^N N^{(l(\calB) - 1)^+ + 2(l(\calE) - 1)^+}\right)
.
\end{align*}
If, in addition to the conditions stated above, $\abs(\calB) = r_1 \leq r$ for some $r_1 \in \R$, then the bound on the error term can be improved by a factor of $r^N$.
\end{thm}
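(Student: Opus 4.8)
The plan is to obtain this theorem as the specialization $\calF = \emptyset$ of the Recipe, so the work splits into collapsing the Recipe's main term and extracting a usable bound from its error estimate \eqref{4_eq_error_bound_for_recipe}. First I would check the Recipe's hypotheses: the elements of $\calA \cup \calB^{-1}$ are pairwise distinct, $l(\calD) \leq l(\calA)$, and $\abs(\calC), \abs(\calD) < 1$, all of which are assumed here. Setting $\calF = \emptyset$ forces $f = 0$, hence $n = 0$ and $\omega$ equal to the empty partition throughout; the condition $\omega \cup \xi \sorteq \psi \cup \lambda$ and the factor $\prod_i m_i(\psi \cup \lambda)!/m_i(\xi)!$ then collapse to $\xi = \psi \cup \lambda$ and a trivial factor, so the innermost sum reduces to $\power_\psi(\calC) \sum_\lambda z_\lambda^{-1} \power_\lambda(\rho^\beta_{-\calT} \cup \rho^\alpha_\calD) \power_\lambda(\calC)$.

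Next I would simplify this $\lambda$-sum exactly as in the proof of Theorem~\ref{4_thm_ratios}: by the power sum version of the Cauchy identity \eqref{4_eq_power_sum_version_cauchy_id} and Remark~\ref{4_rem_LS_as_specialization} it equals $\sum_\mu LS_\mu(\calD; -\calT) \schur_\mu(\calC)$, which the generalized Cauchy identity (Proposition~\ref{4_prop_gen_Cauchy}, applicable since the set paired with $\calC$ is empty) turns into $\prod_{\gamma, \delta} (1 - \gamma\delta)^{-1} \prod_{t, \gamma} (1 - t\gamma)$. Pulling this product out of the $\psi$- and $\chi$-sums and matching $\calE' = \calE_G$, $\calE'' = \calE_H$ reproduces exactly the stated main term.

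The real content is the error term. Starting from \eqref{4_eq_error_bound_for_recipe} with $\calF = \emptyset$ I would set $n = 0$, so the summation condition becomes $q > N - l(\calC)$ and the $\calF$-factor is $1$, and then bound the remaining pieces separately. The coupled factor $\prod_i |\calE_i^{q_i + p_i - 1}|$ splits: the $p_i$-sum is a convergent geometric series since $|\calE_i| \leq r < 1$, while $\sum_{q_1 + \dots + q_{l(\calE)} = q} \prod_i |\calE_i^{q_i - 1}|$ is bounded by a constant times $\complete_q(r, \dots, r) = O(q^{l(\calE) - 1} r^q)$. The inner $\kappa, \mu$-sum, constrained by $\mu \subseteq \kappa$ and $|\kappa| - |\mu| = p$, I would control using $\abs(\calC), \abs(\calD) < 1$: Lemmas~\ref{4_lem_comb_bound_schur} and \ref{4_lem_comb_bound_LS} give geometric decay $R_\calC^{|\kappa|} R_\calD^{|\mu|}$ with $R_\calC R_\calD < 1$ (and $l(\kappa), l(\mu) \leq l(\calC)$), so that sum is $O(R_\calC^p \, \mathrm{poly}(p))$, which folds harmlessly into the modified geometric $p$-sum. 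The delicate factor is $|\elementary(\calB)^N| \sum_\nu |\schur_{\nu'}(\calS)|$: since $\nu \subset \langle l(\calB)^N \rangle$ with $|\nu| = l(\calB)N - q$, the conjugate $\nu'$ has at most $l(\calB)$ parts of size at most $N$, giving only $O(N^{(l(\calB) - 1)^+})$ such $\nu$ for fixed $q$, and for each the second bound of Lemma~\ref{4_lem_det_bound_Schur} together with $\abs(\calB) \leq 1$ yields $|\elementary(\calB)^N \schur_{\nu'}(\calS)| \leq \prod_{\beta : \beta^{-1} \notin \calS} |\beta|^N \leq 1$, the large ($\calB^{-1}$) elements of $\calS$ cancelling $\elementary(\calB)^N$. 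Summing $\sum_{q > N - l(\calC)} O(q^{l(\calE) - 1} r^q) = O(N^{l(\calE) - 1} r^N)$ and multiplying by the $N^{(l(\calE) - 1)^+}$ pulled out of \eqref{4_eq_error_bound_for_recipe} and the $N^{(l(\calB) - 1)^+}$ from the $\nu$-count gives the claimed $O(r^N N^{(l(\calB) - 1)^+ + 2(l(\calE) - 1)^+})$.

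For the improvement under $\abs(\calB) \leq r_1 \leq r$, I would revisit only the factor $\elementary(\calB)^N \schur_{\nu'}(\calS)$ and show it is $O(r^N)$ rather than $O(1)$, splitting into two cases. If $\calS \neq \calB^{-1}$, at least one $\beta$ has $\beta^{-1} \notin \calS$, and since now every element of $\calB$ has modulus at most $r_1 < 1$ the cancellation leaves $\prod_{\beta^{-1} \notin \calS} |\beta|^N \leq r_1^N \leq r^N$. If $\calS = \calB^{-1}$, the complement identity $\schur_{\nu'}(\calB^{-1}) = \elementary(\calB^{-1})^N \schur_{\widetilde{\nu'}}(\calB)$ gives $\elementary(\calB)^N \schur_{\nu'}(\calB^{-1}) = \pm \schur_{\widetilde{\nu'}}(\calB)$ with $|\widetilde{\nu'}| = q$, and the first bound of Lemma~\ref{4_lem_det_bound_Schur} yields $O(r_1^q) = O(r_1^{N - l(\calC)}) = O(r^N)$. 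In both cases the error acquires the extra factor $r^N$. I expect the hardest step to be the error analysis, and within it the simultaneous control of the coupled $\kappa, \mu$-sum and of the cancellation in $\elementary(\calB)^N \schur_{\nu'}(\calS)$; the main-term collapse and the improvement are comparatively routine once these bounds are in hand.
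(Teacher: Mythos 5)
Your proposal is correct and follows essentially the same route as the paper's own proof: specialize the Recipe at $\calF = \emptyset$, collapse the $\lambda$-sum via the power-sum Cauchy identity \eqref{4_eq_power_sum_version_cauchy_id}, Remark~\ref{4_rem_LS_as_specialization} and Proposition~\ref{4_prop_gen_Cauchy} exactly as in Theorem~\ref{4_thm_ratios}, and then bound the error \eqref{4_eq_error_bound_for_recipe} by splitting the $\calE$-factors into the $q$- and $p$-parts, controlling the $\kappa,\mu$-sum with Lemmas~\ref{4_lem_comb_bound_schur} and \ref{4_lem_comb_bound_LS}, counting the $\nu \subset \left\langle l(\calB)^N \right\rangle$ and bounding $\elementary(\calB)^N \schur_{\nu'}(\calS)$ via Lemma~\ref{4_lem_det_bound_Schur}, and finishing with geometric series. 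The only (immaterial) deviation is in the improvement step, where the paper applies the first bound of Lemma~\ref{4_lem_det_bound_Schur} uniformly to every $\calS$ (using $\abs(\calS) \leq r_1^{-1}$ and $|\elementary(\calB)|^N = r_1^{l(\calB)N}$, giving $O(r^q)$ per term) rather than your case split $\calS = \calB^{-1}$ versus $\calS \neq \calB^{-1}$ with the complementation identity; both arguments deliver the extra factor $r^N$.
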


\begin{proof} We set $\calF = \emptyset$ in the statement of the Recipe. The main term of the expression on the right-hand side simplifies to
\begin{align*}
& \elementary (-\calB)^N \sum_{\substack{\calS, \calT \subset \calA \cup \calB^{-1}: \\ \calS \cup_{l(\calB), l(\calA)} \calT \sorteq \calA \cup \calB^{-1}}} \elementary (-\calS)^{N + l(\calA) - l(\calD)} \frac{\Delta(\calD; \calS)}{\Delta(\calT; \calS)} \\
& \times (-1)^{l(\calE)} \sum_{\substack{\calE', \calE'' \subset \calE: \\ \calE' \cup \calE'' \sorteq \calE}} \left( \sum_{\substack{\chi: \\ l(\chi) = l\left(\calE''\right) \\ |\chi| \leq N - l(\calC)}} \monomial_{\chi - \left\langle 1^{l\left(\calE''\right)} \right\rangle}\left(-\calE''\right) \power_{-\chi}(-\calS) \right) \\
& \times \left( \sum_{\substack{\psi: \\ l(\psi) = l\left(\calE'\right)}} \monomial_{\psi - \left\langle 1^{l\left(\calE'\right)} \right\rangle} \left(\calE'\right) \power_\psi(\calC) \right) 
\left( \sum_\lambda z_\lambda^{-1} \power_\lambda \left( \rho^\beta_{-\calT} \cup \rho^\alpha_{\calD} \right) \power_\lambda (\calC) \right)
.
\end{align*}
As in the proof of Theorem~\ref{4_thm_ratios}, the generalized Cauchy identity allows us to replace the sum over $\lambda$ by a product. This yields the main term of this theorem.

It remains to bound the error given in equation \eqref{4_eq_error_bound_for_recipe} on page \pageref{4_eq_error_bound_for_recipe}. We exploit that $\abs(\calE) \leq r$ and $n = 0$ (since $\calF = \emptyset$) to infer the following bound:
\begin{align*}
\error ={} & O_{r, l(\calA), l(\calB), l(\calD), l(\calE), \calA \cup \calB^{-1}} \left( N^{(l(\calE) - 1)^+} \right) \left| \elementary (\calB)^N \right| \displaybreak[2]\\
& \times \sum_{\substack{q > N - l(\calC)}} r^q \left( \sum_{\substack{q_1, \dots, q_{l(\calE)} \geq 0: \\ q_1 + \dots + q_{l(\calE)} = q}} 1 \right) \sum_{p \geq 0} r^p \left( \sum_{\substack{p_1, \dots, p_{l(\calE)} \geq 0: \\ p_1 + \dots + p_{l(\calE)} = p}} 1 \right) \displaybreak[2]\\
& \times \sum_{\substack{\calS, \calT \subset \calA \cup \calB^{-1}: \\ \calS \cup_{l(\calB), l(\calA)} \calT \sorteq \calA \cup \calB^{-1}}} \left( \sum_{\substack{\nu: \\ \nu \subset \left\langle l(\calB)^N \right\rangle \\ |\nu| = l(\calB) N - q}} \left| \schur_{\nu'}(\calS) \right| \right) \\
& \hspace{86pt} \times \left( \sum_{\substack{\kappa, \mu: \\ \mu \subset \kappa \\ |\mu| + p = |\kappa|}} \left| LS_\mu(\calD; -\calT) \right| \left| \schur_\kappa (\calC) \right| \right)
.
\end{align*}
As $\schur_\kappa(\calC)$ vanishes if $l(\kappa) > l(\calC)$, we have that $l(\mu) \leq l(\kappa) \leq l(\calC)$ for all partitions that appear in the sum over $\kappa$ and $\mu$. Setting $R = \max\{\abs(\calC), \abs(\calD)\} < 1$, Lemmas~\ref{4_lem_comb_bound_schur} and \ref{4_lem_comb_bound_LS} thus entail that
\begin{multline*}
\sum_{\substack{\kappa, \mu: \\ \mu \subset \kappa \\ |\mu| + p = |\kappa|}} \left| LS_\mu(\calD; -\calT) \right| \left| \schur_\kappa (\calC) \right|
={} \\ \sum_{\makebox[44pt]{$\substack{\kappa, \mu: \\ l(\mu), l(\kappa) \leq l(\calC) \\ |\mu| + p = |\kappa|}$}} O_{l(\calC), R, l(\calA), \calA \cup \calB^{-1}} \left( |\mu|^{l(\calD)^2 + l(\calC)^2} R^{|\mu|} |\kappa|^{l(\calC)^2} R^{|\kappa|} \right)
={} \\ O_{l(\calC), l(\calD), R, l(\calA), \calA \cup \calB^{-1}} \left( p^{l(\calC)^2 + (l(\calC) - 1)^+} R^p\right) 
\end{multline*}
where we have crudely bounded the number of partitions of length $n$ and size $m$ by $(m + 1)^{(n - 1)^+}$. Bounding the number of integers $p_1, \dots, p_{l(\calE)} \geq 0$ whose sum equals $p$ by $(p + 1)^{(l(\calE) -1)^+}$, another argument based on geometric series thus allows us to conclude that the sum over $p$ is $O_{l(\calC), l(\calD), l(\calE), \max\{\abs(\calC), \abs(\calD)\}, l(\calA), \calA \cup \calB^{-1}} (1)$.

Two applications of Lemma~\ref{4_lem_det_bound_Schur} will allow us to bound 
\begin{align*}
S(N) \defeq{} &
\left| \elementary (\calB)^N \right| \sum_{\substack{\calS, \calT \subset \calA \cup \calB^{-1}: \\ \calS \cup_{l(\calB), l(\calA)} \calT \sorteq \calA \cup \calB^{-1}}} \sum_{\substack{\nu: \\ \nu \subset \left\langle l(\calB)^N \right\rangle \\ |\nu| = l(\calB) N - q}} \left| \schur_{\nu'}(\calS) \right|
.
\end{align*}
Suppose that $\abs(\calB) = r_1$ for some $r_1 \leq r$. As $\abs(\calA) \leq 1 < r_1^{-1}$, the first statement in Lemma~\ref{4_lem_det_bound_Schur} implies that
\begin{align*}
S(N) ={} & O_{\calA \cup \calB^{-1}} \left( r_1^{Nl(\calB)} \sum_{\substack{\nu: \\ \nu \subset \left\langle l(\calB)^N \right\rangle \\ |\nu| = l(\calB) N - q}} r_1^{-|\nu|} \right) = O_{\calA \cup \calB^{-1}} \left( r^q N^{(l(\calB) - 1)^+} \right)
.
\end{align*}
This last bound is due to the fact that for $l(\calB) \geq 1$, the number of partitions $\nu$ of some fixed size $Q$ that are contained in the rectangle $\left\langle l(\calB)^N \right\rangle$ is at most $(N + 1)^{l(\calB) - 1}$. Indeed, $\nu'_1$ to $\nu'_{l(\calB) - 1}$ are some integers between 0 and $N$, while $\nu'_{l(\calB)}$ is determined by the condition that $\nu'_1 + \dots + \nu'_{l(\calB)} = Q$.
If we only assume that $\abs(\calA)$, $\abs(\calB) \leq 1$, the second bound in Lemma~\ref{4_lem_det_bound_Schur} allows us to infer that
\begin{align*}
S(N) ={} & O_{\calA \cup \calB^{-1}} \left( \elementary (\calB)^N \sum_{\substack{\nu: \\ \nu \subset \left\langle l(\calB)^N \right\rangle \\ |\nu| = l(\calB) N - q}} \elementary \left( \calB^{-1} \right)^{\nu'_1} \right)
= O_{\calA \cup \calB^{-1}} \left( N^{(l(\calB) - 1)^+}\right)
,
\end{align*}
since $\nu'_1 \leq N$ for all partitions $\nu$ that appear in the sum. In conclusion, 
\begin{align*}
\error = O_\calP \left( N^{(l(\calE) - 1)^+ + (l(\calB) - 1)^+} \right) \sum_{q > N - l(\calC)} r^{q(1 + \delta(\abs(\calB) = r_1))} (q + 1)^{(l(\calE) - 1)^+}
\end{align*}
where $\delta(\abs(\calB) = r_1)$ indicates whether the additional condition on $\calB$ is satisfied. Here, the implicit constant depends on $$\calP = \{r, l(\calC), l(\calD), l(\calE), \max\{\abs(\calC), \abs(\calD)\}, \calA, \calB \}.$$ The bound stated in the theorem follows from yet another argument based on geometric series.
\end{proof}

\begin{thm} \label{4_thm_log_ders_and_ratio} Let $r \in \R$ with $r < 1$. Let $\calB$, $\calC$, $\calE$ and $\calF$ be sets of non-zero variables so that $\abs(\calB) \leq 1$, $\abs(\calC) < 1$ and $\abs(\calE)$, $\abs(\calF) \leq r$. Then
\begin{align} \label{4_thm_log_ders_and_ratio_eq}
\begin{split}
& \hspace{-15pt} \int_{U(N)} \frac{\prod_{\beta \in \calB} \chi_{g^{-1}}(\beta)}{\prod_{\gamma \in \calC} \chi_{g^{-1}} (\gamma)} \prod_{\varepsilon \in \calE} \frac{\chi'_g(\varepsilon)}{\chi_g(\varepsilon)} \prod_{\varphi \in \calF} \frac{\chi'_{g^{-1}}(\varphi)}{\chi_{g^{-1}}(\varphi)}dg \\
={} & (-1)^{l(\calE) + l(\calF)} \sum_{\substack{\calE', \calE'' \subset \calE: \\ \calE' \cup \calE'' \sorteq \calE}} \Bigg[ \left( \sum_{\substack{\chi: \\ l(\chi) = l\left(\calE''\right)}} \monomial_{\chi - \left\langle 1^{l\left(\calE''\right)} \right\rangle}\left(-\calE''\right) \power_\chi(-\calB) \right) \\
& \times \sum_{\substack{\psi: \\ l(\psi) = l\left(\calE'\right)}} \monomial_{\psi - \left\langle 1^{l\left(\calE'\right)} \right\rangle} \left(\calE'\right) \sum_{\substack{\omega: \\ \omega \subset \psi \\ l(\omega) = l(\calF)}} \monomial_{\omega - \left\langle 1^{l(\calF)} \right\rangle} (\calF) \prod_{i \geq 1} \frac{i^{m_i(\omega)} m_i(\psi)!}{m_i(\psi \setminus \omega)!} \power_{\psi \setminus \omega}(\calC) \Bigg] \\
\\
& + \error
.
\end{split}
\end{align}
In particular, the main term vanishes unless $l(\calF) \leq l(\calE)$. To provide a bound for the error term we require one of the following additional conditions on the set of variables $\calB$. If there exists a real number $r_1 \leq r$ with $\abs(\calB) = r_1$, then 
\begin{align*}
\error ={} & O_\calP \left( r^{2N} N^{l(\calB)^2 + (l(\calB) - 1)^+ + (l(\calE) - 1)^+ + (l(\calF) - 1)^+ + (l(\calE) + l(\calF) - 1)^+ + 2} \right).
\intertext{where the implicit constant depends on $\calP = \{ r, l(\calB), l(\calC), l(\calE), l(\calF), \max(\abs(\calC)) \}$. If the elements of $\calB$ are pairwise distinct, then}
\error ={} & O_\calP \left( r^N N^{(l(\calB) - 1)^+ + (l(\calE) - 1)^+ + (l(\calF) - 1)^+ + (l(\calE) + l(\calF) - 1)^+ + 2} \right).
\end{align*}
where the implicit constant depends on $\calP = \{r, \calB, l(\calC), l(\calE), l(\calF), \max(\abs(\calC))\}$.
\end{thm}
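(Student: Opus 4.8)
The plan is to read off Theorem~\ref{4_thm_log_ders_and_ratio} from the Recipe by setting $\calA = \emptyset = \calD$ and then post-processing the main term and the error bound. First I would substitute $\calA = \emptyset = \calD$ into \eqref{4_recipe_eq}. Since $l(\calA) = 0$, the constraint $l(\calT) = l(\calA)$ forces $\calT = \emptyset$ and $\calS = \calB^{-1}$, so the outer sum collapses to a single term. There $\elementary(-\calB)^N \elementary(-\calS)^N = \elementary(-\calB)^N \elementary(-\calB^{-1})^N = 1$, the two $\Delta$-factors are empty products, and $\power_{-\chi}(-\calS) = \power_{-\chi}(-\calB^{-1}) = \power_\chi(-\calB)$ straight from the definition of $\power_{-\chi}$. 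The decisive collapse is the factor $\power_\lambda(\rho^\beta_{-\calT} \cup \rho^\alpha_\calD)$: with $\calT = \emptyset = \calD$ this is the empty specialization applied to $\power_\lambda$, hence $0$ unless $\lambda$ is empty and $1$ when $\lambda = \emptyset$. Only $\lambda = \emptyset$ survives, which turns the constraint $\omega \cup \xi \sorteq \psi \cup \lambda$ into $\xi = \psi \setminus \omega$ with $\omega \subset \psi$, and reduces the weight to $\prod_{i \ge 1} i^{m_i(\omega)} m_i(\psi)!/m_i(\psi \setminus \omega)!$. What remains is precisely the bracketed summand of \eqref{4_thm_log_ders_and_ratio_eq}, except that $\chi$ and $\omega$ still carry the Recipe's size bookkeeping $|\chi| = q$, $|\omega| = n$ with $q + n \le N - l(\calC)$.

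Second I would drop the restriction $q + n \le N - l(\calC)$. Extending the sums over $\chi$ and $\omega$ to all sizes gives the main term exactly as stated, at the price of an extra error equal to the same summand summed over $q + n > N - l(\calC)$, which I would absorb into $\error$. The vanishing statement is then immediate: $\omega \subset \psi$ with $l(\omega) = l(\calF)$ and $l(\psi) = l(\calE') \le l(\calE)$ is impossible once $l(\calF) > l(\calE)$, so every summand is an empty sum and the main term is $0$.

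Third I would bound the total error, namely the Recipe error \eqref{4_eq_error_bound_for_recipe} (specialized by $\calS = \calB^{-1}$, $\calT = \emptyset$, where $LS_\mu(\emptyset; \emptyset) = \delta_{\mu \emptyset}$ forces $\mu = \emptyset$) plus the extension tail from the previous step. In both pieces I would track the exponential decay factor by factor. The hypotheses $\abs(\calE'')$, $\abs(\calF) \le r$ make $\monomial_{\chi - \langle 1 \rangle}(-\calE'') = O(r^q)$ and $\monomial_{\omega - \langle 1 \rangle}(\calF) = O(r^n)$; the $\calC$-factor contributes a power $R^{|\psi| - n}$ in the extension tail (resp.\ $R^{p - n}$ in the Recipe error) with $R = \max\abs(\calC) < 1$, and summing the remaining free index against it --- the partition $\psi$ in the extension tail, resp.\ the out-of-rectangle ribbon sizes $p$ in the Recipe error --- yields an additional $O(r^n)$, so the $\calF$-contribution in fact decays like $r^{2n}$ (the geometric series, in $|\psi|$ resp.\ $p$, converges because $rR < 1$). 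The $\calB$-factor governs the $q$-decay, and here the two cases split. If the elements of $\calB$ are pairwise distinct I would bound $\schur_{\nu'}(\calB^{-1})$ by the determinantal estimate of Lemma~\ref{4_lem_det_bound_Schur} and use $\power_\chi(-\calB) = O(1)$ for the tail; with $\abs(\calB) \le 1$ neither gives exponential gain in $q$, so summing $O(r^q r^{2n})$ over $q + n > N - l(\calC)$ produces $O(r^N)$ and no $N^{l(\calB)^2}$. If instead $\abs(\calB) = r_1 \le r$ I would bound $\schur_{\nu'}(\calB^{-1})$ by the combinatorial estimate of Lemma~\ref{4_lem_comb_bound_schur}, which costs the factor $N^{l(\calB)^2}$ but, paired with $\elementary(\calB)^N$, yields an extra $r_1^q$ (and $\power_\chi(-\calB) = O(r_1^q)$ for the tail); the summand becomes $O(r^{2q} r^{2n}) = O(r^{2(q+n)})$ and the tail over $q + n > N - l(\calC)$ is $O(r^{2N})$. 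The surviving polynomial factors come from the ribbon counts of Lemma~\ref{4_lem_bound_number_of_ribbons_rectangle} (already packaged in the $N^{(l(\calE) + l(\calF) - 1)^+}$ prefactor of \eqref{4_eq_error_bound_for_recipe}), from counting partitions $\chi$, $\omega$, $\nu$ of bounded length, and from the boundary of the double geometric summation in $q$ and $n$; together these reproduce the stated exponents.

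The main obstacle is this error estimate, and within it the sharp tracking of the rate through the $\calC$-interaction. The subtle point is that the $\calC$-power sums carry no decay in $n$ by themselves: the crucial factor $r^{2n}$ only appears after the auxiliary summation (over $\psi \supseteq \omega$ in the tail, over $p$ in the Recipe error) is carried out, and it is exactly this doubling of the $\calF$-rate, together with the $\calB$-gain $r_1^q$ that is present only when $\abs(\calB) = r_1$, that separates the $r^{2N}$ bound from the $r^N$ bound. A secondary but necessary check is that the extension tail satisfies the same bound as the Recipe error in each case, so that adding the two does not weaken the estimate.
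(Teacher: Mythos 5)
Your overall route is the paper's: specialize the Recipe to $\calA = \emptyset = \calD$, collapse the sum over $\calS,\calT$ to the single term $\calS = \calB^{-1}$, $\calT = \emptyset$, use that $\power_\lambda\left(\rho^\beta_\emptyset \cup \rho^\alpha_\emptyset\right)$ kills everything but $\lambda = \emptyset$, read off the vanishing criterion from $\omega \subset \psi$, drop the constraint $q + n \leq N - l(\calC)$ at the cost of a tail, and bound tail plus Recipe error with the two separate treatments of $\calB$ (the combinatorial bound of Lemma~\ref{4_lem_comb_bound_schur} when $\abs(\calB) = r_1$, the determinantal bound of Lemma~\ref{4_lem_det_bound_Schur} when $\calB$ has pairwise distinct elements). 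Your accounting of the decay rates --- $r^q$ versus $r^{2q}$ in the $\calB$-direction, and the doubling $r^n \mapsto r^{2n}$ coming from $|\psi| \geq |\omega| = n$ in the tail, respectively $|\kappa| = p - n \geq 0$ in the Recipe error --- also matches the paper's.

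However, there is a genuine gap at your very first step: the Recipe is stated (and proved) only under the hypothesis that the elements of $\calA \cup \calB^{-1}$ are \emph{pairwise distinct}, and Theorem~\ref{4_thm_log_ders_and_ratio} does not assume this. In the case $\abs(\calB) = r_1$ the set $\calB$ may contain repeated elements, and moreover the theorem's first error bound requires an implicit constant depending only on $l(\calB)$, whereas the constant in \eqref{4_eq_error_bound_for_recipe} is allowed to depend on the actual variables $\calA \cup \calB^{-1}$ --- in its derivation it absorbs quotients like $\Delta(\calD;\calS)/\Delta(\calT;\calS)$, so a priori it blows up as elements of $\calB$ coalesce, and no naive limiting argument can rescue an asymptotic bound whose constant degenerates. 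So ``substituting $\calA = \emptyset = \calD$ into \eqref{4_recipe_eq}'' is not permissible as a black-box application. The paper resolves exactly this point before doing anything else: for the main-term identity it perturbs the elements of $\calB$ and lets the perturbation vanish (both sides are polynomial in $\calB$); for the error it re-inspects the proof of the Recipe and checks that when $l(\calA) = 0 = l(\calD)$ all $\calB$-dependence of the error reduces to $\left| \elementary(\calB)^N \right| \sum_\nu \left| \schur_{\nu'}\left(\calB^{-1}\right) \right|$ --- because $LS_{\nu' \cup \mu'}\left(-\calB^{-1}; \emptyset\right) = \schur_{\nu' \cup \mu'}\left(-\calB^{-1}\right)$ forces $\mu = \emptyset$, and neither the overlap identity nor any Vandermonde denominator ever enters --- so the constant genuinely depends only on $l(\calB)$. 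Your subsequent estimates do use $\calB$-uniform bounds and would go through, but without this verification the inequality you start from is simply not available in the generality the theorem claims.
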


\begin{proof} The idea of the proof is to view this statement as a special case of the Recipe by setting $\calA = \emptyset = \calD$. Technically, this is not permissible since the elements of $\calB$ are not assumed to be pairwise distinct. However, for the main term it is enough to slightly perturb the elements of $\calB$ before applying the Recipe, and then make the perturbations vanish. For the error term, one quickly checks the proof of the Recipe to see that the implicit constant does in fact not depend on $\calB$ itself - but only on its length - in case $l(\calA) = 0 = l(\calD)$: the terms on the right-hand side in \eqref{4_in_proof_recipe_error_before_O} that depend on $\calB$ are
\begin{align*} 
\text{terms}(\calB) \defeq{} & \left| \elementary (\calB)^N \right| \sum_{\substack{\mu, \nu: \\ \nu' \cup \mu' \text{ is a partition} \\ \nu \subset \left\langle l(\calB)^N \right\rangle}} \left| LS_{\nu' \cup \mu'}\left(- \left(\calA \cup \calB^{-1}\right); \calD \right) \right|
.
\intertext{Under the assumption that $\calA = \emptyset = \calD$,}
\text{terms}(\calB) ={} & \left| \elementary (\calB)^N \right| \sum_{\substack{\mu, \nu: \\ \nu' \cup \mu' \text{ is a partition} \\ \nu' \subset \left\langle N^{l(\calB)} \right\rangle}} \left| \schur_{\nu' \cup \mu'} \left( -\calB^{-1} \right) \right|
.
\intertext{Moreover, the fact that any Schur function $\schur_\lambda(\calX)$ vanishes whenever $l(\lambda) > l(\calX)$ entails that only terms with $\mu = \emptyset$ contribute to the sum. Hence, we are left with} 
\text{terms}(\calB) ={} & \left| \elementary (\calB)^N \right| \sum_{\substack{\nu: \\ \nu' \subset \left\langle N^{l(\calB)} \right\rangle}} \left| \schur_{\nu'} \left(-\calB^{-1}\right) \right|
,
\end{align*}
which also appears in \eqref{4_eq_error_bound_for_recipe}, restricted to $\calA = \emptyset = \calD$.

Having resolved this technicality, we now set $\calA = \emptyset = \calD$ in the Recipe. It easily follows from the explicit expression for $z_\lambda^{-1} \power_\lambda\left(\rho^\beta_\emptyset \cup \rho^\alpha_\emptyset\right)$ given in \eqref{4_eq_union_of_specializations_power_sum} that this power sum vanishes unless $\lambda = \emptyset$. Thus, the main term on the right-hand side of the equality in \eqref{4_recipe_eq} simplifies to
\begin{align*}
\main ={} & (-1)^{l(\calE) + l(\calF)} \sum_{\substack{\calE', \calE'' \subset \calE: \\ \calE' \cup \calE'' \sorteq \calE}} \sum_{\substack{q,n \geq 0: \\ q + n \leq N - l(\calC)}} \left( \sum_{\substack{\chi: \\ l(\chi) = l\left(\calE''\right) \\ |\chi| = q}} \monomial_{\chi - \left\langle 1^{l\left(\calE''\right)} \right\rangle} \left(-\calE''\right) \power_\chi(-\calB) \right) \\
& \times \sum_{\substack{\psi: \\ l(\psi) = l\left(\calE'\right)}} \monomial_{\psi - \left\langle 1^{l\left(\calE'\right)} \right\rangle} \left(\calE'\right) \\
& \times \sum_{\substack{\omega: \\ \omega \subset \psi \\ l(\omega) = l(\calF) \\ |\omega| = n}} \monomial_{\omega - \left\langle 1^{l(\calF)} \right\rangle} (\calF) \prod_{i \geq 1} \frac{i^{m_i(\omega)} m_i(\psi)!}{m_i(\psi \setminus \omega)!} \power_{\psi \setminus \omega}(\calC)
.
\end{align*}
Owing to the condition that $\omega$ be a subsequence of $\psi$, this expression vanishes unless $l(\calF) \leq l(\calE)$. In addition, this condition allows us to eliminate the dependence on $N$ at the cost of incurring an error that is $$O_{r, l(\calB), l(\calC), l(\calE), l(\calF)} \left( r^N N^{(l(\calE) - 1)^+ + (l(\calF) - 1)^+ + l(\calF) + 1}\right).$$ Under the assumption that $\abs(\calB) \leq r$, the bound may even be multiplied by $r^N$. Indeed, the sum over $q,n \geq 0$ so that $q + n > N - l(\calC)$ is 
\begin{align*}
& O \left( \sum_{\substack{n,p,q \geq 0: \\ q + n > N - l(\calC) \\ p \geq n}} r^{n + p + q(1 + \delta(\abs(\calB) \leq r))} n^{l(\calF)} \sum_{\substack{\calE', \calE'' \subset \calE: \\ \calE' \cup \calE'' \sorteq \calE}} \sum_{\substack{\chi: \\ l(\chi) = l\left(\calE''\right) \\ |\chi| = q}} \sum_{\substack{\psi: \\ l(\psi) = l\left(\calE'\right) \\ |\psi| = p}} \sum_{\substack{\omega: \\ l(\omega) = l(\calF) \\ |\omega| = n}} 1 \right)
\end{align*}
where $\delta(\abs(\calB) \leq r)$ indicates whether $\abs(\calB) \leq r$. Handling the sums counting partitions as in the preceding proof, and then employing an argument based on geometric series gives the desired bound.

It remains to show the bound on the error inherited from the Recipe. Given that $\calA = \emptyset = \calD$, the formula in \eqref{4_eq_error_bound_for_recipe} simplifies to
\begin{align*}
\error ={} & O_{r, l(\calB), l(\calE), l(\calF)} \left(N^{(l(\calE) + l(\calF) - 1)^+} \right) \left| \elementary (\calB)^N \right| \sum_{\substack{q,n \geq 0: \\ q + n > N - l(\calC)}} r^{q + n} \\
& \times \left(\sum_{\substack{q_1, \dots, q_{l(\calE)} \geq 0: \\ q_1 + \dots + q_{l(\calE)} = q}} 1 \right) \left( \sum_{\substack{n_1, \dots, n_{l(\calF)} \geq 1: \\ n_1 + \dots + n_{l(\calF)} = n}} 1 \right) \sum_{p \geq 0} r^p \left( \sum_{\substack{p_1, \dots, p_{l(\calE)} \geq 0: \\ p_1 + \dots + p_{l(\calE)} = p}} 1 \right) \\
& \times \left( \sum_{\substack{\nu: \\ \nu \subset \left\langle l(\calB)^N \right\rangle \\ |\nu| = l(\calB) N - q}} \left| \schur_{\nu'} \left(\calB^{-1}\right) \right| \right) \left( \sum_{\substack{\kappa: \\ n + |\kappa| = p}} \left| \schur_\kappa (\calC) \right| \right)
\end{align*}
where we have also used that $\abs(\calE)$, $\abs(\calF) \leq r$. First, consider the following function of $N$, which also depends on $\calB$:
\begin{align*}
S_q(N) \defeq{} & \left| \elementary (\calB)^N \right| \sum_{\substack{\nu: \\ \nu \subset \left\langle l(\calB)^N \right\rangle \\ |\nu| = l(\calB) N - q}} \left| \schur_{\nu'} \left(\calB^{-1}\right) \right|
.
\end{align*}
Under the assumption that $\abs(\calB) = r_1 \leq r$, Lemma~\ref{4_lem_comb_bound_schur} allows us to give an asymptotic bound for $S_q(N)$. More concretely,
\begin{align*}
S_q(N) ={} & O\left( r_1^{l(\calB) N} (l(\calB) N - q)^{l(\calB)^2} r_1^{-l(\calB) N + q} \right) \left( \sum_{\substack{\nu: \\ \nu \subset \left\langle l(\calB)^N \right\rangle \\ |\nu| = l(\calB) N - q}} 1 \right) \\
={} & O_{l(\calB)} \left( r^q N^{l(\calB)^2 + (l(\calB) - 1)^+} \right)
\end{align*}
since the number of partitions $\nu$ that appear in the sum is $O_{l(\calB)} \left( N^{(l(\calB) - 1)^+} \right)$.
On the other hand, if we suppose that the elements of $\calB$ are pairwise distinct, then the second statement of Lemma~\ref{4_lem_det_bound_Schur} provides the following bound for $S_q(N)$:
\begin{align*}
S_q(N) ={} & O_{\calB} \left( \elementary (\calB)^N \sum_{\substack{\nu: \\ \nu \subset \left\langle l(\calB)^N \right\rangle \\ |\nu| = l(\calB) N - q}} \elementary \left( \calB^{-1} \right)^{\nu'_1} \right) = O_\calB \left( N^{(l(\calB) - 1)^+} \right)
.
\end{align*}
Keeping these two bounds for $S_q(N)$ in mind, we proceed to bound the part of the error that is independent of $\calB$. As $\abs(\calC) < 1$, Lemma~\ref{4_lem_comb_bound_schur} entails that
$$ \sum_\kappa \left| \schur_\kappa(\calC) \right| = O_{l(\calC), \max(\abs(\calC))}(1)
.
$$
Hence,
\begin{align*}
\error ={} & O_{r, l(\calB), l(\calC), l(\calE), l(\calF), \max(\abs(\calC))} \left( N^{(l(\calE) + l(\calF) - 1)^+} \right) \\
& \times \sum_{\substack{q,n \geq 0: \\ q + n > N - l(\calC)}} S_q(N) q^{(l(\calE) - 1)^+} r^{q + n} n^{(l(\calF) - 1)^+} \sum_{p \geq n} r^p p^{(l(\calE) - 1 )^+}
\intertext{where we have used that the condition on $|\kappa|$ implies that $p \geq n$. An argument based on geometric series gives}
\error ={} & O_{r, l(\calB), l(\calC), l(\calE), l(\calF) \max(\abs(\calC))} \left( N^{(l(\calE) + l(\calF) - 1)^+} \right) \\
& \times \sum_{\substack{q,n \geq 0: \\ q + n > N - l(\calC)}} S_q(N) r^{q + 2n} (q + n)^{(l(\calE) - 1)^+ + (l(\calF) - 1)^+}
.
\end{align*}
Replacing $S_q(N)$ by the appropriate bound concludes the proof.
\end{proof}

\begin{thm} [logarithmic derivatives] \label{4_thm_log_ders} Let $r \in \R$, let $\calE$ and $\calF$ be sets of variables so that $0 < \abs(\calE), \abs(\calF) \leq r < 1$. Then
\begin{align*}
& \hspace{-15pt} \int_{U(N)} \prod_{\varepsilon \in \calE} \frac{\chi'_g(\varepsilon)}{\chi_g(\varepsilon)} \prod_{\varphi \in \calF} \frac{\chi'_{g^{-1}}(\varphi)}{\chi_{g^{-1}}(\varphi)} dg \\
={} & \begin{dcases} \sum_{\substack{\lambda: \\ l(\lambda) = l(\calE)}} z_\lambda \monomial_{\lambda - \left\langle 1^{l(\calE)} \right\rangle}(\calE) \monomial_{\lambda - \left\langle 1^{l(\calE)} \right\rangle}(\calF) &\text{if } l(\calE) = l(\calF) \\ 0 &\text{otherwise}\end{dcases} \\
& + O_{r, l(\calE), l(\calF)} \left( r^{2N} N^{(l(\calE) - 1)^+ + (l(\calF) - 1)^+ + (l(\calE) + l(\calF) - 1)^+ + 2} \right).
\end{align*}
\end{thm}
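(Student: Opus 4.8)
The plan is to obtain this theorem as the special case $\calB = \emptyset = \calC$ of Theorem~\ref{4_thm_log_ders_and_ratio}. All hypotheses of that theorem then hold vacuously: we still have $\abs(\calB) \leq 1$ and $\abs(\calC) < 1$, while $\abs(\calE), \abs(\calF) \leq r < 1$ is given, and under these specializations the integrand in \eqref{4_thm_log_ders_and_ratio_eq} reduces to exactly the product of logarithmic derivatives appearing here. It therefore remains only to simplify both the main term and the error term.

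For the main term I would exploit that the power sum $\power_k$ of the empty set of variables vanishes for every $k \geq 1$. In the first bracketed factor of \eqref{4_thm_log_ders_and_ratio_eq} the partition $\chi$ runs over partitions of length $l(\calE'')$, all of whose parts are at least $1$; hence $\power_\chi(-\calB) = \power_\chi(\emptyset)$ vanishes unless $\calE'' = \emptyset$, in which case $\calE' \sorteq \calE$ and the factor equals the empty-partition contribution $1$. Similarly $\power_{\psi \setminus \omega}(\calC) = \power_{\psi \setminus \omega}(\emptyset)$ forces $\psi \setminus \omega = \emptyset$, that is $\omega = \psi$. Since $l(\psi) = l(\calE') = l(\calE)$ while $l(\omega) = l(\calF)$, this constraint can be satisfied only when $l(\calE) = l(\calF)$; otherwise the $\omega$-sum is empty and the main term vanishes, as claimed.

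When $\omega = \psi$ the multiplicity product collapses: using $m_i(\psi \setminus \omega) = m_i(\emptyset) = 0$ and $0! = 1$ we obtain
\[
\prod_{i \geq 1} \frac{i^{m_i(\omega)}\, m_i(\psi)!}{m_i(\psi \setminus \omega)!} = \prod_{i \geq 1} i^{m_i(\psi)}\, m_i(\psi)! = z_\psi,
\]
while the prefactor $(-1)^{l(\calE) + l(\calF)}$ equals $1$ because $l(\calE) = l(\calF)$. After renaming $\psi = \lambda$, the surviving summand is $z_\lambda\, \monomial_{\lambda - \left\langle 1^{l(\calE)} \right\rangle}(\calE)\, \monomial_{\lambda - \left\langle 1^{l(\calE)} \right\rangle}(\calF)$ summed over partitions $\lambda$ of length $l(\calE)$, which is precisely the stated main term.

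For the error term I would invoke the first of the two bounds in Theorem~\ref{4_thm_log_ders_and_ratio}: since $\calB = \emptyset$ the hypothesis ``$\abs(\calB) = r_1 \leq r$'' is satisfied vacuously (concretely, $\elementary(\calB)^N = 1$ and the inner sum forces $\nu = \emptyset$, so the quantity $S_q(N)$ appearing in that proof reduces to $\delta_{q,0}$). Setting $l(\calB) = 0$ annihilates the factor $N^{l(\calB)^2 + (l(\calB)-1)^+}$ and reduces the parameter set $\calP$ to $\{r, l(\calE), l(\calF)\}$, leaving exactly $O_{r, l(\calE), l(\calF)}\bigl(r^{2N} N^{(l(\calE)-1)^+ + (l(\calF)-1)^+ + (l(\calE)+l(\calF)-1)^+ + 2}\bigr)$. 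The whole argument is a routine specialization, so no serious obstacle arises; the one point demanding genuine care is the vanishing argument for power sums over the empty alphabet together with the collapse of the multiplicity product to $z_\lambda$, and that is where I would concentrate the write-up.
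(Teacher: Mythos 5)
Your proposal is correct and follows essentially the same route as the paper: the paper's proof likewise specializes Theorem~\ref{4_thm_log_ders_and_ratio} to $\calB = \emptyset = \calC$, uses that $\power_\lambda(\emptyset) = 0$ unless $\lambda = \emptyset$ to force $\calE'' = \emptyset$ and $\omega = \psi$ (hence $l(\calE) = l(\calF)$), and collapses the multiplicity product to $z_\lambda$. Your treatment of the error term — noting that the constant-modulus hypothesis on $\calB$ holds vacuously for $\calB = \emptyset$, so the $r^{2N}$ bound applies with $l(\calB) = 0$ — is in fact spelled out more carefully than in the paper, which simply asserts the simplified bound.
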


We have made no effort to optimize the exponent of $N$ in the bound for the error term.

\begin{proof} We set $\calB = \emptyset = \calC$ in Theorem~\ref{4_thm_log_ders_and_ratio}. As $\power_\lambda(\emptyset) = 0$ unless $\lambda = \emptyset$, the right-hand side of the equality in \eqref{4_thm_log_ders_and_ratio_eq} simplifies to
\begin{align*}
& (-1)^{l(\calE) + l(\calF)} \sum_{\substack{\psi: \\ l(\psi) = l(\calE)}} \monomial_{\psi - \left\langle 1^{l(\calE)} \right\rangle} (\calE) \sum_{\substack{\omega: \\ \omega = \psi \\ l(\omega) = l(\calF)}} \prod_{i \geq 1} i^{m_i(\omega)} m_i(\psi)! \monomial_{\omega - \left\langle 1^{l(\calF)} \right\rangle} (\calF) \\
& + O_{r, l(\calE), l(\calF)} \left( r^{2N} N^{(l(\calE) - 1)^+ + (l(\calF) - 1)^+ + (l(\calE) + l(\calF) - 1)^+ + 2} \right)
,
\end{align*}
which entails that the main term vanishes unless $l(\calE) = l(\calF)$. The expression stated in the theorem is obtained by substituting $\lambda$ for both $\psi$ and $\omega$.
\end{proof}

\begin{rem*} In \cite[p.~486]{CS}, Conrey and Snaith derive a formula for 
\begin{align*} 
\int_{U(N)} \prod_{\alpha \in \calA} \left(-e^{-\alpha} \right) \frac{\chi_g'\left(e^{-\alpha}\right)}{\chi_g\left(e^{-\alpha}\right)} \prod_{\beta \in \calB} \left(-e^{-\beta}\right) \frac{\chi_{g^{-1}}'\left(e^{-\beta}\right)}{\chi_{g^{-1}}\left(e^{-\beta}\right)} dg
\end{align*}
without employing any combinatorial methods. Compared to the logarithmic derivative theorem presented in this paper, their formula has the distinct advantage of providing an exact expression for the integral. Its principal disadvantage is that this expression is rather complicated, which makes it cumbersome to use. In Conrey and Snaith's theorem, it is not immediately obvious, for instance, that the leading term vanishes unless $l(\calA) = l(\calB)$. Hence, Theorem~\ref{4_thm_log_ders} is an improvement because it provides a simple expression in terms of one of the standard bases for the ring of symmetric functions.
\end{rem*}

\section{From logarithmic derivatives to an explicit formula} \label{4_sec_explicit_formula}
This section is dedicated to an application of the logarithmic derivative theorem, which is motivated by the analogy between $L$-functions and characteristic polynomials alluded to in the introduction. We present an explicit formula for eigenvalues whose derivation mirrors the proof of the explicit formula for zeros of $L$-functions given in \cite{rudnick1996}. As Rudnick and Sarnak's proof is based on completed $L$-functions, which are more natural to work with than classic $L$-functions, we introduce the analogous notion of completed characteristic polynomials. In addition, we give a formula for products of logarithmic derivatives of completed characteristic polynomials.

\begin{defn} [completed characteristic polynomial] \label{4_defn_completed_char_pol}
For unitary matrices $g$ that satisfy $\det(-g) \neq -1$, we define the completed characteristic polynomial as
\begin{align*}
\Lambda_g(z) ={} & \det(-g)^{1/2} z^{-N/2} \chi_g(z).
\end{align*}
Notice that while the characteristic polynomial $\chi_g$ is an entire function, $\Lambda_g$ might only be defined on $\C \setminus \R_-$.
\end{defn}

Our reason for considering the completed characteristic polynomial is the following symmetry with respect to the transformation given by $z \mapsto z^{-1}$, which is a basic linear algebra exercise.

\begin{lem} [functional equation] \label{4_lem_funct_eq_char_pol}
For $g \in U(N)$ with $\det(-g) \neq -1$ the following equalities hold.
\begin{enumerate}
\item For all $z \in \C \setminus \R_-$, $\Lambda_g(z) = \Lambda_{g^{-1}} \left( z^{-1}\right)$.
\item For all $z \in \C$ that are not eigenvalues of $g$, $\displaystyle z \frac{\Lambda_g'(z)}{\Lambda_g(z)} = - z^{-1} \frac{\Lambda_{g^{-1}}' \left( z^{-1} \right)}{\Lambda_{g^{-1}} \left( z^{-1} \right)}$.
\end{enumerate}
\end{lem}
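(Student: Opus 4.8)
The plan is to deduce part (1) from the elementary identity relating $\chi_g$ and $\chi_{g^{-1}}$ recorded just before the Recipe, and then to obtain part (2) by logarithmically differentiating (1). Recall that for $g \in U(N)$ and $z \neq 0$ one has $\chi_g(z) = (-z)^N \det(g)^{-1}\chi_{g^{-1}}(z^{-1})$, equivalently $\chi_{g^{-1}}(z^{-1}) = (-1)^N z^{-N}\det(g)\,\chi_g(z)$. Substituting the latter into the definition $\Lambda_{g^{-1}}(z^{-1}) = \det(-g^{-1})^{1/2}(z^{-1})^{-N/2}\chi_{g^{-1}}(z^{-1})$ turns the proof of $\Lambda_{g^{-1}}(z^{-1}) = \Lambda_g(z)$ into a bookkeeping of scalar prefactors: it suffices to check the two branch identities $(z^{-1})^{-N/2} = z^{N/2}$ and $\det(-g^{-1})^{1/2} = \det(-g)^{-1/2}$, after which the remaining factors collapse via the integer-power identity $\det(-g) = (-1)^N \det(g)$ together with $(w^{1/2})^2 = w$ for the principal square root.

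The only genuine subtlety -- and the reason for the hypotheses $z \in \C \setminus \R_-$ and $\det(-g) \neq -1$ -- is the consistent choice of branches, and I expect this (modest) branch accounting to be the main obstacle; everything else is formal. For $z \in \C \setminus \R_-$ we have $\arg z \in (-\pi,\pi)$, hence $\arg(z^{-1}) = -\arg z \in (-\pi,\pi)$, so $z^{-1}$ again avoids the negative real axis and $(z^{-1})^{-N/2} = \exp\!\big(-\tfrac N2\log(z^{-1})\big) = \exp\!\big(\tfrac N2\log z\big) = z^{N/2}$ with principal logarithms. For the determinant factor, $\det(-g)$ and $\det(-g^{-1})$ are unit complex numbers with product $1$; writing $\det(-g) = e^{i\theta}$ with $\theta \in (-\pi,\pi)$ -- the excluded value $\theta = \pi$ being exactly $\det(-g) = -1$ -- the principal square roots satisfy $e^{i\theta/2}e^{-i\theta/2} = 1$, which is precisely the identity $\det(-g^{-1})^{1/2}\det(-g)^{1/2} = 1$ that makes the prefactors cancel.

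For part (2), I would differentiate the identity $\Lambda_g(z) = \Lambda_{g^{-1}}(z^{-1})$ from (1) in $z$, obtaining $\Lambda_g'(z) = -z^{-2}\Lambda_{g^{-1}}'(z^{-1})$, and divide by $\Lambda_g(z) = \Lambda_{g^{-1}}(z^{-1})$, which is nonzero since $z$ is not an eigenvalue (so $\chi_g(z) \neq 0$); multiplying by $z$ gives the claimed relation $z\Lambda_g'(z)/\Lambda_g(z) = -z^{-1}\Lambda_{g^{-1}}'(z^{-1})/\Lambda_{g^{-1}}(z^{-1})$ on $\C \setminus \R_-$. To extend it to every non-eigenvalue $z$, I would observe that $z\Lambda_g'(z)/\Lambda_g(z) = -N/2 + z\,\chi_g'(z)/\chi_g(z)$ is a rational function of $z$ with poles only at the eigenvalues of $g$, and likewise the right-hand side; since these two rational functions agree on the nonempty open set $\C \setminus \R_-$, they agree on the whole non-eigenvalue locus by the identity theorem.
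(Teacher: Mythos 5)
Your proof is correct, and it fills in exactly what the paper leaves out: the paper states this lemma without proof, dismissing it as ``a basic linear algebra exercise.'' Your argument --- the identity $\chi_g(z) = (-z)^N \det(g)^{-1} \chi_{g^{-1}}(z^{-1})$ that the paper records just before the Recipe, combined with principal-branch bookkeeping for $z^{-N/2}$ and $\det(-g)^{1/2}$ (where the hypothesis $\det(-g) \neq -1$ enters), followed by logarithmic differentiation and extension to all non-eigenvalue $z$ via the rational-function identity $z\Lambda_g'(z)/\Lambda_g(z) = -N/2 + z\chi_g'(z)/\chi_g(z)$ --- is precisely the intended verification, and the last step also supplies the correct interpretation of part (2) at points of $\R_-$ where $\Lambda_g$ itself is not defined.
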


A formula for products of logarithmic derivatives of completed characteristic polynomials is easily deduced from the logarithmic derivative theorem for classic characteristic polynomials. 

\begin{thm} [completed logarithmic derivatives] \label{4_thm_completed_log_ders} Let $r \in \R$, let $\calE$ and $\calF$ be sets of non-zero variables so that $\abs(\calE), \abs(\calF) \leq r < 1$. Then
\begin{align} \label{4_thm_completed_log_ders_eq}
\begin{split}
& \hspace{-15pt} \int_{U(N)} \prod_{\varepsilon \in \calE} \varepsilon \frac{\Lambda'_g(\varepsilon)}{\Lambda_g(\varepsilon)} \prod_{\varphi \in \calF} \varphi \frac{\Lambda'_{g^{-1}}(\varphi)}{\Lambda_{g^{-1}}(\varphi)} dg \\
={} & \sum_\lambda \left( -\frac N2 \right)^{l(\calE) + l(\calF) - 2l(\lambda)}
z_\lambda \monomial_\lambda(\calE) \monomial_\lambda(\calF)
\\
& + O_{r, l(\calE), l(\calF)} \left(r^{2N} N^{l(\calE) + l(\calF) + (l(\calE) - 1)^+ + (l(\calF) - 1)^+ + (l(\calE) + l(\calF) - 1)^+ + 2} \right)
.
\end{split}
\end{align}
\end{thm}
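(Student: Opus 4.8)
The plan is to reduce the claim to the logarithmic derivative theorem for classic characteristic polynomials, namely Theorem~\ref{4_thm_log_ders}. Logarithmically differentiating the defining relation $\Lambda_g(z) = \det(-g)^{1/2} z^{-N/2}\chi_g(z)$ shows that for every $z$ that is not an eigenvalue of $g$,
\begin{align*}
z\frac{\Lambda_g'(z)}{\Lambda_g(z)} = -\frac N2 + z\frac{\chi_g'(z)}{\chi_g(z)},
\end{align*}
and likewise for $g^{-1}$. First I would substitute this into the integrand and expand the two products: indexing by the subset $\calE' \subset \calE$ of factors in which the classic logarithmic derivative is retained (the complementary factors contributing $-N/2$), and similarly by $\calF' \subset \calF$, one obtains
\begin{multline*}
\prod_{\varepsilon\in\calE}\varepsilon\frac{\Lambda_g'(\varepsilon)}{\Lambda_g(\varepsilon)}\prod_{\varphi\in\calF}\varphi\frac{\Lambda_{g^{-1}}'(\varphi)}{\Lambda_{g^{-1}}(\varphi)} \\ = \sum_{\substack{\calE'\subset\calE \\ \calF'\subset\calF}}\left(-\frac N2\right)^{l(\calE)+l(\calF)-l(\calE')-l(\calF')}\prod_{\varepsilon\in\calE'}\varepsilon\frac{\chi_g'(\varepsilon)}{\chi_g(\varepsilon)}\prod_{\varphi\in\calF'}\varphi\frac{\chi_{g^{-1}}'(\varphi)}{\chi_{g^{-1}}(\varphi)}.
\end{multline*}

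Next I would integrate term by term. Pulling the constants $\elementary(\calE') = \prod_{\varepsilon\in\calE'}\varepsilon$ and $\elementary(\calF')$ out of each integral and invoking Theorem~\ref{4_thm_log_ders} for the sets $\calE'$, $\calF'$ (whose variables satisfy $0 < \abs(\calE'),\abs(\calF') \le r < 1$) shows that the integral for a given pair has vanishing main term unless $l(\calE') = l(\calF')$, in which case its main term is $\elementary(\calE')\elementary(\calF')\sum_{\lambda: l(\lambda)=l(\calE')} z_\lambda \monomial_{\lambda-\langle 1^{l(\calE')}\rangle}(\calE')\monomial_{\lambda-\langle 1^{l(\calE')}\rangle}(\calF')$, up to the error recorded in that theorem.

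The crux is then two elementary identities on monomial symmetric polynomials. First, for $l(\lambda) = l(\calE')$ one has $\elementary(\calE')\monomial_{\lambda-\langle 1^{l(\calE')}\rangle}(\calE') = \monomial_\lambda(\calE')$, since multiplying each monomial of $\monomial_{\lambda-\langle 1^{l(\calE')}\rangle}(\calE')$ by $\prod_{\varepsilon\in\calE'}\varepsilon$ raises every exponent by one. Second, for a partition $\lambda$ of length $j$ one has $\sum_{\calE'\subset\calE:\, l(\calE')=j}\monomial_\lambda(\calE') = \monomial_\lambda(\calE)$: every monomial of $\monomial_\lambda(\calE)$ uses exactly $j$ of the variables with nonzero exponent, and grouping the monomials according to this set of $j$ active variables yields the decomposition (the grouping is a bijection, so there is no over-counting even when $\lambda$ has repeated parts). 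Applying the first identity to absorb the $\elementary$ factors and then the second to both the $\calE$- and the $\calF$-sums collapses the double sum over subsets, turning the main term into $\sum_\lambda (-N/2)^{l(\calE)+l(\calF)-2l(\lambda)} z_\lambda \monomial_\lambda(\calE)\monomial_\lambda(\calF)$, exactly as stated.

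It remains to bound the error, which is the sum over the (constantly many) pairs $(\calE',\calF')$ of $(N/2)^{l(\calE)+l(\calF)-l(\calE')-l(\calF')}$ times the error furnished by Theorem~\ref{4_thm_log_ders} for $(\calE',\calF')$; the prefactors $\elementary(\calE')$, $\elementary(\calF')$ are bounded by a constant and the $r^{2N}$ factor is untouched. Writing $a = l(\calE')$, $b = l(\calF')$, $A = l(\calE)$, $B = l(\calF)$, the exponent of $N$ in the $(\calE',\calF')$-term is $(A-a)+(B-b)+(a-1)^+ + (b-1)^+ + (a+b-1)^+ + 2$, and I would check that it never exceeds the claimed exponent $A+B+(A-1)^+ + (B-1)^+ + (A+B-1)^+ + 2$ by noting that their difference equals $a+b+[(A-1)^+ - (a-1)^+] + [(B-1)^+ - (b-1)^+] + [(A+B-1)^+ - (a+b-1)^+]$, a sum of manifestly nonnegative terms because $a \le A$ and $b \le B$. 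I expect this exponent bookkeeping to be the only mildly delicate point; the rest is a direct expansion combined with the two combinatorial identities, so no substantive obstacle remains.
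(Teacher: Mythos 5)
Your proposal is correct and follows essentially the same route as the paper's own proof: expand each completed logarithmic derivative as $-N/2 + z\chi_g'(z)/\chi_g(z)$, sum over pairs of subsequences $\calE'\subset\calE$, $\calF'\subset\calF$, apply Theorem~\ref{4_thm_log_ders} to each pair, and recombine via the identities $\elementary(\calE')\monomial_{\lambda-\langle 1^{l(\calE')}\rangle}(\calE')=\monomial_\lambda(\calE')$ and $\sum_{\calE'}\monomial_\lambda(\calE')=\monomial_\lambda(\calE)$ (the paper compresses this last step into ``by the definition of the monomial symmetric polynomials''). Your explicit verification of the exponent bookkeeping for the error term is also sound and makes precise what the paper leaves implicit.
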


The two logarithmic derivative theorems presented in this chapter are part of the reasons why we consider it more natural to work with completed characteristic polynomials in the context of viewing random matrix theory as a model for number theory: the main term in Theorem~\ref{4_thm_completed_log_ders} is a sum that ranges over all partitions, while the main term in Theorem~\ref{4_thm_log_ders} is a sum that ranges over all partitions of a fixed length, which we consider an ``unnatural'' restriction.

\begin{proof} Notice that $\{g \in U(N): \det(-g) = -1\}$ is a null set with respect to Haar measure on $U(N)$. Hence, the fact that $\Lambda_g$ is not defined on this set is of no concern.

We reformulate the left-hand side in \eqref{4_thm_completed_log_ders_eq} such that we can apply Theorem~\ref{4_thm_log_ders}:
\begin{align*}
\LHS ={} & \int_{U(N)} \prod_{\varepsilon \in \calE} \left( -\frac{N}{2} + \varepsilon \frac{\chi'_g(\varepsilon)}{\chi_g(\varepsilon)} \right) \prod_{\varphi \in \calF} \left( -\frac{N}{2} + \varphi \frac{\chi'_{g^{-1}}(\varphi)}{\chi_{g^{-1}}(\varphi)} \right) dg \displaybreak[2] \\ 
={} & \sum_{\substack{\calE' \subset \calE \\ \calF' \subset \calF}} \left( -\frac N2 \right)^{l(\calE) - l\left(\calE'\right) + l(\calF) - l\left(\calF'\right)} \left( \prod_{\varepsilon \in \calE'} \varepsilon \right) \left( \prod_{\varphi \in \calF'} \varphi \right) \\
& \times \int_{U(N)} \prod_{\varepsilon \in \calE'} \frac{\chi_g'(\varepsilon)}{\chi_g(\varepsilon)} \prod_{\varphi \in \calF'} \frac{\chi_{g^{-1}}'(\varphi)}{\chi_{g^{-1}}(\varphi)} dg
.
\intertext{We remark that this equality holds thanks to the convention fixed in Section~\ref{4_sec_sequences_and_partitions}, which ensures that every sequence of length $n$ has exactly $2^n$ subsequences. Theorem~\ref{4_thm_log_ders} allows us to compute the integral:}
\LHS ={} & \sum_{\substack{\calE' \subset \calE \\ \calF' \subset \calF \\ l\left(\calE'\right) = l\left(\calF'\right)}} \left( -\frac N2 \right)^{l(\calE) + l(\calF) - 2l\left(\calE'\right)} \left( \prod_{\varepsilon \in \calE'} \varepsilon \right) \left( \prod_{\varphi \in \calF'} \varphi \right) \\
& \times \sum_{\substack{\lambda: \\ l(\lambda) = l\left(\calE'\right)}} z_\lambda \monomial_{\lambda - \left\langle 1^{l\left(\calE'\right)} \right\rangle}\left(\calE'\right) \monomial_{\lambda - \left\langle 1^{l\left(\calE'\right)} \right\rangle}\left(\calF'\right)
\\
& + O_{r, l(\calE), l(\calF)} \left(r^{2N} N^{l(\calE) + l(\calF) + (l(\calE) - 1)^+ + (l(\calF) - 1)^+ + (l(\calE) + l(\calF) - 1)^+ + 2} \right) \displaybreak[2]\\
={} & \sum_\lambda \left( -\frac N2 \right)^{l(\calE) + l(\calF) - 2l(\lambda)} \sum_{\substack{\calE' \subset \calE \\ \calF' \subset \calF \\ l\left(\calE'\right) = l(\lambda) = l\left(\calF'\right)}} z_\lambda \monomial_\lambda\left(\calE'\right) \monomial_\lambda\left(\calF'\right)
\\
& + O_{r, l(\calE), l(\calF)} \left(r^{2N} N^{l(\calE) + l(\calF) + (l(\calE) - 1)^+ + (l(\calF) - 1)^+ + (l(\calE) + l(\calF) - 1)^+ + 2} \right)
.
\end{align*}
By the definition of the monomial symmetric polynomials, the main term simplifies to the desired expression.
\end{proof}

A formula for the average of products of logarithmic derivatives of completed characteristic polynomials over the unitary group \emph{which holds inside the unit circle} is the only tool we need to derive an explicit formula for eigenvalues of unitary matrices.

\begin{thm} [explicit formula] \label{4_thm_explicit_formula} Fix $r \in \R$ with $0 < r < 1$. Let $A(r)$ denote the closed annulus (about the origin) with inner radius $r$ and outer radius $r^{-1}$, and $D \left(r^{-1}\right)$ the closed disc (about the origin) of radius $r^{-1}$. Let $h$ be a meromorphic function on $D\left(r^{-1}\right)$ which is holomorphic on $A(r)$. Let $f$ be a symmetric function in $n$ variables such that $z \mapsto f(z, z_2, \dots, z_n)$ is meromorphic on $D\left(r^{-1}\right)$ and holomorphic on $A(r)$. If $\{\rho_1, \dots, \rho_N\}$ are the eigenvalues of $g \in U(N)$, then
\begin{align} 
\begin{split} \label{4_thm_explicit_formula_eq}
& \hspace{-10pt} \int_{U(N)} \sum_{1 \leq j_1, \dots, j_n \leq N} h(\rho_{j_1}) \cdots h(\rho_{j_n}) f(\rho_{j_1}, \dots, \rho_{j_n}) dg  \\
={} & \sum_\lambda \left( \frac N2 \right)^{n - 2l(\lambda)} \frac{z_\lambda}{(2\pi)^n} \sum_{k = 0}^n \binom{n}{k} \\
& \times \int_{[0,2\pi]^n} \Bigg( \left[ \prod_{j = 1}^k h \left(re^{-it_j}\right) \right] \! \left[ \prod_{j = k + 1}^n h\left(\frac{e^{it_j}}{r} \right) \right] \\
& \hspace{25pt} \times f \left( re^{-it_1}, \dots, re^{-it_k}, \frac{e^{it_{k + 1}}}{r} , \dots, \frac{e^{it_n}}{r} \right) \\
& \hspace{25pt} \times \monomial_\lambda\left(re^{-it_1}, \dots, re^{-it_k}\right) \monomial_\lambda\left(re^{-it_{k + 1}}, \dots, re^{-it_n}\right) \! \Bigg) dt_1 \dots dt_n 
\\
& + O_{r, n, h, f} \left(r^{2N} N^{3n + 2} \right)
.
\end{split}
\end{align}
In the context of this theorem we call a function $f$ symmetric if it is invariant under the permutation of its variables, which means that $f$ need not be an element of the ring of symmetric functions.
\end{thm}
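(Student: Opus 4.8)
The plan is to mirror Rudnick and Sarnak's derivation of the explicit formula: write the eigenvalue sum as an $n$-fold contour integral of the logarithmic derivative of the \emph{completed} characteristic polynomial, fold the outer contour onto the inner one by means of the functional equation of Lemma~\ref{4_lem_funct_eq_char_pol}, and then recognize the resulting $U(N)$-average as an instance of Theorem~\ref{4_thm_completed_log_ders}.

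First I would fix $g \in U(N)$ with $\det(-g) \neq -1$ (a set of full Haar measure, as noted in the proof of Theorem~\ref{4_thm_completed_log_ders}) and record that, inside the annulus $A(r)$, the zeros of $\chi_g$ -- and hence of $\Lambda_g$ -- are exactly the eigenvalues $\rho_1, \dots, \rho_N$ on the unit circle. Since $\Lambda_g$ differs from $\chi_g$ only by the factor $\det(-g)^{1/2} z^{-N/2}$, which is holomorphic and non-vanishing on $A(r)$ because $0 \notin A(r)$, the logarithmic derivative $\Lambda_g'/\Lambda_g$ has a simple pole of residue equal to the multiplicity at each eigenvalue and no other singularity in $A(r)$. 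As $h$ and $f$ are holomorphic on $A(r)$ in each variable, the residue theorem applied to the annulus gives, for any $w$ holomorphic on $A(r)$,
$$\sum_{j=1}^N w(\rho_j) = \frac{1}{2\pi i}\oint_{\partial A(r)} w(z)\, \frac{\Lambda_g'(z)}{\Lambda_g(z)}\, dz .$$
Iterating this identity over the $n$ variables (with $w$ assembled from $h$ and the appropriate slice of $f$) expresses the inner sum of \eqref{4_thm_explicit_formula_eq} as an $n$-fold contour integral over $\partial A(r)^n$ of $\prod_i h(z_i)\, f(z_1,\dots,z_n)\prod_i \Lambda_g'(z_i)/\Lambda_g(z_i)$.

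Next I would split each boundary $\partial A(r)$ into its inner circle (clockwise, parametrized by $z = r e^{-it}$) and outer circle (counterclockwise, parametrized by $z = r^{-1} e^{it}$), writing $\tfrac{dz}{z} = \mp i\,dt$. The inner circle then contributes $-\tfrac{1}{2\pi}\int_0^{2\pi} w(re^{-it})\,[z\Lambda_g'/\Lambda_g]_{z = re^{-it}}\,dt$, while the outer one contributes $+\tfrac{1}{2\pi}\int_0^{2\pi} w(r^{-1}e^{it})\,[z\Lambda_g'/\Lambda_g]_{z = r^{-1}e^{it}}\,dt$. On the outer circle I would apply the functional equation $z\,\Lambda_g'(z)/\Lambda_g(z) = -z^{-1}\Lambda_{g^{-1}}'(z^{-1})/\Lambda_{g^{-1}}(z^{-1})$, which at $z = r^{-1}e^{it}$ turns the outer term into $-\tfrac{1}{2\pi}\int_0^{2\pi} w(r^{-1}e^{it})\,[w_0\,\Lambda_{g^{-1}}'/\Lambda_{g^{-1}}]_{w_0 = re^{-it}}\,dt$; both contributions thus carry the factor $-\tfrac1{2\pi}\int dt$ and a logarithmic-derivative factor evaluated at the modulus-$r$ point $re^{-it}$, the inner one of $\Lambda_g$ and the outer one of $\Lambda_{g^{-1}}$. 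Expanding the product over the $n$ variables gives a sum over the $2^n$ ways of routing each variable to the inner or the outer circle, which by the symmetry of $f$ collapses to $\sum_{k=0}^n \binom nk$, $k$ counting the inner ($\Lambda_g$) variables. For the representative assignment with inner variables $1,\dots,k$ the $h$- and $f$-arguments are exactly $re^{-it_1},\dots,re^{-it_k},r^{-1}e^{it_{k+1}},\dots,r^{-1}e^{it_n}$ as in \eqref{4_thm_explicit_formula_eq}. I would then interchange $\int_{U(N)}$ with the $t$-integration (justified by Fubini, since $\Lambda_g'/\Lambda_g$ is bounded on the circles $|z|=r,r^{-1}$ uniformly in $g$, the eigenvalues being bounded away from both) and read off the inner $U(N)$-integral as the left-hand side of Theorem~\ref{4_thm_completed_log_ders} with $\calE = \{re^{-it_i}:i\le k\}$ and $\calF = \{re^{-it_i}:i>k\}$, both of modulus $r<1$. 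Substituting its value $\sum_\lambda (-N/2)^{n-2l(\lambda)} z_\lambda \monomial_\lambda(\calE)\monomial_\lambda(\calF) + \error$ and combining the $(-1)^n$ from the $n$ factors $-\tfrac1{2\pi}$ with $(-N/2)^{n-2l(\lambda)} = (-1)^n (N/2)^{n-2l(\lambda)}$ reproduces precisely the main term of \eqref{4_thm_explicit_formula_eq}, with $\monomial_\lambda(\calE)$ and $\monomial_\lambda(\calF)$ supplying the two monomial factors. Integrating the error of Theorem~\ref{4_thm_completed_log_ders} over $[0,2\pi]^n$ against the bounded functions $h,f$ and summing over the $O_n(1)$ choices of $k$ yields the claimed $O_{r,n,h,f}(r^{2N}N^{3n+2})$ (the polynomial exponent being inherited, with room to spare, from that theorem's error).

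I expect the principal difficulty to be organizational rather than deep: keeping track of orientations and of the sign in the functional equation so that the inner and outer contributions both land at arguments of modulus $r$ and assemble into the $\calE/\calF$ split of Theorem~\ref{4_thm_completed_log_ders}, and rigorously justifying the interchanges of summation and integration (the $U(N)$/contour Fubini step and the convergence of the $\lambda$-sum, which is controlled by the geometric decay of $\monomial_\lambda$ at modulus $r<1$ together with $l(\lambda)\le\min\{k,n-k\}$). The genuinely essential point -- and the reason the completed polynomial is indispensable -- is that only $\Lambda_g$ obeys the clean functional equation of Lemma~\ref{4_lem_funct_eq_char_pol}, which is exactly what lets the outer circle be folded inside the unit circle and thereby reduces the whole computation to the ``inside the unit circle'' average furnished by Theorem~\ref{4_thm_completed_log_ders}.
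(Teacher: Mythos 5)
Your proposal is correct and follows essentially the same route as the paper's own proof: the same contour integral over $\delta(r) + \delta\left(r^{-1}\right)$ with the residue theorem, the same folding of the outer circle onto the inner one via Lemma~\ref{4_lem_funct_eq_char_pol}, the same collapse of the $2^n$ routings to $\sum_k \binom{n}{k}$ by symmetry of $f$, and the same invocation of Theorem~\ref{4_thm_completed_log_ders} after interchanging the $U(N)$- and $t$-integrations, with matching sign bookkeeping. Your added remarks on the Fubini step and on the convergence of the $\lambda$-sum are points the paper passes over silently, so they only strengthen the argument.
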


\begin{proof}
The function $\Lambda_g'(z)/\Lambda_g(z)$ is meromorphic on the entire complex plane with simple poles at $\{0, \rho_1, \dots, \rho_N\}$; its residue at $\rho_i$ is the multiplicity of $\rho_i$. We consider the following path integral along the border of $A(r)$, \textit{i.e.}\ along $\delta = \delta(r) + \delta\left(r^{-1}\right)$ where $\delta(r): [0, 2\pi] \to \C; t \mapsto re^{-it}$ and $\delta\left(r^{-1}\right): [0, 2\pi] \to \C; t \mapsto r^{-1}e^{it}$:
\begin{align*} 
\text{Eig}(g) \defeq \frac{1}{(2\pi i)^n} \int_{\delta} \dots \int_{\delta} \prod_{i = 1}^n \frac{\Lambda_g'(z_i)}{\Lambda_g(z_i)} h(z_i) f(z_1, \dots, z_n)dz_1 \dots dz_n 
.
\end{align*}
Given that the interior of $\delta$ does not contain the origin, repeated application of the residue theorem allows us to infer that the above expression is equal to the integrand on the left-hand side in \eqref{4_thm_explicit_formula_eq}.

In a next step, we show that the integral of $\text{Eig}(g)$ over the unitary group is also equal to the right-hand side in \eqref{4_thm_explicit_formula_eq}. Recalling that each integral along the path $\delta$ is the sum of the integrals along $\delta(r)$ and $\delta \left( r^{-1}\right)$, we multiply out (exploiting the fact that $f$ is symmetric), and then apply the functional equation for the completed characteristic polynomial (\textit{i.e.}\ Lemma~\ref{4_lem_funct_eq_char_pol}) to the logarithmic derivatives that are integrated along $\delta\left(r^{-1}\right)$:
\begin{align*}
\text{Eig}(g) ={} & \sum_{\substack{\calE, \calF \subset [n]: \\ \calE \cup \calF \sorteq [n]}} \frac{1}{(2\pi i)^n} \int_{\delta(r)}^{(\calE)} \int_{\delta\left(r^{-1}\right)}^{(\calF)} \\ 
& \hspace{25pt} \times \left( \prod_{\varepsilon \in \calE} z_\varepsilon \frac{\Lambda_g'(z_\varepsilon)}{\Lambda_g(z_\varepsilon)} \frac{h(z_\varepsilon)}{z_\varepsilon} \right) \left( \prod_{\varphi \in \calF} \left( - z_\varphi^{-1} \right) \frac{\Lambda_{g^{-1}}' \left( z_\varphi^{-1} \right)}{\Lambda_{g^{-1}} \left( z_\varphi^{-1} \right)} \frac{h(z_\varphi)}{z_\varphi} \right) \\
& \hspace{25pt} \times f\left(z_\calE \cup z_\calF\right)dz_\calE dz_\calF
.
\end{align*}
Here the superscripts of the integrals indicate which variables are integrated along $\delta(r)$, and which along $\delta\left(r^{-1}\right)$.
Using Theorem~\ref{4_thm_completed_log_ders} to integrate this expression over $U(N)$ gives
\begin{align*}
\int_{U(N)} \text{Eig}(g) dg ={} & \sum_{\substack{\calE, \calF \subset [n]: \\ \calE \cup \calF \sorteq [n]}} \frac{(-1)^{l(\calF)}}{(2\pi i)^n} \int_{\delta(r)}^{(\calE)} \int_{\delta\left(r^{-1}\right)}^{(\calF)} \prod_{\varepsilon \in \calE} \frac{h(z_\varepsilon)}{z_\varepsilon} \prod_{\varphi \in \calF} \frac{h(z_\varphi)}{z_\varphi} f\left(z_\calE \cup z_\calF \right) \\
& \hspace{25pt} \times \sum_\lambda \left( -\frac N2 \right)^{l(\calE) + l(\calF) - 2l(\lambda)}
z_\lambda \monomial_\lambda\left(z_\calE\right) \monomial_\lambda\left(z_\calF^{-1}\right) dz_\calE dz_\calF
\\
& + O_{r, n, h, f} \left(r^{2N} N^{3n + 2} \right) 
.
\intertext{Notice that we have exchanged the order of integration, which is permissible since we are only integrating continuous functions over compact spaces with respect to finite measures. Further notice that the terms only depend on $l(\calE)$, and not on the subsequence itself. Hence,}
\int_{U(N)} \text{Eig}(g) dg ={} & \sum_\lambda \left( -\frac N2 \right)^{n - 2l(\lambda)} \frac{z_\lambda}{(2\pi i)^n} \sum_{k = 0}^n (-1)^{n - k} \binom{n}{k} \\
& \times \int_{\delta(r)}^{(1, \dots, k)} \int_{\delta\left(r^{-1}\right)}^{(k + 1, \dots, n)} \left[ \prod_{j = 1}^n \frac{h(z_j)}{z_j} \right] f(z_1, \dots, z_n) \\
& \hspace{25pt} \times \monomial_\lambda(z_1, \dots, z_k) \monomial_\lambda \left(z_{k + 1}^{-1}, \dots, z_n^{-1}\right) dz_1 \dots dz_n
\\
& + O_{r, n, h, f} \left(r^{2N} N^{3n + 2} \right)
.
\end{align*}
Writing out the path integrals gives the desired formula.
\end{proof}

In our opinion, the main interest of our explicit formula for eigenvalues of a random unitary matrix lies in the fact that its derivation has the same basic structure as the derivation of the explicit formula for zeros of $L$-functions in \cite{rudnick1996}. This similarity in structure might give a deeper insight into the conjectured connection between $L$-functions and characteristic polynomials from the unitary group. Rudnick and Sarnak's explicit formula for zeros of $L$-functions is an application of the functional equation and the Euler product. Hence, our proof of the explicit formula for eigenvalues is based on two analogous properties of characteristic polynomials.
\begin{itemize}
\item The functional equation for $L$-functions used in \cite{rudnick1996} encodes a symmetry between the value of the completed $L$-function attached to some irreducible cuspidal automorphic representation $\pi$ of $GL_m$ over $\Q$ at the point $s$ and the value of the completed $L$-function associated to the contragredient of $\pi$ at the point $1 - s$. According to \cite{CFKRS05}, the transformation $s \mapsto 1 - s$ corresponds to the transformation $z \mapsto z^{-1}$. Hence, it is reasonable that the equality $$\Lambda_g(z) = \Lambda_{g^{-1}} \left(z^{-1} \right)$$ plays the role of the functional equation in our derivation of the explicit formula for eigenvalues, where $g \in U(N)$ and the inverse $g^{-1}$ is analogous to the contragredient $\tilde{\pi}$.
\item If we view the Euler product as a connector between $L$-functions and prime numbers, there is no hope of finding a random matrix theory analogue. However, if we view the Euler product as an explicit expression for the logarithmic derivative $\Lambda'(s, \pi)/\Lambda(s, \pi)$ that holds sufficiently far to the right of the critical line, then Theorem~\ref{4_thm_completed_log_ders} is a possible analogue. Indeed, it provides an explicit expression for (the main term of) the average of logarithmic derivatives of completed characteristic polynomials that holds inside the unit circle. 

As the unit circle is the ``critical line'' for the completed characteristic polynomial $\Lambda_g(z)$ \cite[p.~39]{CFKRS05}, the unit disc (\textit{i.e.}\ the inside of the unit circle) should correspond to either the half-plane to the left or the half-plane to the right of the critical line for completed $L$-functions. The substitute for the Euler product proposed above suggests that the unit disc is associated to the half-plane on the right-hand side. Another argument in support of this correspondence (which is also mentioned in \cite{CFKRS05}) is that under the assumption of the Riemann hypothesis, the zeros of $\zeta'(s)$ all lie to the right of the critical line (according to \cite{MontgomeryLevinson}), while the zeros of the derivative of any characteristic polynomial $\chi_g(z)$ with $g \in U(N)$ lie inside the unit circle (according to the Gauss-Lucas Theorem). 
\end{itemize}
The proofs of the explicit formulae (for zeros of $L$-functions and for eigenvalues) are both structured as follows: Consider the sum on the left-hand side of the equality to be proved: 
\begin{align*}
\sum_{\rho_\pi} h(\rho_\pi) - \delta(\pi) \left[ h(0) + h(1) \right]
\end{align*}
where $\rho_\pi$ is over the nontrivial zeros of $L(s, \pi)$ (and the second term vanishes unless $\pi$ corresponds to the $\zeta$-function), or
\begin{align*}
\int_{U(N)} \sum_{1 \leq j_1, \dots, j_n \leq N} h(\rho_{j_1}) \cdots h(\rho_{j_n}) f(\rho_{j_1}, \dots, \rho_{j_n}) dg
\end{align*}
where $\{\rho_1, \dots, \rho_N\}$ is the multiset of eigenvalues of $g \in U(N)$.
Use Cauchy's argument principle to express this sum over zeros as a contour integral. This results in an integral of the following abstract form:
\begin{align*}
\frac{1}{2\pi \imaginary} \int_{\gamma_1} \frac{\Lambda'(s)}{\Lambda(s)} h(s) ds - \frac{1}{2\pi \imaginary} \int_{\gamma_2} \frac{\Lambda'(s)}{\Lambda(s)} h(s) ds
\end{align*}
where $\Lambda$ stands for a completed $L$-function \emph{or} a random completed characteristic polynomial, and the contours $\gamma_1$ and $\gamma_2$ are vertical lines that are located to the right and to the left the critical line, respectively, \emph{or} the contours $\gamma_1$ and $\gamma_2$ are circles about the origin that are located inside and outside the unit circle, respectively. In a next step, apply the functional equation to the integrand corresponding to the contour $\gamma_2$, which allows us to situate both contours to the right of the critical line/inside the unit circle. Now, the explicit formula is a consequence of the Euler product/Theorem~\ref{4_thm_completed_log_ders}.

The underlying structure of these derivations of explicit formulae might be the same, but the resulting formulas look quite different. The principal reason for this difference is that we have substituted the Euler product by an equality that does not carry any arithmetic information. Another obvious difference is that our explicit formula for eigenvalues provides an asymptotic expression for the sums over all $n$-tuples of eigenvalues (for $n \geq 1$), whereas Rudnick and Sarnak's explicit formula for zeros of $L$-functions provides an exact expression for the sum over all $1$-tuples of zeros. It would be very interesting to investigate explicit formulae for sums of $n$-tuples of zeros of $L$-functions, whose proof follows the same structure. Such a proof would be based on an arithmetic expression for
\begin{align*}
\prod_{\varepsilon \in \calE} \varepsilon \frac{\Lambda'(\varepsilon, \pi)}{\Lambda (\varepsilon, \pi)} \prod_{\varphi \in \calF} \varphi \frac{\Lambda'(\varphi, \tilde{\pi})}{\Lambda(\varphi, \tilde{\pi})}
\end{align*}
where $\calE$ and $\calF$ are sets of complex numbers that lie sufficiently far to the right of the critical line. This arithmetic expression might even display the same combinatorial structure as our combinatorial formula for the average of products of logarithmic derivatives of completed characteristic polynomials (stated in Theorem~\ref{4_thm_completed_log_ders}).

\bibliographystyle{alpha}
\bibliography{bib_thesis}

\end{document}